\numberwithin{equation}{section}
\newcommand{\N}{\mathbb N}
\newcommand{\R}{\mathbb R}
\newcommand{\vs}{\vskip.075in}
\def\XXint#1#2#3{{\setbox0=\hbox{$#1{#2#3}{\int}$}
\vcenter{\hbox{$#2#3$}}\kern-.5\wd0}}
\numberwithin{equation}{section}
\newtheorem{thm}{Theorem}[section]
\newtheorem{lem}[thm]{Lemma}
\newtheorem{cor}[thm]{Corollary}
\newtheorem{prop}[thm]{Proposition}
\theoremstyle{definition}
\newtheorem{defn}[thm]{Definition}
\newtheorem{rmk}[thm]{Remark}
\def\smallnegint{\mathop{\int\mkern-13mu
        \raise.5ex\hbox{${\scriptscriptstyle\diagup}$}}\nolimits}
\def\ep{\varepsilon}
\def\ssetminus{\,\raise.4ex\hbox{$\scriptstyle\setminus$}\,}
\newcommand{\be}{\begin{equation}}
\newcommand{\ee}{\end{equation}}
\renewcommand{\bar}{\overline}
\renewcommand{\tilde}{\widetilde}
\renewcommand{\hat}{\widehat}
\begin{document}

\thanks{P.C. was partially supported by P.S's Air Force Office for Scientific Research grant FA9550-18-1-0494 and by the Agence Nationale de la Recherche (ANR), project ANR-22-CE40-0010 COSS. P.E.S. was partially  supported by the National Science Foundation grant DMS-1900599, the Office for Naval Research grant N000141712095 and the Air Force Office for Scientific Research grant FA9550-18-1-0494.}

\title{An optimal control  problem of traffic flow on a junction}

\author[P. Cardaliaguet and P. E. Souganidis]{Pierre Cardaliaguet and Panagiotis E. Souganidis} 
\address{Universit\'e Paris-Dauphine, PSL Research University, Ceremade, 
Place du Mar\'echal de Lattre de Tassigny, 75775 Paris cedex 16 - France}
\email{cardaliaguet@ceremade.dauphine.fr }
\address{Department of Mathematics, University of Chicago, Chicago, Illinois 60637, USA}
\email{souganidis@uchicago.edu}

\dedicatory{Version: \today}

\begin{abstract} We investigate how to control optimally a traffic flow  through a junction on the line by acting only on speed reduction or traffic light at the junction. We show the existence of an optimal control and, under structure assumptions, provide optimality conditions. We use this analysis to investigate thoroughly  the maximization of the flux on a space-time subset and show  the existence of an optimal control which is bang-bang. 
\end{abstract}

\maketitle  

\tableofcontents

\section{Introduction}

In this note,  we investigate how to control optimally a traffic flow  through a junction on the line by acting only on speed reduction or traffic light at the junction. 
\vs
In our setting, the evolution of the density $\rho^A$ of the vehicles  is given by a conservation law of the form 
\be\label{eq.CL}
\left\{\begin{array}{ll}
\rho^A_t +(f^L(\rho^A))_x= 0 & \;  \text{in}\; (-\infty,0)\times (0,T),\\[1mm]
\rho^A_t +(f^R(\rho^A))_x= 0 & \;  \text{in}\; (0, \infty)\times (0,T),\\[1mm]
\rho^A_t(0,t) \in \mathcal G(A)(t)  &\; \text{a.e. in} \; (0,T), \\[1mm]
\rho^A(x,0)= \rho_0(x) & \;\text{in} \; \R.
\end{array}\right.
\ee
Here $f^L:[0,R^L]\to \R$ and $f^R:[0, R^R]\to \R$ are two strictly concave flux functions vanishing respectively at $0$ and $R^L$ and at $0$ and $R^R$. The measurable initial condition $\rho_0:\R\to \R$ is such that $\rho_0\in [0,R^L]$ on $(-\infty,0)$ and $\rho_0\in [0,R^R]$ on $(0,\infty)$. The time-dependent control is the flux limiter $A=A(t)$ which acts on the traffic through the junction condition, that is 
$$
\rho^A_t(0,t)=(\rho^A_t(0^-,t),\rho^A_t(0^+,t)) \in \mathcal G(A)(t) \qquad \text{a.e. in} \ (0,T),
$$
where $\rho^A_t(0^-,t)$ and $\rho^A_t(0^+,t)$ are understood in the sense of traces, which exist since $\rho^A$ solves a scalar conservation law with a strictly concave flux. Finally,  the time dependent germ $\mathcal G(A)(t)$ is given by 
$$
\mathcal G(A)(t)= \left\{ e=(e^L,e^R)\in Q, \; f^L(e^L)=f^R(e^R) = \min\{-A(t),f^{L,+}(e^L), f^{R,-}(e^R)\} \right\}, 
$$
where $Q= [0,R^L]\times [0,R^R]$,  $A_0 =- \min\{\max f^L, \max f^R\}$ and $f^+$ and $f^-$ denote respectively the  increasing and decreasing parts of a given  map $f$. 
We note that is  known, see, for example, \cite{andreianov}, that, for piecewise constant controls $A:[0,T]\to [A_0, 0]$, there exists a unique  entropy solution $\rho^A$ to \eqref{eq.CL}.

\vs

The general optimal control problem  is the  minimization  over all piecewise constant maps $A:[0,T]\to \R$ of a quantity of the form 
\be\label{basicpb}
 \int_0^T \int_\R \psi(x,t,\rho^A(x,t),A(t))dt
\ee
where $\psi:\R\times [0,T]\times \R_+\times [A_0,0]\to \R$ is a continuous function with a compact support  and  convex in the last variable. 
\vs
Models  about  the optimal control of traffic flow have attracted  considerable  attention recently; see, for example, the  monograph \cite{BDmGGP22} and to the survey papers \cite{BCGH, Pi23}.  
In fact, several optimal control problems of the type discussed above have been looked at  in the literature, some 
of which having  for more general junction conditions; 
 see, for instance, \cite{BDmGGP22, BH11, BH12, BH13, BrNg15, CDPR07, CGR11, GHKL05, HK03, TCD21}. In particular, \cite{GHKL05} derives optimality conditions assuming the smoothness of the optimal solution. In \cite{CDPR07}, the controls are assumed to be  constant in time. 
Reference  \cite{CGR11} discusses how to minimize the queue length, the total variation of traffic speed (see also \cite{CG04}), and general functions of the density as in \eqref{basicpb} for problems on the half-line.  As it is explained in \cite{CGR11}, this later case includes the ``travel time'', that is, the average time needed to reach a given position. The main result of \cite{CGR11} is the existence of optimal solutions when controlling the flux at the entry of the road. In \cite{ACCG18} (see also \cite{HK03}), the authors study problems on a junction. Their aim is to  optimize integrals in time of a function of the flux at the node, namely, expressions of the form $\psi( f^L(\rho^A(0^-,t)),f^R(\rho^A(0^+,t))$
and prove the existence of an optimal solution under  a total variation constraint on the control. 
\vs

Our objective  is to show that the problem consisting in minimizing  \eqref{basicpb} has a (relaxed) minimum (Theorem \ref{thm.existence}), to give optimality conditions in the case where $\psi$ is a linear function of $\rho^A$ or of $f^{L/R}(\rho^A)$ (Theorem \ref{lem.OC}) and to show that, under suitable structure conditions on $\psi$ and the initial condition (including the case of maximizing the flux on a space-time box), the relaxed control  is piecewise constant and takes values in $\{0,A_0\}$ (Theorem \ref{thm.mainexTOT}). Note that this latter result is meaningful in the context of  traffic flows. It shows that it is more efficient to stop the traffic by using traffic lights  with only a finite number of switches than to slow it down by reducing the speed. We underline however that the optimal solution depends  on the choice of both the initial condition and the cost function.  
\vs

The existence of a solution is a new result since it does not require any BV bound on the control in contrast to other results in the literature \cite{ACCG18, CDPR07}. The price to pay is that the optimal control is then a priori in $L^\infty$  and not BV. As a result,  it is necessary  to interpret the solution in a suitable way (see below). Note, however, that, under the assumptions of Theorem \ref{thm.mainexTOT}, the relaxed optimal control turns out to be piecewise constant, and thus a ``classical'' control. With the exception of \cite{GHKL05}, where the optimal solution,  that is, the pair $(A,\rho^A)$) is supposed to be smooth, and \cite{CDPR07}, in  which controls are assumed to be constant in time, there is no optimality conditions  in the literature. Here we obtain such  conditions, at least under specific structure assumptions. Finally,  we derive from these conditions that the optimal control is bang-bang for the problem of maximizing the flux in a space-time box. 
\vs

As the direct analysis of the minimization problem in \eqref{basicpb} is delicate, we rely on the representation of the solution $\rho^A$ in terms of Hamilton-Jacobi equations. This approach is classical in the literature; see, for example,  \cite{IMZ} and the references therein or \cite[section 3.3.1]{BDmGGP22}  for a comprehensive account. It amounts to replacing the density $\rho^A$  by the Moskowitz function $u^A=u^A(x,t)$, which, roughly speaking, counts the number of vehicles passed by location $x$ before time $t$. Then $\rho^A=-u^A_x$ and  $u^A$ is the unique viscosity solution of the Hamilton-Jacobi equation on the junction problem
\be\label{main.HJ}
\left\{\begin{array}{ll}
u^A_t(x,t) +H^L(u^A_x(x,t))= 0  \  \; \  \text{in}\; \ \ (-\infty,0)\times (0,T),\\
u^A_t(x,t) +H^R(u^A_x(x,t))= 0 \; \ \ \text{in}\;  \ \ (0, \infty)\times (0,T),\\
u^A_t(0,t) + \max\{ A(t), H^{L,+}(u^A_x(0^-,t)), H^{R,-}(u^A_x(0^+,t))\} =0 \ \ \; \text{in} \; \ \ (0,T), \\
u^A(x,0)= u_0(x) \ \  \;\text{in} \; \ \ R,
\end{array}\right.
\ee
where $H^{L/R}(v)=-f^{L/R}(-v)$. Note that $H^{L/R}:(-R^{L/R}, 0)\to \R$ are smooth uniformly convex Hamiltonians with $H^{L/R}(-R^{L/R})=H^{L/R}(0)=0$.  The initial condition $u_0:\R\to\R$ is a Lipschitz map such that $u_{0,x} =-\rho_0$, $(u_0)_x\in (-R^L,0)$ on $(-\infty,0)$ and  $u_{0,x}\in (-R^R,0)$ on $(0,\infty)$. The (now relaxed) control $A$ is a measurable map taking values in $[A_0,0]$, where $A_0=\max\{\min H^L, \min H^R\}$.
\vs
The optimization problem \eqref{basicpb} then consists in minimizing  over measurable controls $A$, taking values in $[A_0,0]$, the cost
\be\label{optipb}
\mathcal J(A)= \int_0^T \int_\R \tilde \psi(x,t,u^A_x(x,t),A(t))dt,
\ee
where $\tilde \psi(x,t,\rho,A)= \psi(x,t,-\rho,A)$ is continuous function with a compact support which is convex in the last variable. We summarize saying that we study the optimization problem
\be\label{takis10}
\min \{\mathcal J(A): A:(0,T)\to [A_0,0] \ \text{measurable} \},
\ee
with  $\mathcal J(A)$ as in \eqref{optipb}.
\vs
Note that we restrict our analysis to a single junction on the line. The reason for this is that it is known from \cite{CFM23} that the junction conditions on a more general junction can be in general much more intricate. For example,  the control parameter in the case of one entry and two exit lines could be at each time a function describing how to dispatch the entry to the exits. In addition, the method of proof developed in this paper break down in this three  lines model because the equivalence between conservation law and Hamilton-Jacobi equations is lost. \vs
The second strong restriction of the paper is the structure of $\psi$ to obtain optimality conditions. If the existence of optimal solution can be derived under the general conditions given above, in order to study the optimality conditions we need to assume the cost functional to be written in the form 
\be\label{hyp.formpsi}
\mathcal J(A)= \int_0^T \int_\R  [\phi(x,t)u^A(x,t)+ f(A(t))]dt,
\ee
for a suitable continuous map $\phi:\R\times [0,T]\to \R$ with a compact support and $f:[A_0,0]\to \R$ of class $C^1$ and convex.  The reason for this restriction is that the derivative $u^A_x$ has a very singular dependence with respect to the variations of $A$. 
We show, however,  that the map $A\to u^A(x,t)$ has directional derivatives in many directions, which allows to derive optimality conditions. The cost function \eqref{optipb} can actually be written in the form \eqref{hyp.formpsi} when the map $\psi$ is an affine function of $\rho$ or of $f(\rho)$. This includes the problems consisting in maximizing the flux or minimizing the travel time. 
%
\vs
In this Hamilton-Jacobi setting, our main results are: the existence of an optimal minimizer $A\in L^\infty([0,T], [A_0,0])$ for the optimization problem \eqref{takis10} (Theorem \ref{thm.existence}). Following \cite{imbert-monneau} (see also \cite{AOT15, BCbook, LS1, LS}), it is known that, given a piecewise constant control $A$ taking values in $[A_0,0]$, the Hamilton-Jacobi equation \eqref{main.HJ} has a unique viscosity solution  $u^A$ and that $\rho^A= -u^A_x$ is the unique entropy solution of \eqref{eq.CL} (see \cite{CFGM}). The definition of a solution $u^A$, when $A$ is just measurable, is made by  using a representation formula (see Definition \ref{def.solHJ}), the equivalence between the two formulations (viscosity solution and representation formula) being known when $A$ is piecewise constant.  Assuming that the cost function $\mathcal J$ takes the form \eqref{hyp.formpsi}, we also obtain an optimality condition on the minimizer  (Theorem \ref{lem.OC}). We then  study thoroughly the nature of the maximizer when the problem consists in maximizing the flux on a space-time box of the form $[x_1,x_2]\times [t_1,t_4]$. Under suitable assumptions on the data, we show that the optimal control is piecewise constant and takes its values in $\{0,A_0\}$ (Theorem \ref{thm.mainexTOT}). We also give an example in which constant controls are not optimal (Proposition \ref{prop.exAnottrivial}). 
\vs
The paper is organized as follows. We first introduce our standing assumptions and notations (Section \ref{sec.hyp}). Then we prove the existence of a minimizer to \eqref{optipb} (Section \ref{sec.exist}).  Section \ref{sec.OC} gives the  optimality condition, while  Section \ref{sec.bangbang} is devoted to  problem of maximization of the flux in a box.

\section{Main definitions and assumptions} \label{sec.hyp}

\noindent {\bf Standing assumptions.} Throughout the paper we assume that 
\begin{equation}\label{takis1}
\left\{\begin{array}{ll}
H^L:\R\to \R \ \text{and} \  H^R:\R\to \R \ \text{ are uniformly convex  and}\\[1mm]
\text{ of class $C^2-$Hamiltonians}  \text{ and there exist} \ R^L \ \text{and} \  R^R \  \text{such that }\\[1mm]
H^L(-R^L)= H^L(0)=0=H^R(-R^R)=H^R(0),
\end{array}\right.
\end{equation}
and
\begin{equation}\label{takis2}
\left\{\begin{array}{ll}
 \text{the initial condition $u_0:\R\to \R$ is Lipschitz continuous and} \\[1mm] 
\text{$u_{0,x}\in (-R^L,0)$ in $\R_-$ and $ u_{0,x}\in (-R^R,0)$ in $\R_+$,} 
\end{array}\right.
\end{equation}
where 
\[A_0= \max\{\min_{p}H^L(p), \min_{p}H^R(p)\}. \]

We also denote by $\hat p^L\in (-R^L, 0)$ and  $\hat p^R\in (-\hat p^R,0)$ the unique point such that $H^L(\hat p^L)=\min_p H^L(p)$ and  $H^R(\hat p^R)=\min_p H^R(p)$. Note that $H^L$ is decreasing on $[-R^L,\hat p^L]$ and increasing on $[\hat p^L,0]$, and $H^R$ is decreasing on $[-R^L,\hat p^R]$ and increasing on $[\hat p^R,0]$
\vs

%
We write  $H^{L,+}$ and $H^{L,-}$ for the increasing part and decreasing part of $H^L$ respectively, that is,
$$
H^{L,+}(p)= \left\{\begin{array}{ll}
\underset{p}\min H^L(p) & \text{if}\; p\leq \hat p^L,\\
H^L(p) & \text{otherwise}, 
\end{array}\right.\qquad 
H^{L,-}(p)= \left\{\begin{array}{ll}
H^L(p) & \text{if}\; p\leq \hat p^L,\\[1.2mm]
\underset{p}\min H^L(p) & \text{otherwise}, 
\end{array}\right.
$$
and we  use a symmetrical notation for $H^R$.
\vs

%

\noindent {\bf Definition of a solution to \eqref{main.HJ}.}\label{def.solHJ} Following \cite{imbert-monneau} (see also \cite{BCbook, LS1, LS}), it is known that, for any constant flux limiter $A\in [A_0,0]$, there exists a unique viscosity solution $u^A$ to \eqref{main.HJ}, which can be represented as 
$$
u^A(x,t)= \inf_{\gamma\in H^1, \; \gamma(t)=x} \big[\int_0^t \big(L^R(\dot \gamma){\bf 1}_{\{\gamma>0\}} +L^L(\dot \gamma){\bf 1}_{\{\gamma<0\}} -A{\bf 1}_{\{\gamma=0\}} \big)ds +u_0(\gamma(0))\big],
$$
where the Lagrangians $L^L$ and $L^R$ are defined, for any $\alpha\in \R$,   by 
$$
L^{L}(\alpha)= \sup_{p\in [-R^L, 0]} \{\alpha p -H^{L}(p)\} \ \ \text{and}\ \  L^{R}(\alpha)= \sup_{p\in [-R^R, 0]} \{\alpha p -H^{R}(p)\}. 
$$

The existence, uniqueness and representation of a solution can easily be extended to piecewise constant maps $A:[0,T]\to [A_0,0]$; see, for instance, the construction in \cite{CFM23}. Here we need to work with measurable maps $A: [0,T]\to [A_0,0]$. Although it is possible to extend the notion of viscosity solution to this more general framework (see the  paper in preparation \cite{Brprepa}), we will not use this point of view here and only work with the representation formula. 

\begin{defn} Let $A:[0,T]\to [A_0,0]$ be measurable. Then we denote by $u^A$ the map 
\be\label{calcvaruA}
u^A(x,t)= \inf_{\gamma\in H^1(0,t), \; \gamma(t)=x} J^A(\gamma),
\ee
where 
$$
J^A(\gamma) = \int_0^t (L^R\big(\dot \gamma(s)){\bf 1}_{\{\gamma(s)>0\}} +L^L(\dot \gamma(s)){\bf 1}_{\{\gamma(s)<0\}} -A(s){\bf 1}_{\{\gamma(s)=0\}} \big)ds  +u_0(\gamma(0)).
$$
\end{defn}

It is easy to see that  dynamic programming gives that $u^A$ is Lipschitz continuous and satisfies \eqref{main.HJ} in the viscosity sense in $(\R\backslash \{0\})\times (0,T)$.  Note also that $u_x^A\in [-R^L,0]$ and $u^A_t \in [\min H^L,0]$ a.e. in $\{x<0\}$, while $u_x^A\in [-R^R,0]$ and $u^A_t \in [\min H^R,0]$ a.e. in $\{x>0\}$.

\section{Existence of an optimal control} \label{sec.exist}
We prove here the existence of a minimizer for the  optimal control problem \eqref{takis10}. This also leads to  the existence of optimal trajectories for \eqref{calcvaruA}. 

\subsection{The existence result}

Throughout the section we assume that
\begin{equation}\label{takis100}
\begin{split}
& \text{the cost function} \   \tilde \psi:\R\times [0,T]\times \R\times [A_0,0]\to \R  \  \text{is measurable and bounded,}\\ 
& \text{has  compact support, and is continuous  in the last two variables}\\
& \text{and is convex in the last variable.}
\end{split}
\end{equation}
The main result of this section is the following theorem.

\begin{thm}\label{thm.existence}  Assume \eqref{takis1}, \eqref{takis2} and \eqref{takis100}. 
Then, the  problem \eqref{takis10} admits at least one solution. 
\end{thm}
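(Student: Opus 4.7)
\emph{Proof sketch.} I would apply the direct method of the calculus of variations. Since $\tilde\psi$ is bounded and has compact support, $\mathcal J$ is finite and uniformly bounded below; choose a minimizing sequence $A_n : [0,T]\to [A_0,0]$. By Banach-Alaoglu, after extracting a subsequence, $A_n \rightharpoonup^* A$ in $L^\infty(0,T)$, and since $[A_0,0]$ is convex and closed, $A(t) \in [A_0,0]$ for a.e.\ $t$.

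The core step is the stability of the map $A \mapsto u^A$ under weak-$*$ convergence of $A$. I would argue directly from the representation formula \eqref{calcvaruA}. For every fixed $H^1$ curve $\gamma$, the costs satisfy
$$J^{A_n}(\gamma) - J^A(\gamma) = -\int_0^t (A_n(s) - A(s))\mathbf{1}_{\{\gamma(s)=0\}}\,ds \longrightarrow 0,$$
by weak-$*$ convergence tested against the fixed $L^1$ function $\mathbf{1}_{\{\gamma=0\}}$. Taking $\gamma$ near-optimal for $u^A(x,t)$ yields $\limsup_n u^{A_n}(x,t) \le u^A(x,t)$. For the reverse inequality, I take near-minimizing curves $\gamma_n$ for $u^{A_n}(x,t)$; the $L^\infty$ bound on $A_n$ and superlinearity of $L^L, L^R$ provide $H^1$-bounds on $\gamma_n$, so some subsequence satisfies $\gamma_n \rightharpoonup \gamma$ weakly in $H^1$ and uniformly. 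Passing to the limit in $\int_0^t [L^R(\dot\gamma_n)\mathbf{1}_{\gamma_n>0} + L^L(\dot\gamma_n)\mathbf{1}_{\gamma_n<0} - A_n \mathbf{1}_{\gamma_n=0}]\,ds$ combines classical weak-$H^1$ lower semicontinuity for convex-in-velocity state Lagrangians on the open regions $\{\gamma>0\}$ and $\{\gamma<0\}$ with the algebraic identity $-A_0 = \min\{L^L(0), L^R(0)\}$, which reconciles the junction contribution $-A$ on $\{\gamma=0\}$ with the state Lagrangians on nearby pieces where $\gamma_n$ has a definite sign.

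Once locally uniform convergence $u^{A_n} \to u^A$ is in hand, the uniform semiconcavity of $u^{A_n}$ in $x$ on each open half-line $\{x<0\}$ and $\{x>0\}$, a standard consequence of the uniform convexity of $H^L$, $H^R$ together with the Lipschitz bounds from Section \ref{sec.hyp}, gives $u^{A_n}_x \to u^A_x$ almost everywhere in $\R\times(0,T)$. To conclude, I would invoke a classical Ioffe-type lower semicontinuity result: the Carath\'eodory integrand $\tilde\psi(x,t,\rho,A)$ is bounded and convex in $A$, so the functional $(\rho, A)\mapsto \int_0^T\!\!\int_\R \tilde\psi(x,t,\rho(x,t),A(t))\,dx\,dt$ is sequentially lower semicontinuous with respect to a.e.\ convergence of $\rho$ and weak-$*$ convergence of $A$ in $L^\infty(0,T)$; a reduction via Fubini to the one-dimensional statement in $t$ (for a.e.\ fixed $x$) plus Fatou (using the compact support of $\tilde\psi$) takes care of the joint passage to the limit. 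Hence $\mathcal J(A) \le \liminf_n \mathcal J(A_n)$, so $A$ is a minimizer.

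The main obstacle, as indicated above, is the lower-bound half of the stability step. The running cost depends discontinuously on $\gamma$ through the sign indicators $\mathbf{1}_{\{\gamma\gtrless 0\}}$ and $\mathbf{1}_{\{\gamma=0\}}$, so uniform convergence $\gamma_n \to \gamma$ does not yield pointwise convergence of these indicators (e.g.\ $\gamma_n \equiv 1/n \to \gamma\equiv 0$), and one cannot simply apply off-the-shelf $\Gamma$-convergence. The resolution must carefully exploit the compatibility $-A \le -A_0 \le L^L(0)\wedge L^R(0)$ built into the definition of $A_0$ to compare the limiting junction cost with the state Lagrangians evaluated along small deviations from the junction.
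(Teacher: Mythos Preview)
Your overall architecture---direct method, weak-$*$ compactness of controls, stability $A\mapsto u^A$, semiconcavity to upgrade to a.e.\ convergence of $u^A_x$, then Ioffe-type lower semicontinuity---matches the paper's exactly, and your upper bound $\limsup u^{A_n}\le u^A$ is the paper's argument verbatim. The final lower-semicontinuity step for $\mathcal J$ is also fine.

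There is, however, a genuine gap in the lower-bound half of the stability step. You write that ``the $L^\infty$ bound on $A_n$ and superlinearity of $L^L, L^R$ provide $H^1$-bounds on $\gamma_n$.'' But $L^L$ and $L^R$ are \emph{not} superlinear: they are Legendre transforms of $H^L$, $H^R$ taken over the \emph{compact} intervals $[-R^L,0]$ and $[-R^R,0]$, hence globally Lipschitz. Concretely (see \eqref{lkzjensdf}), $L^L(\alpha)=0$ for $\alpha\ge (H^L)'(0)$ and $L^L(\alpha)=-R^L\alpha$ for $\alpha\le (H^L)'(-R^L)$, and similarly for $L^R$. So a near-minimizer $\gamma_n$ can have arbitrarily large $\|\dot\gamma_n\|_{L^2}$ at no extra cost, and no $H^1$ (or even $W^{1,1}$) precompactness is available from coercivity. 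Without a limiting curve $\gamma$, the rest of your lower-bound argument cannot start.

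The paper resolves this not by coercivity but by a \emph{structure lemma} (Lemma~\ref{takis11BIS}): any $\gamma$ can be replaced, without increasing $J^A$, by a curve that is affine on an initial segment $[0,a]$, identically zero on $[a,b]$, and affine on $[b,t]$. This reduces the minimization to a finite-dimensional problem in $(a,b,\gamma(0))$. A separate argument, exploiting exactly the linear growth of $L^{L/R}$ at the endpoints together with the Lipschitz bounds $u_{0,x}\in[-R^{L/R},0]$, shows $\gamma_n(0)$ can be taken bounded. Compactness then comes from finite dimensions, and the limit analysis splits into cases according to whether the affine pieces degenerate; the inequality $L^{L}(0)\wedge L^R(0)=-A_0\ge -A(s)$ that you correctly identified is precisely what handles the degenerate cases. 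Your last paragraph anticipates the indicator-function difficulty but not the prior failure of compactness; the structure lemma is the missing ingredient.
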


The proof  is given at the end of the section. The key remark is the continuity of the map $A\to u^A$ with respect to the weak-$\ast$ convergence of $A$, a fact which needs justification in view of the highly nonlinear way in which  $u^A$ depend on $A$. 
In view of the uniform (in $A$) local semiconcavity of $u^A$ in $(\R\backslash\{0\})\times (0,T)$, this continuity then easily implies the continuity of the map $A\to u^A_x$. 

\vs

The proof of the continuity of $u^A$ in $A$ is based on the following lemma, also proved at the end of the section, which implies  that   we can always assume that any $\ep-$suboptimal path $\gamma$ for $u^A(x,t)$ is piecewise linear and may vanish on an interval.   

\begin{lem}\label{takis11BIS}
Assume \eqref{takis1} and  \eqref{takis2} and fix  a measurable $A: [0,t] \to [A_0, 0]$. For any  $\gamma\in H^1((0,t);\R)$, there exists $\tilde \gamma\in H^1((0,t);\R)$  and $0\leq a\leq b\leq t$ such that \\
(i)~$\tilde \gamma(t)=\gamma(t)$,\\
(ii)~$ J^A(\tilde \gamma)\leq  J^A(\gamma)$, and\\
(iii)~either $\tilde \gamma$ is a nonvanishing straight-line, or $\tilde \gamma$ is affine and does not vanish on $(0,a)$ and on $(b,t)$, and $\tilde \gamma \equiv 0$ on $[a,b]$. \\
Moreover, $ J^A(\tilde \gamma)<  J^A(\gamma)$ unless $\gamma= \tilde \gamma$. 
\end{lem}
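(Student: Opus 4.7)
\textbf{Plan for Lemma~\ref{takis11BIS}.} The argument rests on two elementary facts about the Lagrangians $L^L,L^R$. Since $H^{L/R}(0)=0$, taking $p=0$ in the Legendre sup shows $L^{L/R}\geq 0$ on $\R$, with $L^L(0)=-\min H^L$ and $L^R(0)=-\min H^R$; convexity is automatic. The definition of $A_0$ then yields the key pointwise bound
\[
-A(s)\;\leq\; -A_0\;=\;\min\bigl\{L^L(0),\,L^R(0)\bigr\}\qquad \text{for a.e. } s\in[0,t].
\]
I would split the construction of $\tilde\gamma$ into two cases.

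\textbf{Case 1: $\gamma$ does not vanish on $[0,t]$.} By continuity $\gamma$ has constant sign; say $\gamma>0$, the other case being symmetric. Let $\tilde\gamma$ be the affine segment from $\gamma(0)$ to $\gamma(t)$, which is itself positive throughout; this puts us in the first branch of (iii), and (i) is obvious. Jensen's inequality for the convex function $L^R$ yields
\[
J^A(\gamma)-J^A(\tilde\gamma)\;=\;\int_0^t L^R(\dot\gamma)\,ds - t\,L^R\!\left(\tfrac{\gamma(t)-\gamma(0)}{t}\right)\;\geq\;0,
\]
which is (ii).

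\textbf{Case 2: $\gamma$ vanishes somewhere.} Set $a=\min\{s\in[0,t]:\gamma(s)=0\}$ and $b=\max\{s\in[0,t]:\gamma(s)=0\}$, both attained with $\gamma(a)=\gamma(b)=0$. Define $\tilde\gamma$ by concatenating the affine segment from $\gamma(0)$ to $0$ on $[0,a]$, the constant $0$ on $[a,b]$, and the affine segment from $0$ to $\gamma(t)$ on $[b,t]$; properties (i) and (iii) are built in. Since $\gamma$ has constant sign on $(0,a)$ and on $(b,t)$ by the choice of $a,b$, the Jensen argument of Case~1 applied on each of these intervals shows that $\tilde\gamma$ costs no more than $\gamma$ there. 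The heart of the matter is $[a,b]$: decompose the open set $\{s\in(a,b):\gamma(s)\neq 0\}$ into its countably many maximal connected components $(c_i,d_i)$, each satisfying $\gamma(c_i)=\gamma(d_i)=0$ and $\gamma$ constant-signed on $(c_i,d_i)$ by continuity and maximality. On a component with $\gamma>0$, Jensen's inequality yields
\[
\int_{c_i}^{d_i}L^R(\dot\gamma)\,ds\;\geq\;(d_i-c_i)\,L^R\!\left(\tfrac{\gamma(d_i)-\gamma(c_i)}{d_i-c_i}\right)\;=\;(d_i-c_i)\,L^R(0),
\]
and combining with the key bound $L^R(0)\geq -A(s)$ gives $\int_{c_i}^{d_i}L^R(\dot\gamma)\,ds\geq \int_{c_i}^{d_i}(-A(s))\,ds$; the analogous estimate with $L^L$ handles components where $\gamma<0$. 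Summing over $i$ and adding the trivially equal $\int_{\{\gamma=0\}\cap[a,b]}(-A(s))\,ds$ to both sides shows that replacing $\gamma|_{[a,b]}$ by $0$ does not increase the cost, establishing (ii).

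\textbf{Main obstacle and strictness.} The principal technical input is the $[a,b]$ estimate, which crucially couples Jensen's inequality with the bound $-A(s)\leq L^{L/R}(0)$ distilled from $A\geq A_0$; without this coupling, no excursion comparison is possible. For the strict inequality in the final clause, if $\gamma\neq\tilde\gamma$ then at least one of the pieces in the decomposition is non-affine (or nontrivially non-zero on $[a,b]$), and Jensen there is strict because $L^{L/R}$ is strictly convex on a neighborhood of the corresponding average velocity (this is ensured, on the excursions, by the fact that the average is $0$, an interior point of the strictly convex region of $L^{L/R}$ inherited from the uniform convexity of $H^{L/R}$ around $\hat p^{L/R}$); this strictness lifts to $J^A(\tilde\gamma)<J^A(\gamma)$.
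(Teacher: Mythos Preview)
Your argument follows the paper's proof essentially line by line: the same case split on whether $\gamma$ vanishes, the same affine interpolation on the outer segments, the same decomposition of $[a,b]$ into sign-constant excursions, and the same coupling of Jensen's inequality with the bound $-A(s)\le -A_0\le L^{L/R}(0)$. The only soft spot---shared verbatim with the paper---is the strictness clause on the outer pieces $[0,a]$ and $[b,t]$: the Lagrangians $L^{L/R}$ are affine on their tails (cf.\ \eqref{lkzjensdf}), so strict convexity ``near the average velocity'' is not enough to force strict Jensen there, though this does not affect parts (i)--(iii) or any subsequent use of the lemma.
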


%
\vs

We continue with the continuity property of $u^A$ in $A$.

\begin{prop}\label{prop:weakstab} Assume that a sequence $(A^n)_{n\in \N}$ in $L^\infty([0,T], [A_0,0])$  converges weakly-$\ast$ to $A\in L^\infty([0,T], [A_0,0])$. Then the $u^{A^n}$'s converge locally uniformly to $u^A$. In addition, for any $(x,t)\in \R\times (0,T]$, there exists an optimal trajectory $\gamma\in H^1([0,t], \R)$ with $\gamma(t)=x$ for $u^A(x,t)$. 
\end{prop}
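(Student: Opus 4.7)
The plan is to exploit Lemma \ref{takis11BIS} to replace the infimum in the representation formula by one over a \emph{simple} class of trajectories on which the dependence of $J^A$ on $A$ becomes elementary. The chief difficulty is that $A\mapsto J^A(\gamma)$ is \emph{not} continuous under weak-$\ast$ convergence for a general $\gamma\in H^1$: the set $\{\gamma=0\}$ can be a complicated measurable set depending discontinuously on $\gamma$, and Lemma \ref{takis11BIS} is precisely what bypasses this obstruction. Concretely, let $\mathcal{A}_{x,t}$ denote the class of curves $\gamma:[0,t]\to\R$ with $\gamma(t)=x$ that are either non-vanishing affine maps, or piecewise affine with a single interval $[a,b]\subset[0,t]$ on which they vanish (and do not vanish on $[0,a)\cup(b,t]$). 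Lemma \ref{takis11BIS} yields $u^A(x,t)=\inf_{\gamma\in\mathcal{A}_{x,t}}J^A(\gamma)$. For $\gamma\in\mathcal{A}_{x,t}$ with vanishing interval $[a,b]$, the only $A$-dependent term in $J^A(\gamma)$ is $-\int_a^b A(s)\,ds$ (and this term is absent in the non-vanishing case). Since $\mathbf{1}_{[a,b]}\in L^1(0,T)$, the weak-$\ast$ convergence $A^n\rightharpoonup A$ gives $J^{A^n}(\gamma)\to J^A(\gamma)$ for every fixed $\gamma\in\mathcal{A}_{x,t}$.

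For the upper bound, fix $(x,t)$ and $\ep>0$ and choose an $\ep$-minimizer $\gamma\in\mathcal{A}_{x,t}$ for $u^A(x,t)$; then $\limsup_n u^{A^n}(x,t)\le\limsup_n J^{A^n}(\gamma)=J^A(\gamma)\le u^A(x,t)+\ep$. For the lower bound, pick $\gamma^n\in\mathcal{A}_{x,t}$ with $J^{A^n}(\gamma^n)\le u^{A^n}(x,t)+1/n$. Since $\|A^n\|_\infty\le|A_0|$ and $\sup_n u^{A^n}(x,t)<\infty$ (apply the upper bound to any fixed simple path), the superlinearity of $L^L$ and $L^R$ together with the continuity of $u_0$ force the slopes of the affine pieces of $\gamma^n$ and the initial points $\gamma^n(0)$ to remain in a compact set. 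Extracting a subsequence, $\gamma^n\to\gamma$ uniformly on $[0,t]$ for some admissible $\gamma$ with $\gamma(t)=x$, and the endpoints $a^n\to a$, $b^n\to b$ of the vanishing intervals converge. The non-vanishing contributions to $J^{A^n}(\gamma^n)$ converge to those of $J^A(\gamma)$ by continuity of $L^{L/R}$ and convergence of the integration limits, while for the vanishing part I decompose
\[
\int_{a^n}^{b^n}A^n(s)\,ds - \int_a^b A(s)\,ds = \Bigl(\int_{a^n}^{b^n}A^n(s)\,ds - \int_a^b A^n(s)\,ds\Bigr) + \int_a^b (A^n-A)(s)\,ds,
\]
where the first bracket is bounded by $|A_0|(|a^n-a|+|b^n-b|)\to0$ and the second tends to $0$ by weak-$\ast$ convergence tested against $\mathbf{1}_{[a,b]}\in L^1$. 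Hence $J^{A^n}(\gamma^n)\to J^A(\gamma)\ge u^A(x,t)$, so $\liminf_n u^{A^n}(x,t)\ge u^A(x,t)$.

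Applying the same compactness argument in the case $A^n\equiv A$ to a minimizing sequence in $\mathcal{A}_{x,t}$ for $J^A$ produces an admissible $\gamma$ with $J^A(\gamma)=u^A(x,t)$, which is the desired optimal trajectory. Finally, the uniform (in $A$) Lipschitz bounds $|u^A_x|\le\max(R^L,R^R)$ and $|u^A_t|\le\max(-\min H^L,-\min H^R)$ recorded just after Definition~\ref{def.solHJ}, combined with the pointwise convergence $u^{A^n}\to u^A$ just proved, upgrade by Arzel\`a--Ascoli to locally uniform convergence on $\R\times[0,T]$.
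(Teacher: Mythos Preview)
Your overall architecture---restrict to the simple class $\mathcal A_{x,t}$ via Lemma~\ref{takis11BIS}, prove the upper bound by testing a fixed $\gamma$, prove the lower bound by compactness of near-minimizers---is the same as the paper's, and the decomposition you write for $\int_{a^n}^{b^n}A^n-\int_a^bA$ is clean. However there is a genuine gap in the compactness step.

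You assert that ``the superlinearity of $L^L$ and $L^R$ together with the continuity of $u_0$ force the slopes \dots\ and the initial points $\gamma^n(0)$ to remain in a compact set.'' But $L^L$ and $L^R$ are \emph{not} superlinear: they are Legendre transforms of $H^{L/R}$ over the \emph{compact} intervals $[-R^{L/R},0]$, and the paper records explicitly (see \eqref{lkzjensdf}) that $L^L(\alpha)=0$ for $\alpha\ge (H^L)'(0)$ and $L^L(\alpha)=-R^L\alpha$ for $\alpha\le (H^L)'(-R^L)$, i.e.\ linear growth at both ends. Worse, the linear growth rate of $L^R$ at $-\infty$ is exactly $R^R$, which is also the Lipschitz bound on $u_0$ on $(0,\infty)$, so the combination $a^nL^R(-\gamma^n(0)/a^n)+u_0(\gamma^n(0))$ need not blow up as $\gamma^n(0)\to+\infty$: an upper bound on $J^{A^n}(\gamma^n)$ does \emph{not} by itself force $\gamma^n(0)$ to be bounded. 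The paper deals with this by a different mechanism: it shows that whenever $\gamma^n(0)$ lies beyond the explicit thresholds $y^n=-a^n(H^R)'(-R^R)$ or $z^n=-a^n(H^L)'(0)$, one can \emph{replace} $\gamma^n(0)$ by $y^n$ (resp.\ $z^n$) without increasing the cost, using precisely the matching between the linear regime of $L^{L/R}$ and the slope bounds $u_{0,x}\in(-R^{L/R},0)$ from \eqref{takis2}. You need this (or an equivalent) argument.

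A secondary point: once $\gamma^n(0)$ is bounded you can pass to a limit $\gamma$, but if $\gamma^n(0)\to0$ the limit satisfies $\gamma\equiv0$ on $[0,a]$, so its true vanishing set is $[0,b]$, not $[a,b]$. In that case the limit of the ``non-vanishing'' contribution on $[0,a^n]$ is $aL^{L/R}(0)$, which is not the corresponding term in $J^A(\gamma)$ (namely $-\int_0^aA$). One still gets the desired inequality because $L^{L/R}(0)=-\min H^{L/R}\ge -A_0\ge -A(s)$, but this step has to be made explicit; the paper devotes a separate paragraph to it.
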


\begin{proof} Note first that, since the  $u^{A^n}$'s are uniformly Lipschitz continuous, they converge along subsequences. Without loss of generality, we can assume that the whole sequence $(u^{A^n})$ converges to some $u$ and we check that $u= u^A$. 
\vs
We first observe that $u\leq u^A$. Indeed, for any $(x,t)\in \R\times (0,T]$ and any $\gamma\in H^1([0,t])$ such that $\gamma(t)=x$, we have, in view of the weak-$\ast$ convergence of  the $u^{A^n}$'s, that 
\begin{align*}
u(x,t) & = \underset{n\to \infty} \lim u^{A^n}(x,t)\leq \underset{n\to \infty} \lim  \int_0^t \big[L^R(\dot \gamma){\bf 1}_{\{\gamma>0\}} +L^L(\dot \gamma){\bf 1}_{\{\gamma<0\}} -A^n{\bf 1}_{\{\gamma=0\}} \big ]ds +u_0(\gamma(0))\\
&=  \int_0^t \big[L^R(\dot \gamma){\bf 1}_{\{\gamma>0\}} +L^L(\dot \gamma){\bf 1}_{\{\gamma<0\}} -A{\bf 1}_{\{\gamma=0\}} \big]ds +u_0(\gamma(0))= J^A(\gamma).
\end{align*}
Taking the infimum over $\gamma$ gives the claim. 
\vs 
For the other inequality, we fix $(x,t)\in \R\times (0,T]$. 
Let $\gamma^n$ be a $1/n-$minimizer for $u^{A^n}(x,t)$,
that is 
$$
J^{A^n}(\gamma^n) \leq u^{A^n}(x,t) +1/n. 
$$

Following Lemma~\ref{takis11BIS}, we may assume that either $\gamma^n$ is affine and does not vanish on $(0,t)$, or  there exist $0\leq a^n \leq   b^n<t$ such that $\gamma^n$ is affine and does not vanish in $(b^n,1]$, $\gamma$ is affine and does not vanish in $[0,a^n)$ and $\gamma^n\equiv 0$ in $[a^n,b^n]$. 
\vs 
Next we show that $\gamma^n(0)$ can be assumed to be bounded. For simplicity, we assume that $\gamma^n$ vanishes on $(0,t)$ with $a^n>0$, as the other cases can be treated with similar and simpler argument. Recall that 
\be\label{lkzjensdf}
L^L(\alpha)= \sup_{p\in [-R^L, 0]} \{ p\alpha -H^L(p) \}= \left\{\begin{array}{ll}
0 & \text{if }\; \alpha \geq (H^L)'(0),\\
- R^L \alpha & \text{if }\; \alpha \leq (H^L)'(-R^L),
\end{array}\right.
\ee
and the symmetric equality for $L^R$. 
\vs
Since  $\gamma^n(a^n)=0$, we have $\gamma^n(s)=(a^n-s)\gamma^n(0)/a^n$ on $[0,a^n]$. Thus
$$
J^{A^n}(\gamma^n) = a^n(L^R(-\gamma^n(0)/a^n){\bf 1}_{\{\gamma^n(0)>0\}}+ L^L(-\gamma^n(0)/a^n){\bf 1}_{\{\gamma^n(0)<0\}})+u_0(\gamma^n(0))+R^n, 
$$
where 
$$
R^n = \int_{b^n}^t (L^R(\dot \gamma^n){\bf 1}_{\{\gamma>0\}}+L^L(\dot \gamma^n){\bf 1}_{\{\gamma<0\}}) -\int_{a^n}^{b^n} A^n(s)
$$

is bounded since $\gamma^n$ is affine between $b^n$ and $t$,  vanishes at $b^n$ and is equal to $x$ at $t$. 
\vs
On the other hand, if $-\gamma^n(0)/a^n\leq (H^R)'(-R^R)$, that is, $\gamma^n(0)\geq y^n=-a^n (H^R)'(-R^R)>0$, then, as $u_0$ satisfies $u_{0,x}\geq -R^R$ in $(0, \infty)$ and \eqref{lkzjensdf} holds, we find 
\begin{align*}
 J^{A^n}(\gamma^n) & = a^n L^R(-\gamma^n(0)/a^n) +u_0(\gamma^n(0))+R^n  \\
 & \geq  R^R \gamma^n(0)+u_0(y^n)+(-R^R)( \gamma^n(0)-y^n) +R^n \\
 & = a^n L^R(-y^n/a^n) +u_0(y_n) + R^n, 
 \end{align*}
 which shows that we can replace $\gamma^n(0)$ by $y^n= -a^n (H^R)'(-R^R)$ in this case. 
 \vs
 In a symmetric way, if $-\gamma^n(0)/a^n\geq (H^L)'(0)$, that is, $\gamma^n(0)\leq z^n=-a^n (H^L)'(0)<0$, then, as $u_0$ is nonincreasing and \eqref{lkzjensdf} holds, 
\begin{align*}
 J^{A^n}(\gamma^n) & = a^n L^L(-\gamma^n(0)/a^n) +u_0(\gamma^n(0))+R^n \geq u_0(z^n)+R^n \\
 & = a^n L^R(-z^n/a^n) +u_0(z_n) + R^n, 
 \end{align*}
which proves that we can replace  $\gamma^n(0)$ by $z^n= -a^n (H^L)'(0)$. 
\vs
In conclusion, we can assume that $\gamma^n(0)$ is bounded. 
\vs

Taking a subsequence,  still labelled for simplicity in the same way, we now assume that the $\gamma^n$'s  converge on $[0,t]$ to some $\gamma$ and that $a^n$'s and $b^n$'s converge to some $a$ and $b$ respectively, with $0\leq a\leq b<t$. 
Note  that the $\dot{\gamma}^n$'s  converge to $\dot \gamma$ in $L^1$. 
%
%
\vs

If $a=0$ or $a>0$ and $\gamma$ does not vanish in $[0,a)$,  and if $b=t$ or if $\gamma$ does not vanish in $(b,t]$, it follows that   $\{\gamma=0\}=[a,b]$ is the limit of the interval $\{\tilde \gamma^n=0\}=[a^n, b^n]$ and the same holds for the sets $\{\gamma<0\}$ and $\{\gamma>0\}$. Since  the $\dot{ \gamma}^n$'s converge to $\dot \gamma$ in $L^1$, 
we can pass directly to the limit in the definition of $u^{A^n}(x,t)$ to get
\begin{align*}
u(x,t) & = \int_0^t L^L(\dot \gamma(s)){\bf 1}_{\{\gamma(s)<0\}}+  L^R(\dot \gamma(s)){\bf 1}_{\{\gamma(s)>0\}} - A (s) {\bf 1}_{\{\gamma(s)=0\}} \ ds + u_0(\gamma(0))\\ & \geq u^A(x,t), 
\end{align*}
which proves the equality $u(x,t)=u^A(x,t)$ and the fact that $\gamma$ is optimal for $u^A(x,t)$ in this case. 
\vs

We now suppose that $a>0$ and $\gamma$ vanishes  somewhere in $(0,a)$, but that $b=t$ or that $\gamma$ does not vanish in $(b,t]$. 
Then, since  $\gamma(a)=0$ and $\gamma$ is affine, $\gamma \equiv0$ on $[0,a]$. Writing $\gamma^n(s)=\alpha^ns+\beta^n$ on $(0,a^n)$, with $\beta^n\neq 0$ since $\gamma^n(0)\neq 0$ and $\alpha^n\neq 0$ since $\gamma^n(a^n)=0$, we have  that $\alpha^n\to 0$ and $\beta^n\to 0$. 
\vs
Assume next, to fix the ideas, that $\beta^n<0$ for any $n$ up to a subsequence. In view of the definition of $u^{A^n}(x,t)$, we have 
\[
u^{A^n}(x,t)= a^n  L^L(\alpha^n) - \int_{a^n}^{b^n} A^n (s) ds + u_0(\beta^n) + R^n,
\]
with 
$$
R^n =  \int_{b^n}^t \big[L^L(\dot \gamma^n(s)){\bf 1}_{\{\gamma^n(s)<0\}}+  L^R(\dot \gamma^n(s)){\bf 1}_{\{\gamma^n(s)>0\}}\big] \ ds. 
$$

The argument developed above yields that the $R^n$'s converge to 
$$
R=  \int_b^t \big[L^L(\dot \gamma(s)){\bf 1}_{\{\gamma(s)<0\}}+  L^R(\dot \gamma(s)){\bf 1}_{\{\gamma(s)>0\}}\big]\ ds . 
$$
Thus 
$$
u(x,t)= aL^L(0) - \int_a^b A(s)ds + u_0(0)+ R.
$$
As $L^L(0)= -\min_p H^L(p)\geq  -A_0\geq -A(s)$ for a.e. $s\in [0,a]$, we get 
$$
u(x,t)\geq -\int_0^b A(s)ds + u_0(0)+ R = J^A(\gamma) \geq u^A(x,t). 
$$
This shows that  $u(x,t)=u^A(x,t)$ and the fact that $\gamma$ is optimal for $u^A(x,t)$ in this case. 
%
\vs
If  $\beta^n>0$ up to a subsequence, or if $b<t$ and $\gamma$ vanishes in $(b,t]$, we argue similarly. 
\vs
In conclusion, we have proved that $u=u^A$ and, therefore,  that the sequence $(u^{A^n})$ converges to $u^A$. In addition  we have also shown  the existence of an optimal trajectory $\gamma$ for $u^A(x,t)$ for any $(x,t)\in \R\times (0,T]$. 
\end{proof}

We continue with  the proofs of Theorem~\ref{thm.existence} and Lemma~\ref{takis11BIS}.
\vs
\begin{proof}[Proof of Theorem \ref{thm.existence}] Let $A^n$ be a minimizing subsequence. Extracting a subsequence if necessary, we can assume that the $A^n$'s converge  weakly$-\ast$  to some $A\in L^\infty([0,T], [A_0,0])$ and, therefore,  that the $u^{A^n}$'s  converge locally uniformly to $u^A$. 
\vs
Since  the Hamiltonians are uniformly convex, the  $u^{A^n}$'s are uniformly locally semiconcave in $(x,t)$ on compact subsets of $(\R\backslash\{0\})\times (0,T)$. Thus the bounded sequence $(u^{A^n}_x)_{n\in \N}$ converges in $L^1_{Loc}$ to $ u^A_x$.  The continuity of the compactly supported $\tilde \psi$ and its convexity in the last variable,  along standard lower semicontinuity arguments, yield  that $A$ is a minimum of $\mathcal J$. 
\end{proof}

\begin{proof}[Proof of Lemma~\ref{takis11BIS}] Fix $\gamma\in H^1(0,t)$. If $\gamma$ does not vanish on $[0,t]$, then we denote $\tilde \gamma$ be the linear interpolation between $\gamma(0)$ and $\gamma(t)$. By the strict convexity of $L^L$, if $\gamma$ remains negative, and of $L^R$, if $\gamma$ remains positive, we have immediately that $ J^A(\tilde \gamma)\leq  J^A(\gamma)$, with an equality only if $\gamma=\tilde \gamma$.

Let us now assume that $\gamma$ vanishes in $[0,t]$ and let 
\[a=\sup\{a' \in (0,t)\; : \text{$\gamma$ does not vanish in $[0,a')$}\},\]
\[b=\inf\{b'\in (0,t) \; : \text{$\gamma$ does not vanish in $(b',t]$}\}.\]
By convention, we set $a=0$ if $\gamma(0)=0$ and/or $b=t$ if $\gamma(t)=0$. Note that,  since $\gamma$ vanishes in $[0,t]$, $0\leq a\leq b\leq t$. 
\vs
We define $\tilde \gamma$ as follows: if $a>0$, then $\tilde \gamma$ is  on $[0,a]$ the linear interpolation between $\gamma(0)$ and $0$. On $[a,b]$, we set $\tilde \gamma\equiv 0$. Finally, if $b<t$, then $\tilde \gamma$ is  on $[b,t]$ the linear interpolation between $0$ and $\gamma(t)$.  Note that $\tilde \gamma$ is affine and does not vanish on $(0,a)$ and on $(b,t)$, and $\tilde \gamma\equiv 0$ on $[a,b]$. In addition  $\tilde \gamma(t)=\gamma(t)$.\vs 
Using again the strict convexity of $L^L$ and $L^R$ we have immediately that 
$$
\int_{[0,a]\cup[b,t]} \big[L^L(\dot \gamma){\bf 1}_{\{\gamma<0\}}+ L^R(\dot \gamma){\bf 1}_{\{\gamma>0\}}\big] ds \geq  
\int_{[0,a]\cup[b,t]} \big[L^L(\dot{\tilde \gamma}){\bf 1}_{\{\tilde \gamma<0\}}+ L^R(\dot{\tilde  \gamma}){\bf 1}_{\{\tilde \gamma>0\}}\big] ds,
$$
with an equality only if $\gamma=\tilde \gamma$ on $[0,a]\cup[b,t]$. \vs 
We now compare the cost for $\gamma$ and $\tilde \gamma$ on $[a,b]$. The open set $\{\gamma\neq 0\}\cap (a,b)$ is covered by a countable union of disjoint open intervals, and $\gamma=0$ at the end points of each of these intervals.
\vs
Let $(c,d)$ be one of these intervals and assume, without any loss of generality, that $\gamma >0$ there. Then using the strict convexity of $L^R$ and that $\gamma(c)=\gamma(d)=0$, we find 
$$
\int_c^d L^R(\dot \gamma(s))ds > \int_c^d L^R(0) ds +\int_a^b (L^R)'(0)(\dot \gamma^n(s)-0)ds= \int_c^d L^R(0) ds,
$$
which, in view of the fact that  $L^R(0)= \sup_{p\in [-R^0,0]} -H^R(p)\geq -A_0\geq -A(s)$ a.e. in $s$, further  yields
$$
\int_c^d L^R(\dot \gamma(s))ds > - \int_c^d A(s)ds.
$$
Arguing similarly for any of the other open intervals, we see that 
\begin{equation*}
\begin{split}
\int_a^b \big[L^R(\dot \gamma(s)){\bf 1}_{\{\gamma>0\}}+ L^L(\dot \gamma(s)){\bf 1}_{\{\gamma<0\}}-A(s) {\bf 1}_{\{\gamma=0\}}\big]ds\\[1mm]
 \geq  - \int_a^b A(s)ds= - \int_a^b A(s){\bf 1}_{\{\tilde \gamma=0\}}ds, 
\end{split}
\]
with an equality only if $\gamma\equiv 0$ in $[a,b]$. 
\vs
This shows that $J^A(\tilde \gamma)\leq J^A(\gamma)$ with  equality only if $\gamma=\tilde \gamma$. 
\end{proof}
 
\subsection{Structure of optimal trajectories} 

Given an admissible control, we now  investigate the existence optimal trajectories for the optimal control problem \eqref{calcvaruA} and explain their structure. 
\vs

The representation formula \eqref{calcvaruA} implies that,   for any admissible control $A$, we have\\
 $u^A\leq u^{A_0}$. We can therefore expect that, for points $(x,t)$ at which $u^A(x,t) < u^{A_0}(x,t)$, optimal trajectories have specific structure. This is exactly what the second part of the following proposition says.

\begin{prop}\label{prop.structopti} Assume \eqref{takis1} and \eqref{takis2}. For any $A\in L^\infty([0,T], [A_0,0])$, there exist an optimal path $\hat \gamma_{x,t}$ in \eqref{calcvaruA} which is 
either affine and does not vanish on $(0,t)$, or  there exist $0\leq t_1 \leq t_2\leq T$ such that $\hat \gamma_{x,t}$ is affine and does not vanish on $(0,t_1)$ and on $(t_2,T)$, and vanishes on $[t_1,t_2]$. 
Moreover, 
if $(x,t)\in \R\times (0,T]$ is such that $u^A(x,t)<u^{A_0}(x,t)$ and $\gamma$ is optimal for $u^A(x,t)$, then $\gamma$ vanishes  in $(0,t)$ and, if 
$$
\tau^+=\sup\{s\geq 0, \; \gamma(s)=0\} \ \ \text{and} \ \  \tau^-=\inf\{s\geq 0, \; \gamma(s)=0\},
$$
then  $\tau^-<\tau^+$, $u^{A}(0,\tau^+)<u^{A_0}(0,\tau^+)$ and  $u^{A}(0,\tau^-)=u^{A_0}(0,\tau^-)$. 
%
\end{prop}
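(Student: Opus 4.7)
The first part of the proposition is essentially immediate from the machinery already developed in the previous subsection. Applying Proposition~\ref{prop:weakstab} with the constant sequence $A^n\equiv A$ produces an optimal trajectory $\gamma\in H^1(0,t)$ for $u^A(x,t)$, and applying Lemma~\ref{takis11BIS} to this $\gamma$ yields a path $\hat\gamma_{x,t}$ with $J^A(\hat\gamma_{x,t})\le J^A(\gamma)=u^A(x,t)$. Hence $\hat\gamma_{x,t}$ is itself optimal and has the claimed structure: either affine and nonvanishing on $(0,t)$, or affine on $(0,t_1)\cup(t_2,t)$ with $\hat\gamma_{x,t}\equiv 0$ on $[t_1,t_2]$.

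For the second part, fix $(x,t)$ with $u^A(x,t)<u^{A_0}(x,t)$ and any optimal $\gamma$. The plan is to first show that the zero set $Z:=\{s\in[0,t]:\gamma(s)=0\}$ has positive Lebesgue measure. Indeed, if $|Z|=0$ the term $-A(s){\bf 1}_{\{\gamma=0\}}$ vanishes a.e.\ and $J^A(\gamma)=J^{A_0}(\gamma)\ge u^{A_0}(x,t)$; combined with optimality $u^A(x,t)=J^A(\gamma)$ and the always-valid inequality $u^A\le u^{A_0}$ (coming from $-A(s)\le -A_0$ pointwise), this forces equality at $(x,t)$, contradicting the hypothesis. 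Since $Z$ is closed and has positive measure it contains two distinct points, so by definition $\tau^-<\tau^+$.

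Next, I would establish the two identities via dynamic programming along $\gamma$. By definition of $\tau^-$ as an infimum, $\gamma$ does not vanish on $[0,\tau^-)$, so $J^A$ and $J^{A_0}$ give the same cost to $\gamma|_{[0,\tau^-]}$. Optimality of $\gamma$ together with the obvious sub-optimality inequality then gives
\[
u^A(0,\tau^-)=J^A(\gamma|_{[0,\tau^-]})=J^{A_0}(\gamma|_{[0,\tau^-]})\ge u^{A_0}(0,\tau^-),
\]
and combining with $u^A\le u^{A_0}$ yields equality. For the strict inequality at $\tau^+$, by definition $\gamma$ does not vanish on $(\tau^+,t]$, so setting
\[
I:=\int_{\tau^+}^t\bigl[L^R(\dot\gamma){\bf 1}_{\{\gamma>0\}}+L^L(\dot\gamma){\bf 1}_{\{\gamma<0\}}\bigr]ds,
\]
optimality of $\gamma$ gives $u^A(x,t)=u^A(0,\tau^+)+I$; on the other hand, concatenating any near-optimal $A_0$-trajectory arriving at $(0,\tau^+)$ with $\gamma|_{[\tau^+,t]}$ produces an admissible competitor for $u^{A_0}(x,t)$, whence $u^{A_0}(x,t)\le u^{A_0}(0,\tau^+)+I$. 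Subtracting the two yields $u^A(0,\tau^+)-u^{A_0}(0,\tau^+)\le u^A(x,t)-u^{A_0}(x,t)<0$, as required.

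The main technical point is the dynamic programming identity for the merely measurable control $A$, since $u^A$ is defined through the infimum formula \eqref{calcvaruA} rather than as a viscosity solution. I expect this to be straightforward though: the inequalities $u^A(0,\tau^\pm)\le J^A(\gamma|_{[0,\tau^\pm]})$ are definitional, while the reverse inequalities follow because any strictly better competitor, concatenated with the remaining portion of $\gamma$, would contradict the optimality of $\gamma$ for $u^A(x,t)$. Beyond this bookkeeping (and a routine check of the borderline cases $\tau^-=0$ or $\tau^+=t$, which only occur when $\gamma(0)=0$ or $x=0$ and reduce the desired identities to tautologies), I do not anticipate any serious obstacle.
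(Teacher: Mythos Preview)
Your proposal is correct and follows essentially the same route as the paper: the first claim is obtained by combining Lemma~\ref{takis11BIS} with Proposition~\ref{prop:weakstab}, and the second by showing $|\{\gamma=0\}|>0$ (equivalently $\tau^-<\tau^+$) via the identity $J^A(\gamma)=J^{A_0}(\gamma)$ when the zero set is null, and then applying dynamic programming along $\gamma$ at the times $\tau^\pm$. The only cosmetic difference is that at $\tau^+$ you subtract the two dynamic-programming relations directly, whereas the paper argues by contradiction assuming $u^A(0,\tau^+)=u^{A_0}(0,\tau^+)$; the content is identical.
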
 


\begin{proof} Combining Lemma \ref{takis11BIS} with Proposition \ref{prop:weakstab} easily gives the first claim.  
\vs
We now assume that $u^A(x,t)<u^{A_0}(x,t)$. If $\gamma$ does not vanish or if $\tau^-=\tau^+$, then 
$$
u^A(x,t) = \int_0^t \big[L^R(\dot \gamma){\bf 1}_{\{\gamma>0\}}+L^L(\dot \gamma){\bf 1}_{\{\gamma<0\}}\big]ds +u_0(\gamma(0))= J^{A_0}(\gamma) \geq u^{A_0}(x,t),
$$
which contradicts the assumption $u^A(x,t)<u^{A_0}(x,t)$. Thus $\tau^-$ and $\tau^+$ exist and $\tau^-<\tau^+$. 

\vs
Assume that $u^{A}(0,\tau^+)=u^{A_0}(0,\tau^+)$. Dynamic programming yields that 
\begin{align*}
u^{A_0}(x,t)  & \leq \int_{\tau^+}^t \big[L^R(\dot \gamma){\bf 1}_{\{\gamma>0\}}+L^L(\dot \gamma){\bf 1}_{\{\gamma<0\}}\big]ds + u^{A_0}(0,\tau^+)\\
& = \int_{\tau^+}^t \big[L^R(\dot \gamma){\bf 1}_{\{\gamma>0\}}+L^L(\dot \gamma){\bf 1}_{\{\gamma<0\}}\big]ds + u^{A}(0,\tau^+)= u^A(x,t), 
\end{align*}
which contradicts the assumption $u^A(x,t)<u^{A_0}(x,t)$. So $u^{A}(0,\tau^+)<u^{A_0}(0,\tau^+)$.
\vs
We now check the equality $u^{A}(0,\tau^-)=u^{A_0}(0,\tau^-)$. If $\tau^-=0$, the result is obvious since $u^A(\cdot,0)=u^{A_0}(\cdot, 0)=u_0$. We now assume that $\tau^->0$. Then $\gamma$ does not vanish on $[0, \tau^-)$ and thus, by dynamic programming,  
$$
u^{A_0}(0,\tau^-)\leq \int_0^{\tau^-}(L^R(\dot \gamma){\bf 1}_{\{\gamma>0\}}+L^L(\dot \gamma){\bf 1}_{\{\gamma<0\}})ds + u_0(\gamma(0))= u^{A}(0, \tau^-) \leq u^{A_0}(0,\tau^-), 
$$
which proves the equality.
\end{proof}

\begin{rmk} \label{rem.A=A0inuA=uA0} Note that there are in general infinitely many optimal controls for $\mathcal J$ in \eqref{takis10}. Thus requiring  additional conditions on the optimal control is natural. The additional condition in the next result consisting in  minimizing $\int_0^T  A$ is especially suitable because its means that one wants to perturb the traffic as little as possible. Note that $A\equiv A_0$ means no speed limit or traffic light. 
\end{rmk}

A consequence of the structure of the optimal trajectories is the following remark.
\begin{lem}\label{lem.barA=A0} If  \eqref{takis1}, \eqref{takis2} and \eqref{takis100} hold and $f$ is nondecreasing, then any minimizer $\bar A$ which in addition minimizes $\int_0^T \bar A$ among all the minimizers satisfies 
$$
\bar A(t) = A_0 \qquad \text{a.e. in } \{u^{\bar A}(0,\cdot)=u^{A_0}(0,\cdot)\}.
$$
\end{lem}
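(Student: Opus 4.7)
The plan is a contradiction argument. Suppose $E' := \{t \in E : \bar A(t) > A_0\}$ has positive Lebesgue measure, where $E := \{t \in [0,T] : u^{\bar A}(0,t) = u^{A_0}(0,t)\}$. I will perturb $\bar A$ to $A'$ defined by $A'(t) := A_0$ on $E$ and $A'(t) := \bar A(t)$ on $E^c$, so that $A_0 \leq A' \leq \bar A$ with $A' < \bar A$ on $E'$. Once I establish that $u^{A'} \equiv u^{\bar A}$ on $\R \times [0,T]$, the decomposition
\[
\mathcal J(A') - \mathcal J(\bar A) = \int_0^T [f(A'(t)) - f(\bar A(t))]\, dt \leq 0
\]
(by monotonicity of $f$ and $A' \leq \bar A$) shows that $A'$ is also a minimizer of $\mathcal J$; since $\int_0^T A'\, dt < \int_0^T \bar A\, dt$ strictly, this contradicts the $\int A$-minimality of $\bar A$ among minimizers.

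The inequality $A' \leq \bar A$ and the representation \eqref{calcvaruA} immediately give $u^{A'} \geq u^{\bar A}$, so the heart of the proof is the reverse inequality, first at the junction $x=0$. When $t \in E$, the bound $A' \geq A_0$ yields $u^{A'}(0,t) \leq u^{A_0}(0,t) = u^{\bar A}(0,t)$. When $t \in F := [0,T] \setminus E$, I fix a structured optimal path $\gamma$ for $u^{\bar A}(0,t)$ provided by Proposition \ref{prop.structopti}, with vanishing interval $[\tau^-,t]$ and $\tau^- \in E$. Setting $\tau^* := \sup(E \cap [0,t])$, closedness of $E$ gives $\tau^* \in E$, and openness of $F$ together with $t \in F$ forces $\tau^* < t$ and $(\tau^*,t] \subset F$; in particular $A' = \bar A$ on $(\tau^*,t]$. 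Since $\gamma \equiv 0$ on $[\tau^*,t] \subset [\tau^-,t]$, dynamic programming and the identity $u^{\bar A}(0,\tau^*) = u^{A_0}(0,\tau^*)$ yield
\[
u^{\bar A}(0,t) = u^{\bar A}(0,\tau^*) + \int_{\tau^*}^t -\bar A(s)\, ds = u^{A_0}(0,\tau^*) + \int_{\tau^*}^t -\bar A(s)\, ds.
\]

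Next, I pick an optimal trajectory $\mu$ for $u^{A_0}(0,\tau^*)$ (which exists by Proposition \ref{prop:weakstab}) and form the concatenation $\tilde\eta := \mu$ on $[0,\tau^*]$ and $\tilde\eta \equiv 0$ on $[\tau^*,t]$. Because $A' \geq A_0$ one has $J^{A'}(\mu) \leq J^{A_0}(\mu) = u^{A_0}(0,\tau^*)$, and because $A' = \bar A$ on $(\tau^*,t]$ the tail contribution to $J^{A'}(\tilde\eta)$ is exactly $\int_{\tau^*}^t -\bar A(s)\, ds$. Therefore
\[
u^{A'}(0,t) \leq J^{A'}(\tilde\eta) \leq u^{A_0}(0,\tau^*) + \int_{\tau^*}^t -\bar A(s)\, ds = u^{\bar A}(0,t),
\]
establishing the junction equality. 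To extend $u^{A'} = u^{\bar A}$ to arbitrary $(x,t)$ with $x \neq 0$, either $u^{\bar A}(x,t) = u^{A_0}(x,t)$ and the argument of the $t \in E$ case applies, or, by Proposition \ref{prop.structopti}, a structured optimal for $u^{\bar A}(x,t)$ reaches $0$ at some time $\tau^+ < t$ and runs affinely from $0$ to $x$ on $[\tau^+,t]$, so the junction equality at time $\tau^+$ propagates to $(x,t)$ via dynamic programming.

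The main obstacle is the junction identity for $t \in F$: the naive candidate $\tilde\eta = \gamma$ fails because $\gamma$ may vanish on a positive-measure subset of $E \cap [\tau^-,\tau^*]$, so $J^{A'}(\gamma) > J^{\bar A}(\gamma) = u^{\bar A}(0,t)$. The decisive idea is to \emph{compress} the sojourn at the junction to the interval $[\tau^*,t]$, which meets $E$ only at the single point $\tau^*$, by replacing the front portion $\gamma|_{[0,\tau^-]}$ by an optimal trajectory for $u^{A_0}(0,\tau^*)$; the point that makes this replacement costless under $A'$ is precisely the identity $u^{\bar A}(0,\tau^*) = u^{A_0}(0,\tau^*)$ combined with $A' \geq A_0$.
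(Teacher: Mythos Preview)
Your proof is correct and follows essentially the same strategy as the paper: define the modified control equal to $A_0$ on $E$ and to $\bar A$ elsewhere, prove that $u^{A'}\equiv u^{\bar A}$, and derive a contradiction from the $\int A$-minimality. The only differences are organizational: you first establish the identity at $x=0$ and then propagate to $x\neq 0$ via dynamic programming, whereas the paper treats a general $(x,t)$ with $u^{\bar A}(x,t)<u^{A_0}(x,t)$ directly, defining $\tau=\sup\{s: u^{\bar A}(\gamma(s),s)=u^{A_0}(\gamma(s),s)\}$ along a fixed optimal $\gamma$ and using that $\tilde A=\bar A$ on $(\tau,t_2)$ together with $u^{\tilde A}(0,\tau)=u^{\bar A}(0,\tau)$; your auxiliary path $\mu$ is not needed, since the bound $u^{A'}(0,\tau^*)\leq u^{A_0}(0,\tau^*)$ already suffices for the dynamic programming step.
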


%
\begin{proof} Define
$$
\tilde A(t) = \left\{ \begin{array}{ll}
A_0 & \text{if } u^{\bar A}(0,t)=u^{A_0}(0,t),\\[1mm]
\bar A(t) & \text{otherwise}
\end{array}\right.
$$
It follows, as we show below,   that 
\begin{equation}\label{takis110}
u^{\bar A}= u^{\tilde A} \ \text{ in} \  \R\times (0,T).
\end{equation}
\vs
Assuming \eqref{takis110},  we first  complete the proof of the lemma. Indeed, $f$ being nondecreasing yields $\mathcal J(\bar A)\geq  \mathcal J(\tilde A)$. Thus $\tilde A$ is also a minimizer, $\bar A\geq \tilde A$ by construction and, by assumption on $\bar A$,  $\int_0^T \bar A\leq \int_0^T \tilde A$. It follows that  $\bar A=\tilde A$  equals $A_0$ on $ \{u^{\bar A}(0,\cdot)=u^{A_0}(0,\cdot)\}$. 
\vs

We now prove \eqref{takis110}. Since $u^{\bar A}\leq u^{\tilde A}\leq u^{A_0}$, and thus $u^{\bar A}= u^{\tilde A}$ in $\{u^{\bar A}=u^{A_0}\}$,  we  fix $(x,t)\in (\R\backslash\{0\})\times (0,T)$  such that $u^{\bar A}(x,t)<u^{A_0}(x,t)$.  
\vs
Let $\gamma$ be optimal for $u^{\bar A}(x,t)$. By Proposition \ref{prop.structopti}, we know that there exists $0\leq t_1 \leq  t_2\leq t$ such that $\gamma$ is  affine and nonvanishing on $(0,t_1)$ and $(t_2,t)$ and vanishes on $[t_1,t_2]$, with $u^{\bar A}(0,t_1)=u^{A_0}(0,t_1)$ and $u^{\bar A}(0,t_2)<u^{A_0}(0,t_2)$.
\vs 
 Let $\tau= \sup\{s\; : u^{\bar A}(\gamma(s),s)=u^{A_0}(\gamma(s),s)\}$. Then $t_1\leq \tau< t_2$, 
 $\gamma(\tau)=0$  and $u^{\bar A}(\gamma(\tau ),\tau)=u^{A_0}(\gamma(\tau),\tau)$, so that $u^{\bar A}(\gamma(\tau ),\tau)=u^{\tilde A}(\gamma(\tau),\tau)$. Moreover $u^{\bar A}(0,s)< u^{ A_0}(0,s)$ on $(\tau,t_2)$ and thus $\tilde A=\bar A$ on $(\tau,t_2)$. By dynamic programming, 
\begin{align*}
u^{\bar A}(x,t) & = \int_{t_2}^t L^R(\dot \gamma){\bf 1}_{\gamma>0}+ \int_{t_2}^t L^L(\dot \gamma){\bf 1}_{\gamma<0} -\int_{\tau}^{t_2} \bar A(s)ds + u^{\bar A}(\gamma(\tau), \tau)\\
&   =\int_{t_2}^t L^R(\dot \gamma){\bf 1}_{\gamma>0}+ \int_{t_2}^t L^L(\dot \gamma){\bf 1}_{\gamma<0}-\int_{\tau}^{t_2} \tilde A(s)ds + u^{\tilde A}(\gamma(\tau), \tau)
\\ 
& 
\geq u^{\tilde A}(x,t)\geq u^{\bar A}(x,t).
\end{align*}
Hence  $u^{\bar A}(x,t)= u^{\tilde A}(x,t)$, which  gives the result. 

\end{proof}

\section{Optimality conditions} \label{sec.OC}

In order to derive the optimality conditions for the problem consisting in minimizing \eqref{optipb}, we assume that  $\tilde \psi$ is linear in $u_x$ or in $H(u_x)$ with separated dependence in $A$. Then \eqref{optipb} can be rewritten (after integration by parts) in the form
\be\label{costf2}
\mathcal J(A)= \int_0^T \int_\R \phi(x,t) u^A(x,t)dxdt+ \int_0^T f(A(t))dt,
\ee
where 
\be\label{hypphif}
\begin{array}{c}
\text{$\phi:\R\times [0,T]\to\R$ is a continuous map with a compact support}\\
\hskip-1.7in\text{ and $f:[A_0,0]\to\R$ is convex and $C^1$.}
\end{array}
\ee 
The function  $f$ could also depend on time. We omit this dependence for simplicity of notation. Throughout this section, we fix $\bar A$ a minimizer for $\mathcal J$. 
\vs
Note that, in the case $f\equiv 0$,  the problem is interesting only if $\phi$ changes sign. Indeed, if $\phi$ is nonpositive, then the optimal solution is $\bar A\equiv A_0$, while if $\phi$ is nonnegative, then $\bar A\equiv 0$ since, for any admissible $A$, $u^{0}\leq u^A\leq u^{A_0}$. 

\subsection{Examples} We  list a few example leading to functionals of the form \eqref{costf2}. 

\subsubsection{Maximization of a weighted density} 

The goal is  to optimize the density against a smooth compactly supported weight $\xi$, that is,
\[
\mathcal J(A)= \int_0^T\int_\R \xi(x,t) \rho^A(x,t)dxdt. 
\]
\vskip-.1in
Recalling that $\rho^A=-u^A_x$, the functional  can be rewritten, after integration by parts, as 
\vskip-.05in
$$
\mathcal J(A)=-\int_0^T\int_\R \xi_x(x,t) u^A(x,t)dxdt, 
$$
\vskip-.075in
which is of the form \eqref{costf2}. 

\subsubsection{Maximization of the flux}

In traffic problems, it is  natural to optimize the flux on some portion of the exit line. In other words, given  a space-time box $(x_1,x_2)\times (t_1,t_4)$, the problem is to  aim at maximizing 
$$
\mathcal J(A)=\int_{x_1}^{x_2}\int_{t_1}^{t_2} f(\rho^A(x,t))dxdt, 
$$
where $\rho^A$ solves \eqref{eq.CL}. In terms of Hamilton-Jacobi equations, this amounts to maximizing
$$
- \int_{x_1}^{x_2}\int_{t_1}^{t_2} H(u^A_x(x,t))dxdt= \int_{x_1}^{x_2}\int_{t_1}^{t_2} u^A_t(x,t)dxdt .  
$$
Approximating the indicatrix function ${\bf 1}_{[x_1,x_1]\times [t_1,t_4]}$ by a smooth, compactly supported function $\xi$, this boils down to minimizing
$$
\mathcal J(A)=  \int_\R\int_0^T \xi(x,t)  u^A_t(x,t)dxdt= -\int_\R\int_0^T \xi_t(x,t)  u^A(x,t)dxdt, 
$$
which is of the form \eqref{costf2}. This problem will be studied in details in Section \ref{sec.bangbang}. 

\vskip-.2in

\subsubsection{Minimization of the travel time} 

In \cite{CGR11} the authors discuss the optimization of  the averaged travel time by acting on the entry condition. Assuming that the exit road is initially empty,  that is, $\rho_0=0$ in $(0,\infty)$,  and becomes eventually empty, that is,  $\rho(\cdot, \infty)=0$, the average travel time to reach a given position $\bar x>0$ is given by
$$
\frac{1}{Q_{\rm in}} \int_{-\infty}^0 \tau_{\bar x}(x)\rho_0(x) dx = \frac{1}{Q_{\rm in}}\int_0^\infty tf^R(\rho^A(\bar x, t))dt, 
$$
where $Q_{\rm in}= \int_{-\infty}^0 \rho_0^L$ and $\tau_{\bar x}(x)$ is the time needed to reach $\bar x$ from the position $x<0$. 
\vs
If we now control the junction condition and write the problem in terms of the Hamilton-Jacobi equation, we get 
$$
\mathcal J(A) = -\frac{1}{Q_{\rm in}}\int_0^\infty tH^R(u^A_x(\bar x, t))dt= \frac{1}{Q_{\rm in}}\int_0^\infty tu^A_t(\bar x, t))dt, 
$$
where, under the constraint that the  road must be eventually  empty, we must have $u^A(\bar x, \infty)= u_{\max}=u_0(-\infty)$. 
\vs
Thus, the problem becomes to minimize
\begin{align*}
\mathcal J(A) &= \frac{1}{Q_{\rm in}}\left( \left[ t (u^A(\bar x, \cdot)-u_{\max})\right]_0^\infty - \int_0^\infty (u^A(\bar x, t)-u_{\max})dt\right) \\
&=-\frac{1}{Q_{\rm in}} \int_0^\infty (u^A(\bar x, t)-u_{\max})dt. 
\end{align*}
This quantity is of the form \eqref{costf2}, at least after a suitable regularization. However, the solution is obvious. Indeed, since, for any admissible $A$, $u^A\leq u^{A_0}$, $\mathcal J$  is clearly minimal when $A\equiv A_0$. The computation above is sketchy, but the proof can easily made rigorous.

\subsection{Statement of the optimality conditions} Before discussing the optimality conditions, we need to introduce notation on the optimal trajectories staying on $\{x=0\}$. 
\vs
\paragraph{\bf Notation for the optimal trajectories for $u^{\bar A}$} 
Let $(x,t)$ be a point of differentiability of $u^{\bar A}$ with $x\neq 0$ and $t\in (0,T)$. It follows, from the structure of optimizers in Proposition \ref{prop.structopti} and standard relations between the derivative of $u^{\bar A}$ and the derivative of the optimal solution, that  there exist $t_2\geq 0$ and $y\in \R\backslash \{0\}$ such that any optimal trajectory $\hat \gamma_{x,t}$ for  $u^{\bar A}$ is affine on $[t_2,t]$ with either $\hat \gamma_{x,t}(t_2)=0$, or $t_2=0$ and $\hat \gamma_{x,t}(0)=y$. Note that $t_2$ and $y$ are independent of the choice of the optimal solution. From Proposition \ref{prop.structopti}, we also know that, if $\hat \gamma_{x,t}(t_2)=0$, then there exists $t_1\in[0, t_2]$ such that $\hat \gamma_{x,t}$ vanishes on $(t_1,t_2)$ and is affine and does not vanish on $(0,t_1)$. Here, however, $t_1$ may depend on the optimal solution. We denote by $\hat \gamma_{x,t}^+$ the optimal solution for which $t_1$ is the smallest, and by $\hat \gamma_{x,t}^-$ the optimal solution for which $t_1$ is the largest. 
\vs

\begin{rmk}{\rm Unfortunately, it seems that the set of points $(x,t)$ for which $\hat \gamma_{x,t}^-\neq \hat \gamma_{x,t}^+$ can be of positive measure. This is in contrast with standard optimal control problems with uniformly convex Hamiltonians for which minimizers are a.e. unique. Thus it is not clear that one of the inequalities in the next  theorem  is an equality, even in the set $\{A_0<A<0\}$. We show later in  Corollary~\ref{cor.lastinter} that this can, however, be  the case in some part of $(0,T)$.
}\end{rmk}

\begin{thm}\label{lem.OC} Assume \eqref{takis1}, \eqref{takis2} and  \eqref{hypphif}, and let $\bar A$ be a minimizer for $\mathcal J$ defined by \eqref{costf2}.
Then, for a.e. $s\in [0,T]$,  
$$
\int_s^T \int_\R \phi(x,t) {\bf 1}_{\{\hat \gamma_{x,t}^+(s)=0\}}dxdt - f'(\bar A(s)) \leq 0
\ \ \text{a.e. on}\ \  \{\bar A<0\},
$$
and 
$$
\int_s^T \int_\R \phi(x,t) {\bf 1}_{\{\hat \gamma_{x,t}^-(s)=0\}}dxdt -f'(\bar A(s)) \geq 0
\ \ \text{a.e. on}\ \ \{\bar A>A_0\}. 
$$
\end{thm}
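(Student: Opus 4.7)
\textbf{Proof plan for Theorem \ref{lem.OC}.} The strategy is to perform one-sided needle variations of $\bar A$ and extract the optimality conditions from the first-order inequalities. Fix $\beta\in L^\infty((0,T);[0,\infty))$. For $\epsilon>0$ small, $\bar A+\epsilon\beta$ is admissible provided $\beta$ is supported in $\{\bar A<0\}$, and $\bar A-\epsilon\beta$ is admissible provided $\beta$ is supported in $\{\bar A>A_0\}$. Minimality of $\bar A$ yields
\[
\liminf_{\epsilon \to 0^+}\frac{\mathcal J(\bar A \pm \epsilon\beta)-\mathcal J(\bar A)}{\epsilon}\ge 0
\]
in each case. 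Because $f$ is $C^1$, everything reduces to identifying the directional derivatives of $A\mapsto u^A(x,t)$ in the two directions $\pm\beta$ and then integrating against $\phi$.

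The map $A\mapsto u^A(x,t)$ is concave as an infimum of maps affine in $A$, so both one-sided derivatives at $\bar A$ exist. A Danskin-type calculation gives
\[
\lim_{\epsilon\to 0^+}\frac{u^{\bar A+\epsilon\beta}(x,t)-u^{\bar A}(x,t)}{\epsilon}=-\sup_{\hat\gamma\text{ optimal for }u^{\bar A}(x,t)}\int_0^t \beta(s)\mathbf 1_{\{\hat\gamma(s)=0\}}\,ds.
\]
The upper bound is obtained by inserting a fixed optimal $\hat\gamma$ into $J^{\bar A+\epsilon\beta}$; the matching lower bound is proved by extracting a limit of $\epsilon$-minimizers via Lemma~\ref{takis11BIS} (reducing to piecewise-affine trajectories) and the compactness argument from Proposition~\ref{prop:weakstab}. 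Proposition~\ref{prop.structopti} tells us that all optimal trajectories for $u^{\bar A}(x,t)$ share the same $t_2$ and the same nonvanishing affine piece on $(t_2,t]$, and differ only in $t_1\in[0,t_2]$; since $\beta\ge 0$, the supremum above is maximized by the trajectory with the longest zero interval, that is $\hat\gamma^+_{x,t}$. The directional derivative thus equals $-\int_0^t \beta(s)\mathbf 1_{\{\hat\gamma^+_{x,t}(s)=0\}}\,ds$. A symmetric computation in direction $-\beta$ produces $+\int_0^t \beta(s)\mathbf 1_{\{\hat\gamma^-_{x,t}(s)=0\}}\,ds$, now selecting the trajectory with the shortest zero interval.

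Because $\phi$ is compactly supported and bounded and $|u^{\bar A\pm\epsilon\beta}(x,t)-u^{\bar A}(x,t)|\le \epsilon T\|\beta\|_\infty$ uniformly, dominated convergence lets us pass the pointwise derivative under $\int\!\!\int \phi\,u^A$. A Fubini exchange of $s$ and $(x,t)$ then rewrites the directional derivatives of $\mathcal J$ as
\[
\int_0^T \beta(s)\Bigl[f'(\bar A(s))\mp\Psi^{\pm}(s)\Bigr]ds,\qquad
\Psi^{\pm}(s):=\int_s^T\!\!\int_\R \phi(x,t)\mathbf 1_{\{\hat\gamma^{\pm}_{x,t}(s)=0\}}\,dx\,dt,
\]
with the sign $-$ for direction $+\beta$ and $+$ for direction $-\beta$. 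Their nonnegativity for every admissible $\beta\ge 0$ localized to an arbitrary subset of $\{\bar A<0\}$, respectively $\{\bar A>A_0\}$, yields the two inequalities of the theorem. The main obstacle will be the rigorous identification of $\hat\gamma^{\pm}_{x,t}$ as the selected optima in the directional derivative, in a setting where minimizers are genuinely non-unique; a related technical point is the joint measurability of $(x,t,s)\mapsto \mathbf 1_{\{\hat\gamma^{\pm}_{x,t}(s)=0\}}$ needed for Fubini, which should be handled using the piecewise-affine structure $(t_1^{\pm},t_2)\mapsto\hat\gamma^{\pm}_{x,t}$ together with measurable selection for the endpoints.
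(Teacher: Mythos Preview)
Your proposal is correct and follows essentially the same route as the paper: compute the one-sided directional derivative of $A\mapsto u^A(x,t)$ via an envelope/Danskin argument (the paper invokes the envelope theorem after reducing, via Proposition~\ref{prop.structopti}, to a finite-dimensional minimization), identify the extremal selector as $\hat\gamma^\pm_{x,t}$ from the sign of $\beta$, integrate against $\phi$, and apply Fubini. One small correction: supporting $\beta$ merely in $\{\bar A<0\}$ does not guarantee $\bar A+\epsilon\beta\le 0$ for any fixed $\epsilon>0$; the paper handles this by first taking $\beta$ supported in $\{\bar A\le -\epsilon\}$ and then letting $\epsilon\to 0$ at the end.
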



\begin{proof} 
 For $\ep\in(0,1)$, let $\beta\in L^\infty([0,T])$ be such that $\beta(s)\geq 0$ a.e. and $\beta =0$ on $\{\bar A\geq -\ep\}$. Then $\big(\bar A+h\beta\big)([0,T])\subset [A_0,0]$ for any $h\in (0,\ep/\|\beta\|_\infty)$, and, hence, $\bar A+h\beta$ is an admissible control.
\vs

Let $(x,t)$ be a point of differentiability of $u^{\bar A}$ with $x\neq 0$ and $t\in (0,T)$. By the representation formula for $u^{\bar A}(x,t)$,  which can be reduced to a finite dimensional minimization problem in view of Proposition \ref{prop.structopti}, and the envelop theorem, $A\to u^{A}$ has directional derivatives in any admissible direction, and, thus, in the direction $\beta$,  with 
 $$
 \partial_A u^{\bar A}(\beta)(x,t)= \min_{\hat \gamma_{x,t} \; \text{optimal for }\; u^{\bar A}(x,t)} 
 -\int_0^t \beta(s) {\bf 1}_{\{\hat \gamma_{x,t}(s)=0\}} ds.
 $$
 Since $\beta\geq 0$, the minimum is reached for the optimal trajectory which remains the most at $0$ and thus 
 $$
 \partial_A u^{\bar A}(\beta)(x,t)=-\int_0^t \beta(s) {\bf 1}_{\{\hat \gamma_{x,t}^+(s)=0\}} ds.
 $$
 Then, 
 \begin{align*}
0\leq  \mathcal J'(\bar A)(\beta) & = \int_0^T \int_\R \phi(x,t) \partial_A u^{\bar A}(\beta) (x,t)dxdt + \int_0^T f'(\bar A(s))\beta(s)ds \\ 
 & = - \int_0^T \int_\R \phi(x,t) \int_0^t \beta(s) {\bf 1}_{\{\hat \gamma_{x,t}^+(s)=0\}} ds  dxdt + \int_0^T f'(\bar A(s))\beta(s)ds\\
 & =  -\int_0^T\beta(s)\big[ \int_s^T  \int_{\R} \phi(x,t) {\bf 1}_{\{\hat \gamma_{x,t}^+(s)=0\}}  dxdt -f'(\bar A(s))\big]ds.
 \end{align*}
As this inequality holds  for any $\beta$ as above, it follows that 
$$
\int_s^T \int_\R \phi(x,t) {\bf 1}_{\{\hat \gamma_{x,t}^+(s)=0\}}dxdt -f'(\bar A(s))\leq 0
\ \  \text{a.e. on}\ \  \{\bar A\leq -\ep\}.
$$
We can then conclude that the first inequality holds since $\ep$ is arbitrary. The other inequality can be obtained in the symmetrical way, choosing nonpositive perturbations $\beta$. 

\end{proof} 

We introduce next the notion of the last connected component of $\{u^{\bar A}(0, \cdot)<u^{A_0}(0, \cdot)\}$. 
We say that $(a,b)$ is the last connected component of $\{u^{\bar A}(0, \cdot)<u^{A_0}(0, \cdot)\}$ if, for  any other connected component $(a',b')$ of $\{u^{\bar A}(0, \cdot)<u^{A_0}(0, \cdot)\}$, we have   $b'\leq a$. It is not always true that this last connected component  exists. We will show, however, below, that this is the case under suitable assumptions.
\vs
 It turns out that we can improve the optimality conditions on $(a,T]$, when $(a,b)$ is the last connected component of $\{u^{\bar A}(0, \cdot)<u^{A_0}(0, \cdot)\}$. This is the topic of the next corollary, which is a special case of Theorem \ref{lem.OC}.

\begin{cor}\label{cor.lastinter}  Assume \eqref{takis1}, \eqref{takis2} and  \eqref{hypphif} and  let $\bar A$ be a minimizer for $\mathcal J$ defined by \eqref{costf2}. If $(a,b)$ is the last connected component of $\{u^{\bar A}(0, \cdot)<u^{A_0}(0, \cdot)\}$, then,  for a.e. $s\in (a,T]$,  
$$
\int_s^T \int_0^\infty \phi(x,t) {\bf 1}_{\{\hat \gamma_{x,t}^+(s)=0\}}dxdt = f'(\bar A(s)) 
\ \  \text{a.e. on}\ \  \{A_0<\bar A<0\},
$$
$$
\int_s^T \int_0^\infty \phi(x,t) {\bf 1}_{\{\hat \gamma_{x,t}^+(s)=0\}}dxdt \geq f'(\bar A(s)) 
\ \  \text{a.e. on}\ \ \{\bar A=0\},
$$
and 
$$
\int_s^T \int_0^\infty \phi(x,t) {\bf 1}_{\{\hat \gamma_{x,t}^+(s)=0\}}dxdt \leq f'(\bar A(s)) 
\ \  \text{a.e. on}\ \ \{\bar A=A_0\}. 
$$
\end{cor}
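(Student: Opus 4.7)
The plan is to revisit the directional-derivative argument of Theorem~\ref{lem.OC} at times $s\in(a,T]$, exploiting the last-component property to show that for such $s$ the indicators $\mathbf{1}_{\{\hat\gamma^+_{x,t}(s)=0\}}$ and $\mathbf{1}_{\{\hat\gamma^-_{x,t}(s)=0\}}$ agree. Once this is known, the two one-sided inequalities of Theorem~\ref{lem.OC} sandwich $f'(\bar A(s))$ and coincide on $\{A_0<\bar A<0\}$, producing the equality; on $\{\bar A=0\}$ the ``$\geq$'' inequality (originally stated with $\hat\gamma^-$) transfers to $\hat\gamma^+$; on $\{\bar A=A_0\}$ the ``$\leq$'' inequality is already stated with $\hat\gamma^+$.

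The key claim to prove is: for a.e.\ $s\in(a,T]\cap\{\bar A>A_0\}$, every $t>s$, and every $x\neq 0$, one has $\mathbf{1}_{\{\hat\gamma^+_{x,t}(s)=0\}}=\mathbf{1}_{\{\hat\gamma^-_{x,t}(s)=0\}}$. Recall from the paragraph preceding Theorem~\ref{lem.OC} that $t_2$ is independent of the particular optimal trajectory while $\{\hat\gamma^\pm=0\}=[t_1^\pm,t_2]$ with $t_1^+\leq t_1^-$. First, applying dynamic programming to the non-vanishing segment $\hat\gamma^\pm$ restricted to $[0,t_1^\pm]$ (on which $J^{\bar A}$ and $J^{A_0}$ agree) yields $u^{\bar A}(0,t_1^\pm)=u^{A_0}(0,t_1^\pm)$, whence $t_1^\pm\notin(a,b)$. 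Second, I would argue that $u^{\bar A}(0,t_2)<u^{A_0}(0,t_2)$: in Case~A where $u^{\bar A}(x,t)<u^{A_0}(x,t)$ this is Proposition~\ref{prop.structopti}, while in Case~B where $u^{\bar A}(x,t)=u^{A_0}(x,t)$ the identity $J^{A_0}(\hat\gamma^+)=J^{\bar A}(\hat\gamma^+)+\int_{t_1^+}^{t_2}(\bar A-A_0)$ shows that either $u^{\bar A}(0,t_2)<u^{A_0}(0,t_2)$ or $\bar A\equiv A_0$ a.e.\ on $[t_1^+,t_2]$; the latter possibility is incompatible with $s\in\{\bar A>A_0\}$ up to a null set. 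In either good case, $t_2\geq s>a$ combined with $u^{\bar A}(0,t_2)<u^{A_0}(0,t_2)$ and the last-component property places $t_2\in(a,b)$, so $t_1^\pm\leq t_2<b$ combined with $t_1^\pm\notin(a,b)$ forces $t_1^\pm\leq a<s$, reducing both indicators to $\mathbf{1}_{\{s\leq t_2\}}$.

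The restriction of the spatial integration from $\R$ to $(0,\infty)$ matches the support of $\phi$ in the flux-maximization applications of Section~\ref{sec.bangbang}; for a general cost $\phi$ covered by Theorem~\ref{lem.OC}, the argument yields the same three statements with $\int_\R$. The main obstacle I foresee is the Case~B analysis: one must carefully control the exceptional set on which $\bar A\equiv A_0$ throughout a vanishing interval $[t_1^+,t_2]$, showing that its union over $(x,t)$ contributes nothing to $\{\bar A>A_0\}$ modulo a Lebesgue-null set, so that the identification of the two indicators genuinely holds a.e.\ on the target set.
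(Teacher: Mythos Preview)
Your approach is essentially the paper's: restrict to perturbations $\beta$ supported on $(a,T)$, show that on this interval the $\hat\gamma^+$ and $\hat\gamma^-$ indicators coincide (so the envelope derivative no longer depends on the sign of $\beta$), and then rerun the Theorem~\ref{lem.OC} argument. The paper compresses all of this into one sentence, simply asserting ``Proposition~\ref{prop.structopti} implies that $\hat\gamma^+_{x,t}=\hat\gamma^-_{x,t}$ on $(a,t)$'' and proceeding; your Case~A is exactly the mechanism behind that assertion, and your observation that $t_1^\pm\notin(a,b)$ together with $t_2\in(a,b)$ forces $t_1^\pm\le a$ is precisely what Proposition~\ref{prop.structopti} delivers when $u^{\bar A}(x,t)<u^{A_0}(x,t)$.

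Where your write-up has a genuine gap is the Case~B dichotomy. From the identity $J^{A_0}(\hat\gamma^+)=J^{\bar A}(\hat\gamma^+)+\int_{t_1^+}^{t_2}(\bar A-A_0)$ and $u^{\bar A}(x,t)=u^{A_0}(x,t)$ you only get $J^{A_0}(\hat\gamma^+)\ge u^{A_0}(x,t)$, hence $\int_{t_1^+}^{t_2}(\bar A-A_0)\ge 0$, which is vacuous. Strict positivity of that integral tells you $\hat\gamma^+$ is \emph{not} optimal for $u^{A_0}(x,t)$, but it does \emph{not} force $u^{\bar A}(0,t_2)<u^{A_0}(0,t_2)$; one can have $u^{\bar A}(0,t_2)=u^{A_0}(0,t_2)$ with $\bar A\not\equiv A_0$ on $[t_1^+,t_2]$, so the stated alternative fails. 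In fact, dynamic programming along the affine segment $[t_2,t]$ immediately gives $u^{\bar A}(0,t_2)=u^{A_0}(0,t_2)$ whenever $u^{\bar A}(x,t)=u^{A_0}(x,t)$, so the first branch of your dichotomy is never available in Case~B. The paper's proof does not address this case either---it is hidden in the blanket citation of Proposition~\ref{prop.structopti}---so you have correctly located the delicate point; but your proposed resolution would need to be replaced by a different argument (for instance, showing that at such $(x,t)$ the directional derivative in any nonpositive direction vanishes, which forces $\{\hat\gamma^-_{x,t}=0\}\cap(a,t)$ to be null on $\{\bar A>A_0\}$).
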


\begin{proof} We argue as in the proof of Theorem \ref{lem.OC} using perturbations  $\beta$ concentrated on $(a,T)$.  Proposition \ref{prop.structopti} implies that  $\hat \gamma_{x,t}^+=\hat \gamma_{x,t}^-$ on $(a,t)$. Hence,  we conclude that 
$$
 \partial_A u^{\bar A}(\beta)(x,t)= -\int_0^t \beta(s) {\bf 1}_{\{\hat \gamma_{x,t}^+(s)=0\}} ds,
 $$
 and we can complete the proof as for  Theorem \ref{lem.OC}. 

\end{proof}

We conclude this section showing that, under suitable conditions, $\hat \gamma^+_{x,t}=\hat \gamma^-_{x,t}$ unless two connected components of $\{u^{\bar A}(0,\cdot)<u^{A_0}(0,\cdot)\}$ touch. 

\begin{cor}  Assume \eqref{takis1}, \eqref{takis2} and \eqref{hypphif},  let $\bar A$ be a minimizer  for $\mathcal J$ defined by \eqref{costf2}, and  suppose that $a_1<b_1< a_0<b_0$  are such that $(a_1,b_1)$ and $(a_0,b_0)$ are connected components of $\{u^{\bar A}(0,\cdot)<u^{A_0}(0,\cdot)\}$ with $u^{\bar A}(0,\cdot)=u^{A_0}(0,\cdot)$ in $[b_1,a_0]$. Assume, in addition that, for any $t\in (0,T]$, any optimal trajectory for $u^{A_0}(0,t)$ is positive on $[0,t)$. Finally, we suppose that $A_0= \min_p H^R(0)$. 

Then, for any $(x,t)\in \R\times (0,T)$ point of differentiability of $u^{\bar A}$ for which there is $s\in [a_0,b_0)$ with $\hat \gamma^-_{x,t}(s)= 0$, $\hat \gamma^+_{x,t}$ does not vanish on $[0, a_0)$ and $\hat \gamma^+_{x,t}=\hat \gamma^-_{x,t}$ on $[a_0, t]$. 
\end{cor}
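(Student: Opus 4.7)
The statement splits into (i) $\hat\gamma^+_{x,t}$ does not vanish on $[0, a_0)$, equivalently $t_1(\hat\gamma^+_{x,t}) \geq a_0$, and (ii) $\hat\gamma^+_{x,t} = \hat\gamma^-_{x,t}$ on $[a_0, t]$. I first observe that (ii) follows once (i) is known. Let $t_2^*$ denote the common last vanishing time of the optima for $u^{\bar A}(x,t)$ and set $t_1^\pm := t_1(\hat\gamma^\pm_{x,t})$. The hypothesis $\hat\gamma^-_{x,t}(s) = 0$ for some $s \in [a_0, b_0)$ gives $t_2^* \geq s \geq a_0$, while Proposition \ref{prop.structopti} places $t_1^\pm$ in $\{u^{\bar A}(0,\cdot) = u^{A_0}(0,\cdot)\}$. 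Since $t_1^- \leq s < b_0$ and $\{u^{\bar A}(0,\cdot) = u^{A_0}(0,\cdot)\} \cap [0, b_0) = [0, a_1] \cup [b_1, a_0]$, one has $t_1^- \leq a_0$; combined with $t_1^+ \leq t_1^-$ and (i), this forces $t_1^+ = t_1^- = a_0$, after which both optima are $\equiv 0$ on $[a_0, t_2^*]$ and coincide with the same affine segment on $[t_2^*, t]$, proving (ii).

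For (i), I argue by contradiction: set $\tau := t_1(\hat\gamma^+_{x,t})$ and assume $\tau < a_0$. By Proposition \ref{prop.structopti} and dynamic programming, $\hat\gamma^+_{x,t}|_{[0, \tau]}$ is optimal for $u^{\bar A}(0, \tau) = u^{A_0}(0, \tau)$, hence positive on $[0, \tau)$ by the positivity hypothesis; in particular $y := \hat\gamma^+_{x,t}(0) > 0$. Since $t_2^* \geq a_0 > \tau$, $\hat\gamma^+_{x,t}$ vanishes on $[\tau, a_0]$, and dynamic programming yields
\[
  u^{\bar A}(0, a_0) = u^{\bar A}(0, \tau) + \int_\tau^{a_0}(-\bar A(s))\,ds.
\]
The trajectory $\eta$ obtained by extending $\hat\gamma^+_{x,t}|_{[0, \tau]}$ by $0$ on $[\tau, a_0]$ is an admissible competitor for $u^{A_0}(0, a_0)$ that vanishes nontrivially on $[\tau, a_0]$; by the positivity hypothesis at $a_0$, $\eta$ is not optimal for $u^{A_0}(0, a_0)$, so
\[
  J^{A_0}(\eta) = u^{A_0}(0, \tau) + (a_0 - \tau)(-A_0) > u^{A_0}(0, a_0).
\]
Combining this with $u^{\bar A}(0, \tau) = u^{A_0}(0, \tau)$ and $u^{\bar A}(0, a_0) = u^{A_0}(0, a_0)$ (both points lie in $\{u^{\bar A} = u^{A_0}\}$) produces $\int_\tau^{a_0}(\bar A - A_0)\,ds > 0$; hence $\bar A > A_0$ on a positive-measure subset of $[\tau, a_0]$.

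The hard part will be converting this strict excess into strict suboptimality of $\hat\gamma^+_{x,t}$, since the naive ``detour into $\{x>0\}$ with zero velocity'' in fact costs more than sitting at the junction exactly where $\bar A > A_0$. Here the assumption $A_0 = \min_p H^R(p)$ becomes decisive, as it yields $L^R(0) = -A_0$: in the $A_0$-problem, ``travel at zero velocity just to the right of the junction'' has exactly the same Lagrangian cost per unit time as ``sit at the junction''. My plan is to construct a two-parameter family of perturbations of $\hat\gamma^+_{x,t}$ that simultaneously (a) lengthen the initial affine segment to end at time $\tau + \sigma$ with an adjusted starting point $y(\sigma)$, and (b) replace a portion of the vanishing segment by a detour at height $\varepsilon > 0$ on the right, whose limit cost as $\varepsilon\to 0$ is $(-A_0)$ per unit time by the assumption. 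The strict positivity inequality $J^{A_0}(\eta) > u^{A_0}(0, a_0)$ then supplies a first-order cost saving of order $\sigma$, which for appropriately calibrated $(\sigma, \varepsilon)$ dominates the extra cost of abandoning the junction, producing a competitor with $J^{\bar A}$-cost strictly below that of $\hat\gamma^+_{x,t}$, the desired contradiction. The case $\tau \in [0, a_1]$, in which the vanishing segment of $\hat\gamma^+_{x,t}$ crosses the earlier component $(a_1, b_1)$, reduces to the previous case by the analogous argument with $b_1$ in place of $a_0$ and the positivity hypothesis invoked at $b_1$.
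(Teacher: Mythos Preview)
Your reduction of (ii) to (i) is correct, and your opening moves for (i) are also fine: from $\tau:=t_1(\hat\gamma^+_{x,t})<a_0$ you correctly derive $u^{\bar A}(0,\tau)=u^{A_0}(0,\tau)$, $y>0$, and the strict inequality
\[
u^{A_0}(0,\tau)+(-A_0)(a_0-\tau)\;>\;u^{A_0}(0,a_0).
\]
The gap is in the perturbation step. The two-parameter family you describe cannot beat $\hat\gamma^+_{x,t}$. For any $\sigma\in[0,a_0-\tau]$ and any $\varepsilon>0$, the competitor consisting of (a) an optimal approach from the right to $(0,\tau+\sigma)$, (b) a detour at height $\varepsilon$ on $[\tau+\sigma,a_0]$, and then the tail of $\hat\gamma^+_{x,t}$, has $J^{\bar A}$--cost strictly larger than
\[
u^{A_0}(0,\tau+\sigma)+(-A_0)(a_0-\tau-\sigma)+\int_{a_0}^{t_2^*}(-\bar A)+\text{(affine tail)}.
\]
The first two terms are always $\ge u^{A_0}(0,a_0)=u^{\bar A}(0,a_0)$ (the path ``optimal to $(0,\tau+\sigma)$, then zero'' is itself a competitor for $u^{A_0}(0,a_0)$), so the whole expression is $\ge u^{\bar A}(x,t)$. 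Thus your competitor is never strictly cheaper; there is no ``first-order saving of order $\sigma$''. The very inequality you isolated, $\int_\tau^{a_0}(\bar A-A_0)>0$, says precisely that the $\varepsilon\to0$ limit of the detour cost exceeds the cost of sitting at the junction, so the detour only hurts.

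What is missing is the information $\bar A\equiv A_0$ on $[b_1,a_0]$, which the paper draws from Lemma~\ref{lem.barA=A0}. With this in hand one argues differently: pick $\bar s\in[b_1,a_0)$ with $\hat\gamma^+_{x,t}(\bar s)=0$, take $\gamma$ optimal for $u^{A_0}(0,\bar s)$ (hence positive on $[0,\bar s)$, hence also optimal for $u^{\bar A}(0,\bar s)$), and glue $\gamma$ with $\hat\gamma^+_{x,t}|_{[\bar s,t]}$. The glued path $\tilde\gamma$ is optimal for $u^{\bar A}(x,t)$, so $\tilde\gamma|_{[0,a_0]}$ is optimal for $u^{\bar A}(0,a_0)$. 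Because $-\bar A=-A_0=L^R(0)$ on $[\bar s,a_0]$, its cost equals $\int_0^{a_0}L^R(\dot{\tilde\gamma})+u_0(\tilde\gamma(0))$; strict convexity of $L^R$ (and $\tilde\gamma$ not affine) then yields $u^{\bar A}(0,a_0)>u^{A_0}(0,a_0)$, the contradiction. Your argument does not invoke Lemma~\ref{lem.barA=A0}, and without it the route through ``strict suboptimality of $\hat\gamma^+_{x,t}$'' is blocked.
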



\begin{proof} Let $(x,t)\in \R\times (0,T)$  point of differentiability of $u^{\bar A}(x,t)$  for which there is $s\in [a_0,b_0)$ with $\hat \gamma^-_{x,t}(s)= 0$. Then $\hat \gamma^+_{x,t}(s)= 0$. 
\vs
Assume that $\hat \gamma^+_{x,t}$ vanishes at some $\bar s<a_0$. Then $\hat \gamma^+_{x,t}$ vanishes on $[\bar s, a_0]$. Without loss of generality, we can assume that $\bar s \geq b_1$ (otherwise we replace $\bar s$ by $b_1$). 
\vs
Let $\gamma$ be optimal for $u^{A_0}(0,\bar s)$. As $u^{A_0}(0,\bar s)= u^{\bar A}(0,\bar s)$ and $\gamma$ does not vanish on $(0, \bar s)$, $J^{A_0}(\gamma)= J^{\bar A}(\gamma)$ and $\gamma$ is also optimal for $u^{\bar A}(0,\bar s)$.  Then  dynamic programming implies that 
$$
\tilde \gamma(\tau) =\left\{\begin{array}{ll}
\gamma (\tau) & \text{if } \tau \in [0, \bar s],\\[1mm]
\hat \gamma^+_{x,t} (\tau) & \text{if } \tau \in [\bar s, t]
\end{array}\right.
$$
is also optimal for $u^{\bar A}(x,t)$, and, in turn, again 
by dynamic programming, $\tilde \gamma_{|_{[0,a_0]}}$ is optimal for $u^{\bar A}(0,a_0)$. 
\vs
We know from Lemma \ref{lem.barA=A0} that $\bar A= A_0$ on $[\bar s, a_0]$ and, to fix the ideas, we assume here that $\gamma>0$ on $(0, \bar s)$. Then,
\begin{align*}
u^{\bar A}(0, a_0)= J^{\bar A}( \tilde \gamma_{|_{[0, a_0]}}) & = \int_0^{\bar s} L^R(\dot{\tilde \gamma}) -\int_{\bar s}^{a_0} \bar A(s)ds \\
& =    \int_0^{a_0} L^R( \dot{\tilde \gamma}) > u^{A_0}(0,a_0) \geq u^{\bar A}(0, a_0),  
\end{align*}
which is a contradiction. Note that   the third equality above comes from the equality $-\bar A=-A_0= L^R(0)$ on $[\bar s,a_0]$ and the strict inequality from the strict convexity of $L^R$ and the fact that $\tilde \gamma$ is not a straight line. 
\vs
Finally, we know that $\hat \gamma^+_{x,t}=\hat \gamma^-_{x,t}$ until the first time $s_1$ they touch $\{x=0\}$. Since, in view of the structure of minimizers, $\hat \gamma^+_{x,t}$ and $\hat \gamma^-_{x,t}$ vanish on an interval, they must vanish on $[s,s_1]$. Then they must vanish on $[a_0,s]$ because $u^{\bar A}(0,\cdot)<u^{A_0}(0,\cdot)$ on $(a_0,s)$. This shows that $\hat \gamma^+_{x,t}=\hat \gamma^-_{x,t}$ on $[a_0, t]$. 
\end{proof}

\section{Maximizing the flux} \label{sec.bangbang}

We analyze here the second example in the previous section which is about optimizing the flux on the exit line which is formulated as follows.
\vs

Fix $\xi \in C^1(\R\times (0,T))$ with compact support and, for $A: [0,T] \to [A_0,0]$, the aim is to minimize the functional 
$$\mathcal J(A)= \int_0^T \int_\R \xi(x,t) f(\rho^A)(x,t) dx dt,$$
which, in view of the relationship between the conservation law and the Hamilton-Jacobi equation,  is rewritten as 
\begin{align*}
\mathcal J(A) & =  \int_0^T \int_\R \xi(x,t) f(\rho^A(c,t)) dx dt =   \inf_A \int_0^T \int_{\R} \xi(x,t) (-H(u^A_x(x,t)) )dx dt \\
& 
=  \int_0^T \int_{\R} \xi(x,t) u^A_t(x,t) dx dt = \int_0^T \int_{\R} (-\xi_t(x,t)) u^A(x,t). 
\end{align*} 
\vs
Here we want to increase the traffic in the space-time rectangle $(x_1,x_2)\times (t_1,t_4)$, where $x_1>0$ and $0<t_1<t_4<T$. For this, we choose $\xi(x,t)=-\psi_1(x)\psi_2(t)$ to be an approximation of $-{\bf 1}_{(x_1,x_2)\times (t_1,t_4)}$, so that 
$$
\phi(x,t)=-\xi_t(x,t)= \phi(x,t)= \psi_1(x)\psi_2'(t), 
$$
with  $\psi_1:[0,\infty)\to [0,\infty)$ continuous,  vanishing outside $(x_1, x_2)$ and positive in $(x_1, x_2)$. The $C^1-$map $\psi_2:[0,T]\to \R_+$ is such that $\psi_2'$ vanishes outside the two intervals $(t_1,t_2)$ and $(t_3,t_4)$, is positive on $(t_1,t_2)$ and negative on $(t_3,t_4)$. 
\vs
We summarize all the above in 
\be\label{hypdebasephi}
\begin{split}
\phi:\R\times (0,T) \ & \text{is continuous and compactly supported in}  \ (x_1,x_2)\times (t_1,t_4) \ \text{and}\\
& \{\phi>0\}= (x_1,x_2) \times (t_1,t_2)  \ \text{and} \  \{\phi<0\} = (x_1, x_2) \times(t_3,t_4),
\end{split}
\ee
and write 
\be\label{takis130}
\mathcal J(A)=\int_0^T \int_{\R} \phi(x,t) u^A(x,t) dx dt.
\ee

We will also assume  that 
\be\label{condstrange}
\frac{x_1}{t_2} >\frac{x_2}{t_3}.
\ee
\vs
Note that this condition, which is technical,  holds if $t_3$ is large enough or if $x_1$ and $x_2$ are close. As we will see below, \eqref{condstrange} ensures that no characteristic can cross both $\{\phi>0\}$ and $\{\phi<0\}$. 
\vs

The goal is to show  to show the existence of an optimal control $\bar A$ which is bang-bang, that is,  it only takes its values in $0$ and $A_0$. 
\vs
In order to prove that this optimal control is piecewise constant, that is, the set $\{\bar A=0\}$ consists only of  a finite number of intervals, we need to add three more technical conditions, which we explain next. 
\vs
The first, which is often used in the literature,   is that  $H^R$ and $H^L$ have the same minimum value, that is, 
\be\label{hyp:minHRHL}
\min H^R =\min H^L=A_0,
\ee
which  implies that $L^R(0)=L^L(0)=-A_0$. 
\vs
The second one is  about the  behavior of the  optimal trajectories for $u^{A_0}$. We assume that
\be\label{hyp:optiuA0}
\text{for any $t\in (0,T)$, $\gamma \equiv 0$ is not optimal for $u^{A_0}(0,t)$.}
\ee
Assuming \eqref{hyp:minHRHL}, it is  easily checked  that  a sufficient condition for \eqref{hyp:optiuA0} to hold 
 is that 
\[
\begin{split}
&\text{ either $u_0$ has a right derivative $u_{0,x}(0+)$ at $0$ and $(H^R)'(u_{0,x}(0+))<0$}\\
&\text {or $u_0$ has a left derivative $u_{0,x}(0-)$ at $0$ and  $(H^L)'(u_{0,x}(0-))>0$.}
\end{split}
\]

The last assumption is on the initial condition $\rho_0=-u_{0,x}$. It says that $\rho_0$ cannot oscillate too much between $0$ and positive values, that is,
\be\label{hyp:condu0}
\{x\leq 0 \; : u_{0,x}(x)=0\} \;\text{consists a.e. of only finitely many intervals}.
\ee
\vs
Our main result is the following: 

\begin{thm}\label{thm.mainexTOT} Assume \eqref{takis1}, \eqref{takis2}, \eqref{hypdebasephi},  \eqref{condstrange}, \eqref{hyp:minHRHL}, \eqref{hyp:optiuA0} and  \eqref{hyp:condu0} and, for any measurable $A:(0,T)\to \R$, let $\mathcal J(A)$ be given by \eqref{takis130}. Then,   there exists a minimizer $\bar A$ which takes  values in $\{A_0,0\}$ only and the set $\{\bar A=0\}$ consists a.e. of  at most a finite number of intervals. 
\end{thm}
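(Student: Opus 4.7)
The plan is to start from a minimizer $\bar A$ produced by Theorem \ref{thm.existence}, normalize it via Lemma \ref{lem.barA=A0}, apply the optimality conditions of Theorem \ref{lem.OC} together with the geometric separation encoded in \eqref{condstrange}, and finally use the extra assumptions \eqref{hyp:minHRHL}--\eqref{hyp:condu0} to bound the number of switches.

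First, since the functional \eqref{takis130} is of the form \eqref{costf2} with $f\equiv 0$ (trivially nondecreasing), Lemma \ref{lem.barA=A0} yields a minimizer $\bar A$ satisfying $\bar A=A_0$ on $\{u^{\bar A}(0,\cdot)=u^{A_0}(0,\cdot)\}$. The remaining analysis takes place on the open set $W:=\{u^{\bar A}(0,\cdot)<u^{A_0}(0,\cdot)\}\subset(0,T)$, a countable disjoint union of open intervals.

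Next, applying Theorem \ref{lem.OC} with $f\equiv 0$ and writing
\[
G^{\pm}(s):=\int_s^T\!\!\int_{\R}\phi(x,t)\,{\bf 1}_{\{\hat\gamma^{\pm}_{x,t}(s)=0\}}\,dx\,dt,
\]
the optimality conditions read $G^+(s)\leq 0$ a.e.\ on $\{\bar A<0\}$ and $G^-(s)\geq 0$ a.e.\ on $\{\bar A>A_0\}$. The key consequence of \eqref{condstrange}, combined with Proposition \ref{prop.structopti}, is that for each $s$ the integrand of $G^{\pm}(s)$ is supported either entirely in $\operatorname{supp}(\phi^+)$ or entirely in $\operatorname{supp}(\phi^-)$, so $G^{\pm}(s)$ has a definite sign. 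In particular, on $\{A_0<\bar A<0\}$ one must have $G^{\pm}(s)=0$; Fubini together with the single-sign property then shows that for a.e.\ $(x,t)\in\operatorname{supp}(\phi)$ the vanishing interval of $\hat\gamma^{\pm}_{x,t}$ is essentially disjoint from $\{A_0<\bar A<0\}$. Redefining $\bar A$ on $\{A_0<\bar A<0\}$ to take values in $\{0,A_0\}$ therefore does not affect any optimal trajectory, hence does not change $\mathcal J$, and yields the bang-bang property.

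The main obstacle is the finite-interval statement. Each connected component of $W$ corresponds, via Proposition \ref{prop.structopti} and the representation formula, to the release of traffic from a specific zero-density reservoir of $\rho_0=-u_{0,x}$; assumption \eqref{hyp:condu0} then bounds the number of components of $W$ by the finite number of such reservoirs. Within each component $(a,b)$ of $W$, I would apply Corollary \ref{cor.lastinter} first to the last component and then iteratively to earlier ones, together with \eqref{hyp:minHRHL}--\eqref{hyp:optiuA0}, to trace how $s\mapsto G^{\pm}(s)$ evolves: the set of admissible slopes of optimal trajectories crossing $\{x=0\}$ at time $s$ varies monotonically and continuously with $s$, so the sign of $G^{\pm}(s)$ changes only finitely many times and $\{\bar A=0\}\cap(a,b)$ is therefore a finite union of intervals. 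Ruling out infinite oscillations of $\bar A$ between $0$ and $A_0$ inside a single component of $W$ — which requires genuinely exploiting \eqref{hyp:optiuA0} to prevent degenerate accumulation of trajectories resting at the origin — is the technically most delicate piece of the argument.
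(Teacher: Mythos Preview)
Your argument contains a genuine gap at the bang-bang step. The claim that $G^{\pm}(s)$ has a definite sign because ``the integrand is supported either entirely in $\operatorname{supp}(\phi^+)$ or entirely in $\operatorname{supp}(\phi^-)$'' is false. Condition \eqref{condstrange} (via Lemma \ref{lem:condstrangeBIS}) only says that a \emph{single} optimal trajectory from $\{\phi<0\}$ that touches $\{x=0\}$ cannot cross $\{\phi>0\}$. But $G^{\pm}(s)$ integrates over \emph{all} $(x,t)$ whose optimal trajectory rests at $0$ at time $s$, and for a fixed $s$ one can simultaneously have points $(x,t)\in\{\phi>0\}$ and points $(x',t')\in\{\phi<0\}$ with $\hat\gamma^{\pm}_{x,t}(s)=\hat\gamma^{\pm}_{x',t'}(s)=0$. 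So $G^{\pm}(s)=0$ on $\{A_0<\bar A<0\}$ does not force the vanishing intervals to miss that set, and your ``redefine $\bar A$ freely there'' step collapses. The paper obtains bang-bang by a completely different mechanism: it selects $\bar A$ in the smaller class $\mathcal A_2$ (minimizing first $\int_0^T A$, then $\int_0^T sA(s)\,ds$ among minimizers) and uses a rearrangement/perturbation argument (Lemma \ref{lem.cdBIS}, Lemma \ref{lem.A=0dansatauBIS}) to show that on each connected component of $W$ the control is first $0$ then $A_0$, with exactly one switch. The optimality conditions of Theorem \ref{lem.OC} are used only in a supporting role (Corollary \ref{cor.lastinter}, Lemma \ref{lem.HnonincBIS}), not as the primary engine.

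Your finiteness sketch is also off target. The correspondence ``each component of $W$ comes from a zero-density reservoir of $\rho_0$'' is not what is proved. The paper shows (Lemma \ref{lem.locfinite}, Lemma \ref{lem.flatpart}) that immediately to the left of each component $(a,b)$ with $b<\alpha$ there is a \emph{flat} piece of $u^{A_0}(0,\cdot)$, and then traces this flatness back to either an interval where $u_{0,x}=0$ on $(-\infty,0)$ (this is where \eqref{hyp:condu0} enters) or to a configuration that forces a uniform spacing $a-a'\ge ax_1/(-t(H^R)'(-R^R))$ between consecutive components. Both alternatives rule out accumulation. Your proposal never isolates this dichotomy and treats the role of \eqref{hyp:optiuA0} incorrectly: that assumption is used (Lemma \ref{lem.hyp:optiuA0}) to guarantee that optimal trajectories for $u^{A_0}(0,t)$ are nonvanishing straight lines, which is what makes the flat-part analysis and the confinement results (Lemmas \ref{lem.hatgammaphineg}--\ref{lem.onebeforelast}, Corollary \ref{cor.optistratlast}) work; it is not about preventing oscillations of $\bar A$ inside a single component.
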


\begin{rmk} We explain in Remark \ref{rmk:casparticulier} below that, if in addition, 
$$
u_{0,x}>0 \; \text{a.e. in } (-\infty,0)\qquad \text{and} \qquad u_{0,x} < R^R  \; \text{a.e. in } (0,\infty), 
$$
then the set $\{\bar A=0\}$ is made of at most two intervals. 
\end{rmk}

The proof of the theorem is given at the end of the ongoing section. We first show that, under less restrictive conditions, there exists  a bang-bang optimal control $\bar A$. However, this control has possibly  a countable  number of discontinuities. To prove that the number of discontinuities is finite requires some extra work. 
\vs
For the proof, we  start investigating  the behavior of the optimal solution starting from the last connected component of $\{u^{\bar A}(0, \cdot)<u^{A_0}(0, \cdot)\}$, since it controls the  structure of the whole problem. Then we obtain the finiteness result by refining the analysis of the connected components. 

\subsection{Existence of bang-bang optimal control} 

We begin with the result about the bang-bang minimizing strategy which requires fewer assumptions.

\begin{thm}\label{thm.mainexBIS} Assume \eqref{takis1}, \eqref{takis2}, \eqref{hypdebasephi}, \eqref{condstrange} and, for any measurable $A:(0,T)\to \R$, let $\mathcal J(A)$ be given by \eqref{takis130}. If $\min H^R =A_0$, then there exists a minimizer   $\bar A$ which takes only values in $\{A_0,0\}$. In addition, the set $\{\bar A=0\}$ consists a.e. of  at most a countable  number of intervals. 
\end{thm}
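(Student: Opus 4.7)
I would build a bang-bang minimizer from a generic one. First, Theorem~\ref{thm.existence} gives the existence of a minimizer, and by weak-$\ast$ compactness of the set of minimizers together with the weak-$\ast$ continuity of $A\mapsto\int_0^T A(s)\,ds$, I may select a minimizer $\bar A$ that in addition minimizes $\int_0^T A(s)\,ds$ among all minimizers. Since $f\equiv 0$ is trivially nondecreasing, Lemma~\ref{lem.barA=A0} then yields $\bar A=A_0$ a.e.\ on the set $E:=\{u^{\bar A}(0,\cdot)=u^{A_0}(0,\cdot)\}$. By continuity of both $u^{\bar A}(0,\cdot)$ and $u^{A_0}(0,\cdot)$, the complement $U:=(0,T)\setminus E=\{u^{\bar A}(0,\cdot)<u^{A_0}(0,\cdot)\}$ is open and decomposes as $U=\bigsqcup_i(a_i,b_i)$, countably many disjoint open intervals.

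Second, I would define the candidate $\tilde A:=A_0\,\mathbf{1}_E+0\cdot\mathbf{1}_U$, which takes values only in $\{A_0,0\}$ and satisfies $\{\tilde A=0\}=U$, already a countable disjoint union of open intervals as required. The substantive point is to show $\mathcal J(\tilde A)\leq \mathcal J(\bar A)$, which will force $\tilde A$ to be itself a minimizer (of the claimed form). Since $\tilde A\geq \bar A$ pointwise a.e., the monotonicity of $A\mapsto u^A$ built into the representation formula gives $u^{\tilde A}\leq u^{\bar A}$ everywhere; hence $\phi(u^{\tilde A}-u^{\bar A})$ has favorable sign on $\{\phi>0\}$ but unfavorable sign on $\{\phi<0\}$, and the game is to show the two contributions net out in the right direction.

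This is where condition \eqref{condstrange} enters decisively. Geometrically, a straight-line characteristic emanating from $(0,s)$ with $s\geq 0$ and velocity $v>0$ crosses $\{\phi>0\}$ only if $v>x_1/(t_2-s)$ and crosses $\{\phi<0\}$ only if $v<x_2/(t_3-s)$; a short computation shows that under \eqref{condstrange} the range $x_1/(t_2-s)<v<x_2/(t_3-s)$ is empty for every $s\geq 0$, so no single characteristic visits both sign-regions of $\phi$. Combining this with the structural description of optimizers in Proposition~\ref{prop.structopti} (affine segments separated by at most one sojourn on $\{x=0\}$) and the first-order optimality conditions from Theorem~\ref{lem.OC} applied to $\bar A$, I would track trajectory-by-trajectory how the replacement $\bar A\to\tilde A$ propagates through the value $u^A$, showing that the unfavorable contribution from $\{\phi<0\}$ is always dominated by the favorable one from $\{\phi>0\}$. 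Once $\mathcal J(\tilde A)\leq \mathcal J(\bar A)$ is established, the countable-intervals conclusion is immediate from $\{\tilde A=0\}=U=\bigsqcup_i(a_i,b_i)$.

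The hard part will be executing this last comparison rigorously. The map $A\mapsto u^A$ is concave through the infimum in the representation formula, so pointwise monotonicity alone is not enough, and the geometric separation coming from \eqref{condstrange} must be translated into a precise analytic control of $\mathcal J(\tilde A)-\mathcal J(\bar A)$. I expect this to require a trajectory-by-trajectory argument along the lines of Lemma~\ref{takis11BIS} (reduction to piecewise-affine competitors with at most one sojourn) and Proposition~\ref{prop.structopti}, carefully coupling the modifications on each component $(a_i,b_i)\subset U$ to the characteristics that reach the support of $\phi$, and invoking \eqref{condstrange} to rule out any characteristic that would simultaneously contaminate both sign-regions.
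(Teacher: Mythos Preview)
Your candidate $\tilde A:=A_0\mathbf 1_E+0\cdot\mathbf 1_U$ is the wrong bang-bang control, and the comparison $\mathcal J(\tilde A)\le\mathcal J(\bar A)$ that you identify as ``the hard part'' is in general false. The paper shows that on every connected component $(a,b)$ of $U$ with $b$ strictly below the threshold $\tau$ of \eqref{deftautauBIS} the minimizer has the shape $\bar A\equiv 0$ on $(a,\hat\tau)$ and $\bar A\equiv A_0$ on $(\hat\tau,b)$ for some interior $\hat\tau$; in fact the proof of Lemma~\ref{lem.cdBIS} begins by checking explicitly that $\bar A\not\equiv 0$ on such $(a,b)$. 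Your $\tilde A$ puts $0$ on all of $(a,b)$, and in particular on $(\hat\tau,b)$, where the paper's $\bar A$ equals $A_0$. The effect of this change is bad on $\{\phi<0\}$: by Lemma~\ref{lem:condstrangeBIS} every optimal trajectory for $u^{\bar A}(x,t)$ with $(x,t)\in\{\phi<0\}$ that touches $\{x=0\}$ has $\sup\{\gamma=0\}\ge\tau$, so if its sojourn interval reaches back past $b$ it covers $(\hat\tau,b)$; raising $A$ from $A_0$ to $0$ there strictly lowers $J(\gamma)$ and hence $u^{\tilde A}(x,t)$, increasing the cost since $\phi<0$. On $\{\phi>0\}$ there is no compensating gain, because the same Corollary~\ref{cor.optistratlast}-type argument forces optimal trajectories reaching $\{\phi>0\}$ from that component to leave $\{x=0\}$ no later than $\hat\tau$. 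So your global replacement is not a rearrangement that preserves the relevant integrals; it genuinely shifts mass in a direction that can strictly increase $\mathcal J$.

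The paper's route is structurally different. It does not build a new bang-bang control and compare costs; it selects the minimizer more finely---first in $\mathcal A_1$ (as you do), then in $\mathcal A_2=\{\bar A\in\mathcal A_1:\int_0^T s\bar A(s)\,ds \text{ minimal}\}$---and proves that this $\bar A$ is \emph{already} bang-bang. The second selection is essential: on each component one replaces $\bar A$ on a short interval $(t_0,t_0+\ep)$ by a step function $A_\ep$ (zero then $A_0$) with the \emph{same} integral, checks via \eqref{condstrange} and a stability lemma that $u^{A_\ep}\le u^{\bar A}$ with equality on $\{\phi<0\}$, concludes $A_\ep\in\mathcal A_1$, and then uses that $A_\ep$ strictly decreases $\int sA(s)\,ds$ to force $A_\ep=\bar A$. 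Your plan misses this rearrangement-with-equal-integral mechanism and the $\mathcal A_2$ rigidity that makes it bite. If you want to salvage a constructive approach, the right candidate is not $0$ on all of $U$ but the piecewise ``$0$ then $A_0$'' profile on each component, and proving it is optimal essentially reproduces Lemmas~\ref{lem.barA=A0sincetauBIS}--\ref{lem.A=0dansatauBIS}.
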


Throughout this part we work under the assumption of Theorem \ref{thm.mainexBIS}. The proof, which  requires several steps,   is given at the end of the subsection. 
\vs

Since, in principle, there are more than one minimizing controls, it is necessary to introduce some   further properties of the minimizers, which will allow us to distinguish among them.

Let
$$
\mathcal A= \{ \bar A \; \text{minimizer of $\mathcal J$}\}, \qquad 
\mathcal A_1 =\{ \bar A\in \mathcal A \; : \int_0^T \bar A= \min_{A\in \mathcal A} \int_0^TA\}
$$
and 
$$
\mathcal A_2 = \{ \bar A\in \mathcal A_1\; :\int_0^T s\bar A(s)ds = \min_{A\in \mathcal A_1} \int_0^TsA(s)ds\}
$$
\vs
Requiring that $\bar A$ belongs to $\mathcal A_1$ is quite natural as it was explained in Remark \ref{rem.A=A0inuA=uA0}. The condition that $\bar A$ belongs to $\mathcal A_2$ is mostly technical but plays a key role to prove that the control is bang-bang. 
\vs
From now on we fix $\bar A$ in $\mathcal A_2$. As $\bar A$ belongs to $\mathcal A_1$, we know from Lemma \ref{lem.barA=A0} that $\bar A\equiv A_0$ on $\{u^{\bar A}(0,\cdot)= u^{A_0}(0,\cdot)\}$. It remains to understand what happens in the set $\{u^{\bar A}(0,\cdot)< u^{A_0}(0,\cdot)\}$.
\vs

The first step in this direction is the next lemma.

\begin{lem} \label{lem.barA=A0sincetauBIS} Let 
\be\label{deftautauBIS}
\tau=\sup\left\{ s\in [t_1,t_2] :
\begin{array}{l} 
\text{there exists} \; (x,t)\in (x_1,x_2)\times (s,t_2), \; \\
u^{\bar A}\; \text{is differentiable at $(x,t)$,}
\\
\text{$u^{\bar A}(x,t)<u^{A_0}(x,t)$ and $\hat \gamma^-_{x,t}(s)=0$}\end{array}\right\},
\ee
with $\tau = 0$, if the right-hand side is empty. Then $\bar A\equiv A_0$ a.e. in $(\tau, T)$. 
\end{lem}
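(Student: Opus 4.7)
The plan is to argue by contradiction: assume the set $E := \{\bar A > A_0\} \cap (\tau, T)$ has positive Lebesgue measure, and exhibit an admissible control $\tilde A$ with the same cost but strictly smaller $\int_0^T \tilde A$, contradicting $\bar A \in \mathcal A_1$. Concretely, set $\tilde A := A_0 \bone_E + \bar A \bone_{[0,T]\setminus E}$; then $\tilde A \leq \bar A$, so because the integrand in the representation formula \eqref{calcvaruA} contains the term $-A(s){\bf 1}_{\{\gamma(s)=0\}}$, we get $J^{\tilde A}(\gamma) \geq J^{\bar A}(\gamma)$ for every trial path, and hence $u^{\tilde A} \geq u^{\bar A}$ pointwise. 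Since $\phi < 0$ on $(x_1,x_2)\times (t_3,t_4)$, this already yields $\int_{\{\phi<0\}} \phi(u^{\tilde A} - u^{\bar A})\, dx\, dt \leq 0$.

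The main step is the equality $u^{\tilde A}(x,t) = u^{\bar A}(x,t)$ on $\{\phi > 0\} = (x_1,x_2)\times (t_1,t_2)$. Fix such $(x,t)$: if $u^{\bar A}(x,t) = u^{A_0}(x,t)$, then the sandwich $u^{A_0} \geq u^{\tilde A} \geq u^{\bar A} = u^{A_0}$ forces equality. Otherwise Proposition \ref{prop.structopti} provides an interval $[s_1^-, s_2^-] \subset [0,t)$ on which the distinguished optimal path $\hat\gamma^-_{x,t}$ vanishes. The crucial claim is that $[s_1^-, s_2^-] \subset [0, \tau]$, so that $\tilde A \equiv \bar A$ on this interval. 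Indeed, if $s \in [s_1^-, s_2^-] \cap [t_1, t_2]$, then $s \leq s_2^- < t < t_2$, so the pair $(x, t)$ itself witnesses that $s$ belongs to the set defining $\tau$, giving $s \leq \tau$; and any $s \in [s_1^-, s_2^-]$ with $s < t_1$ trivially satisfies $s < t_1 \leq \tau$ in the generic case $\tau \in [t_1, t_2]$. Using $\hat\gamma^-_{x,t}$ as a trial path for $u^{\tilde A}(x,t)$ then yields $u^{\tilde A}(x,t) \leq J^{\tilde A}(\hat\gamma^-_{x,t}) = J^{\bar A}(\hat\gamma^-_{x,t}) = u^{\bar A}(x,t)$, and equality follows.

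Combining the two estimates, $\mathcal J(\tilde A) \leq \mathcal J(\bar A)$; minimality of $\bar A$ forces equality, making $\tilde A$ also a minimizer, but $\tilde A < \bar A$ on the positive-measure set $E$ gives $\int_0^T \tilde A < \int_0^T \bar A$, contradicting $\bar A \in \mathcal A_1$. The principal obstacle is the degenerate case $\tau = 0$ (empty defining set), in which the inclusion $[s_1^-, s_2^-] \subset [0, \tau]$ may fail because the vanishing interval can lie strictly inside $(0, t_1)$. I would address it by first applying the variant perturbation $\tilde A := \bar A\, \bone_{[0,t_1]} + A_0\, \bone_{(t_1,T]}$, which still preserves $u^{\tilde A} = u^{\bar A}$ on $\{\phi > 0\}$ (because by $\tau=0$ all vanishing intervals now lie in $[0, t_1)$, where $\tilde A$ and $\bar A$ agree), thereby deducing $\bar A = A_0$ on $(t_1, T)$ from $\mathcal A_1$; any residual non-$A_0$ mass on $(0, t_1)$ is then removed by a further time-shift perturbation that exploits $\bar A \in \mathcal A_2$ together with Lemma \ref{lem.barA=A0}.
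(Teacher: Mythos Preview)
Your argument is correct and essentially identical to the paper's: set $\tilde A := A_0$ on $(\tau,T)$ (equivalently on your set $E$, since $\bar A = A_0$ on $(\tau,T)\setminus E$ anyway), use the definition of $\tau$ to confine $\{\hat\gamma^-_{x,t}=0\}\subset [0,\tau]$ for points of $\{\phi>0\}$, deduce $\phi\, u^{\tilde A} \le \phi\, u^{\bar A}$ everywhere, and finish via $\bar A \in \mathcal A_1$. Your separate discussion of the edge case $\tau=0$ is more scrupulous than the paper (which does not isolate this case in the proof and simply assumes $\tau>0$ afterwards), but your closing appeal to $\mathcal A_2$ and an unspecified ``time-shift perturbation'' on $(0,t_1)$ is not made precise; the first step you give---replacing $(\tau,T)$ by $(t_1,T)$---already suffices to match what the paper actually proves and uses.
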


\begin{proof} Let 
$$
\tilde A(t)=\left\{\begin{array}{ll}
 A_0 & \text{on $(\tau,T)$,}\\[1mm]
 \bar A(t) & \text{otherwise.}
\end{array}\right. 
$$
Since $A_0\leq \tilde A\leq \bar A$ and $\tilde A=A_0$ in $[0,\tau]$, it follows that  $u^{\bar A}\leq u^{\tilde A}\leq u^{A_0}$ with $u^{\tilde A}= u^{\bar A}$  in $\{u^{\bar A}= u^{A_0}\}$ and in $[0,\infty)\times [0, \tau]$.
 \vs
 Let now $(x,t)\in (0,\infty)\times (\tau,T)$ be a point of differentiability of $u^{\bar A}$ such that $u^{\bar A}(x,t)< u^{A_0}(x,t)$. If $\phi(x,t)<0$, then $\phi(x,t)u^{\bar A}(x,t)\geq \phi(x,t)u^{\tilde A}(x,t)$. If $\phi(x,t)>0$, then $x\in (x_1, x_2)$ and thus, by the definition of $\tau$, $\{\hat \gamma^-_{x,t}=0\}\subset [0,\tau]$. Recall that $\tilde A=\bar A$ on $(0,\tau)$, so that 
\begin{align*}
u^{\bar A}(x,t)& =J^{\bar A}(\hat \gamma^-_{x,t})= \int_0^t (L^R(\dot{\hat \gamma}^-_{x,t}){\bf 1}_{\{\hat \gamma^-_{x,t}> 0\}} +
L^L(\dot{\hat \gamma}^-_{x,t}){\bf 1}_{\{\hat \gamma^-_{x,t}< 0\}} - \bar A{\bf 1}_{\{\hat \gamma^-_{x,t}=0\}})ds\\
& = 
\int_0^t (L^R(\dot{\hat \gamma}^-_{x,t}){\bf 1}_{\{\hat \gamma^-_{x,t}> 0\}} +
L^L(\dot{\hat \gamma}^-_{x,t}){\bf 1}_{\{\hat \gamma^-_{x,t}< 0\}}  - \tilde A{\bf 1}_{\{\hat \gamma^-_{x,t}=0\}})ds= J^{\tilde A}(\hat \gamma^-_{x,t})\\
& \geq u^{\tilde A}(x,t)\geq u^{\bar A}(x,t). 
\end{align*}
This proves that $u^{\tilde A}(x,t)=u^{\bar A}(x,t)$ in this case. 
\vs
In conclusion,  $\phi(x,t)u^{\tilde A}(x,t)\leq \phi(x,t)u^{\bar A}(x,t)$  and, therefore, $\mathcal J(\tilde A)\leq \mathcal J(\bar A)$. Thus $\tilde A$ is also optimal and, as $\tilde A\leq \bar A$, we obtain $\tilde A=\bar A$ since $\bar A$ belongs to $\mathcal A_1$. So $\bar A=A_0$ on $[\tau, T]$. 

\end{proof}

From now on, we implicitly assume that $\tau$ is positive, as Theorems \ref{thm.mainexTOT} and \ref{thm.mainexBIS} obviously hold if $\tau =0$ since then $\bar A\equiv A_0$. 
\vs
The next lemma explains the role of  \eqref{condstrange}. 

\begin{lem}\label{lem:condstrangeBIS}  Let $(x,t)\in [x_1,x_2]\times [t_3,t_4]$ and $\gamma$ be optimal for $u^{\bar A}(x,t)$. If $\gamma(h)=0$  for some $h\in [0,t]$, then $\gamma([h,t])\cap \{\phi>0\}=\emptyset$. Moreover, 
$$
\sup\{s \; : \gamma(s)=0\} \geq \tau.
$$ 
\end{lem}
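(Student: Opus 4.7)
For the first statement, I would invoke Proposition \ref{prop.structopti}: since $\gamma(h)=0$, there exist $0\le t_1'\le t_2'\le t$ such that $\gamma\equiv 0$ on $[t_1',t_2']\ni h$ and $\gamma$ is affine and nonvanishing on $(0,t_1')$ and on $(t_2',t]$. On $[h,t_2']$, $\gamma\equiv 0\notin (x_1,x_2)$ since $x_1>0$. On $[t_2',t]$, $\gamma$ is linear from $0$ to $x\in [x_1,x_2]$. If $t_2'\ge t_2$, then $[t_2',t]\cap (t_1,t_2)=\emptyset$. If $t_2'<t_2$, the map $t_2'\mapsto (t_2-t_2')/(t_3-t_2')$ is nonincreasing on $[0,t_2)$, so for $s\in [t_2',t_2]$,
\[
\gamma(s)\le \gamma(t_2)=\frac{x(t_2-t_2')}{t-t_2'}\le \frac{x_2(t_2-t_2')}{t_3-t_2'}\le \frac{x_2 t_2}{t_3}<x_1
\]
by \eqref{condstrange}. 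Hence $\gamma(s)\notin (x_1,x_2)$ on $[t_2',t_2]$, so $\gamma([h,t])\cap\{\phi>0\}=\emptyset$.

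For the second statement, set $t_2^*:=t_2'$ and argue by contradiction, assuming $t_2^*<\tau$. By Proposition \ref{prop.structopti}, $u^{\bar A}(x,t)<u^{A_0}(x,t)$. Using the definition of $\tau$ as a supremum, I select $s^*\in (t_2^*,\tau)$ close to $\tau$ and a witness $(x',t')\in (x_1,x_2)\times (s^*,t_2)$ with $u^{\bar A}(x',t')<u^{A_0}(x',t')$ and $\hat\gamma^-_{x',t'}(s^*)=0$; replacing $s^*$ by the last vanishing time of $\hat\gamma^-_{x',t'}$ if needed, one may assume $\hat\gamma^-_{x',t'}$ is affine and positive on $(s^*,t']$. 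The plan is a swap argument: form the competitor $\tilde\gamma$ for $u^{\bar A}(x,t)$ by concatenating $\hat\gamma^-_{x',t'}|_{[0,s^*]}$ with the linear segment from $(0,s^*)$ to $(x,t)$, and the competitor $\tilde{\hat\gamma}$ for $u^{\bar A}(x',t')$ by concatenating $\gamma|_{[0,t_2^*]}$ with the linear segment from $(0,t_2^*)$ to $(x',t')$; both new pieces lie in $(0,\infty)$ (since $x,x'>0$) and contribute only $L^R$-costs. By dynamic programming, $J^{\bar A}(\gamma|_{[0,t_2^*]})=u^{\bar A}(0,t_2^*)$ and $J^{\bar A}(\hat\gamma^-_{x',t'}|_{[0,s^*]})=u^{\bar A}(0,s^*)$; summing the two optimality inequalities cancels these endpoint values, and a rearrangement yields $\Psi(t_2^*)\le \Psi(s^*)$, where
\[
\Psi(s):=(t-s)L^R\!\left(\tfrac{x}{t-s}\right)-(t'-s)L^R\!\left(\tfrac{x'}{t'-s}\right).
\]

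A Legendre identity gives $\partial_s[(T-s)L^R(X/(T-s))]=H^R((L^R)'(X/(T-s)))$, so $\Psi'(s)=H^R(p^*(x/(t-s)))-H^R(p^*(x'/(t'-s)))$ with $p^*:=(L^R)'$. By choosing the witness so that $x'$ equals $x$ when $x\in (x_1,x_2)$, or is close to $x$ on the appropriate side when $x\in\{x_1,x_2\}$, and using $t\ge t_3>t_2>t'$, we arrange $x/(t-s)\le x'/(t'-s)$ throughout $[t_2^*,s^*]$; the monotonicity of $p^*$ and of $H^R$ on $[\hat p^R,0]$ then gives $\Psi'(s)\le 0$, with strict inequality on a set of positive measure. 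This forces $\Psi(t_2^*)>\Psi(s^*)$, contradicting the displayed inequality and completing the argument. The main obstacle will be certifying the strict sign of $\Psi'$ in degenerate configurations, namely when both slopes $x/(t-s)$ and $x'/(t'-s)$ lie in the flat region $\{v\ge (H^R)'(0)\}$ of $L^R$: the geometric bounds from the first part of the lemma and condition \eqref{condstrange}, together with a careful choice of the witness, are needed to rule out this possibility on the whole interval $[t_2^*,s^*]$.
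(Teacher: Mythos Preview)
Your argument for the first assertion is correct and essentially the same as the paper's: both use the structure of optimal trajectories and the slope bound coming from \eqref{condstrange}.

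For the second assertion, however, you have taken a considerably harder road than necessary, and the acknowledged obstacle at the end is real. The paper's argument is a two–line crossing argument that reuses the first part of the lemma. Assuming $s_2:=\sup\{s:\gamma(s)=0\}<\tau$, pick witnesses $(x_n,t_n)\in(x_1,x_2)\times(t_1,t_2)$ with $\hat\gamma^-_{x_n,t_n}(\tau_n)=0$, $\tau_n\to\tau$, and $\hat\gamma^-_{x_n,t_n}$ affine on $[\tau_n,t_n]$. For $n$ large one has $s_2<\tau_n$, hence $\hat\gamma^-_{x_n,t_n}(\tau_n)=0<\gamma(\tau_n)$; on the other hand the \emph{first part of the lemma} applied at $s=t_n\in[s_2,t]\cap(t_1,t_2)$ gives $\gamma(t_n)<x_1\le x_n=\hat\gamma^-_{x_n,t_n}(t_n)$. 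Thus the two optimal trajectories must cross at some point of $(0,\infty)\times(0,T)$, which is impossible. That is the whole proof.

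Your swap/$\Psi$ computation is, in effect, an attempt to reprove the non-crossing property analytically, and it runs into the difficulty you identify: when both slopes $x/(t-s)$ and $x'/(t'-s)$ lie in $[(H^R)'(0),\infty)$, the Legendre transform $L^R$ is flat there, $p^*=(L^R)'$ is constant, and $\Psi'\equiv 0$ on that range, so no strict inequality is available. Two further issues: the sentence ``By Proposition~\ref{prop.structopti}, $u^{\bar A}(x,t)<u^{A_0}(x,t)$'' inverts the direction of that proposition (it is not needed anyway); and you do not have the freedom to ``choose the witness so that $x'=x$'', since the witnesses are dictated by the definition of $\tau$ (though in fact \eqref{condstrange} already forces $x/(t-s)<x'/(t'-s)$ for any admissible witness, so this extra choice is unnecessary). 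The clean fix is to drop the $\Psi$ machinery entirely and use the first part to set up the crossing as above.
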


\begin{proof} In view of the structure of optimal trajectories established in Proposition \ref{prop.structopti}, 
there exists $0\leq s_1<s_2<t$ such that  $\gamma$ is a straight line on $[s_2,t]$, vanishes on $[s_1,s_2]$ and is a nonvanishing straight line on $[0,s_1)$. Therefore $h\in [s_1,s_2]$.
\vs
In order to check that  $\gamma([h,t])\cap \{\phi>0\}= \emptyset$, it is enough to show that, if $s\in [h,t]\cap (t_1,t_2)$, then  $\gamma(s) \leq  x_1$. Let $s\in [h,t]\cap (t_1,t_2)$. Then, it follows from the convexity of the map $s'\to  \gamma(s')$ on $[s_1, T]$ and the facts that $\gamma(h)=0$ and $\gamma(t)=x$, that 
$$
\gamma(s) \leq (s-h)x/(t-h) \leq (s-h)x_2/(t-h) \leq sx_2/t \leq t_2x_2/t_3. 
$$
Hence, by assumption \eqref{condstrange},  
$$
\gamma(s) \leq t_2x_2/ t_3< x_1,
$$
which shows that $\gamma([h,t])\cap \{\phi>0\}=\emptyset$.
\vs
Let us now check that $s_2= \sup\{s \; : \gamma(s)=0\}\geq \tau$. If not, in view of the definition of $\tau$, there exist $(x_n,t_n)\in (x_1,x_2)\times (t_1,t_2)$ such that  $\hat \gamma^-_{x_n,t_n}$ vanishes at $\tau_n$ and $\tau_n\to \tau$. Moreover, we can  assume, without loss of generality, that $\hat \gamma^-_{x_n,t_n}$ is a straight line on $[\tau_n, t_n]$. 
\vs
Since $s_2<\tau$,  it follows  that   $s_2< \tau_n$ for $n$ large. Moreover, we know that  $0=\hat \gamma^-_{x_n,t_n}(\tau_n) < \gamma(\tau_n)$.  Finally, the first part of the proof applied to $h=s_2$ and $s=t_n\in [s_2,t]\cap (t_1,t_2)$ yields $\hat \gamma^-_{x_n,t_n}(t_n)=x_n  \geq x_1 > \gamma(t_n)$. 
Thus, we infer that there exists $s\in (\tau_n, t_n)$ such that $\hat \gamma^-_{x_n,t_n}(s) = \gamma(s)$. 
\vs
But two optimal trajectories cannot cross in $(0,\infty)\times (0,T)$. Thus  there is a contradiction and we must have $s_2\geq \tau$.  

\end{proof}

The next lemma gives information about the location of the left point of any 
connected component of $\{u^{\bar A}(0, \cdot)<u^{A_0}(0,\cdot)\}$ in terms of $\tau$.

\begin{lem}\label{lemalestauBIS} Let $(a,b)$ be any connected component of $\{u^{\bar A}(0, \cdot)<u^{A_0}(0,\cdot)\}$. Then  $a< \tau$. 
\end{lem}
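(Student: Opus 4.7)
The plan is to argue by contradiction. Assume $(a,b)$ is a connected component of $\{u^{\bar A}(0,\cdot)<u^{A_0}(0,\cdot)\}$ with $a\geq\tau$, pick $t\in(a,b)$, and let $\hat\gamma$ be an optimal trajectory for $u^{\bar A}(0,t)$ of the canonical form provided by Proposition \ref{prop.structopti}. Since $u^{\bar A}(0,t)<u^{A_0}(0,t)$, that proposition forces $\hat\gamma$ to vanish; combined with $\hat\gamma(t)=0$ the structure reduces to: $\hat\gamma$ is affine on $[0,\sigma_1]$ with $\hat\gamma(\sigma_1)=0\neq\hat\gamma(0)$, and $\hat\gamma\equiv 0$ on $[\sigma_1,t]$ for some $\sigma_1\in[0,t)$. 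By the same proposition $u^{\bar A}(0,\sigma_1)=u^{A_0}(0,\sigma_1)$, so $\sigma_1\notin(a,b)$ and hence $\sigma_1\leq a$. The argument then splits according to whether $\sigma_1\geq\tau$ or $\sigma_1<\tau$.

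If $\sigma_1\geq\tau$, then Lemma \ref{lem.barA=A0sincetauBIS} gives $\bar A\equiv A_0$ on $[\sigma_1,t]$. Since the Lagrangian on $[0,\sigma_1]$ is independent of the control along the non-vanishing part of $\hat\gamma$, one obtains $J^{A_0}(\hat\gamma)=J^{\bar A}(\hat\gamma)=u^{\bar A}(0,t)$, which upper bounds $u^{A_0}(0,t)$. Combined with the universal inequality $u^{\bar A}\leq u^{A_0}$, this forces equality at $t$, contradicting $t\in(a,b)$.

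The main case is $\sigma_1<\tau$. The key idea is to choose an intermediate time $\sigma\in[\tau,a]$ satisfying (i) $\hat\gamma(\sigma)=0$, (ii) $\bar A\equiv A_0$ on $[\sigma,t]$ and (iii) $u^{\bar A}(0,\sigma)=u^{A_0}(0,\sigma)$. If $u^{\bar A}(0,\tau)=u^{A_0}(0,\tau)$, take $\sigma=\tau$; otherwise $\tau$ lies in the interior of some connected component $(a'',b'')$ of $\{u^{\bar A}(0,\cdot)<u^{A_0}(0,\cdot)\}$, and since $(a,b)$ and $(a'',b'')$ are disjoint maximal connected subsets with $a\geq\tau>a''$, maximality forces $b''\leq a$, so take $\sigma=b''$. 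In both situations $\sigma_1<\sigma\leq a<t$, hence $\hat\gamma(\sigma)=0$; (ii) holds because $[\sigma,t]\subset[\tau,T]$ by Lemma \ref{lem.barA=A0sincetauBIS}, and (iii) holds by construction. Dynamic programming applied to $\hat\gamma|_{[\sigma,t]}$ then gives $u^{\bar A}(0,t)=u^{\bar A}(0,\sigma)-A_0(t-\sigma)$, while extending any optimizer $\eta$ for $u^{A_0}(0,\sigma)$ by $\tilde\eta\equiv 0$ on $[\sigma,t]$ yields the competitor bound $u^{A_0}(0,t)\leq J^{A_0}(\tilde\eta)=u^{A_0}(0,\sigma)-A_0(t-\sigma)=u^{\bar A}(0,t)$. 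Combined with $u^{\bar A}\leq u^{A_0}$, one again reaches equality at $t$, contradicting $t\in(a,b)$.

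The main obstacle is the identification of $\sigma$ in the subcase $u^{\bar A}(0,\tau)<u^{A_0}(0,\tau)$: one must exploit the disjointness and maximality of the connected components of $\{u^{\bar A}(0,\cdot)<u^{A_0}(0,\cdot)\}$, together with the assumption $a\geq\tau$, to place the component $(a'',b'')$ containing $\tau$ entirely to the left of $a$. Once $\sigma$ has been pinned down, the rest is a direct dynamic programming comparison between $u^{\bar A}$ along $\hat\gamma$ and the suboptimal competitor for $u^{A_0}$ obtained by concatenating an optimizer for $u^{A_0}(0,\sigma)$ with the zero trajectory on $[\sigma,t]$.
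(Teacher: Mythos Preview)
Your proof is correct and follows essentially the same strategy as the paper: contradiction via dynamic programming, using Lemma~\ref{lem.barA=A0sincetauBIS} and the structure of optimizers from Proposition~\ref{prop.structopti}. The paper's version is shorter because it avoids your case split by directly taking $\sigma=a$: since $\sigma_1\leq a$ (which you already derive), $a\geq\tau$ (the contradiction hypothesis), and $u^{\bar A}(0,a)=u^{A_0}(0,a)$ (as $a$ is an endpoint of the component), your three properties (i)--(iii) are automatically satisfied at $\sigma=a$, so the search for an intermediate $\sigma$ and the distinction between $\sigma_1\geq\tau$ and $\sigma_1<\tau$ are unnecessary.
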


\begin{proof} If $a\geq \tau$, it follows from Lemma~\ref{lem.barA=A0sincetauBIS} that $\bar A\equiv A_0$ on $[\tau,T]$ and, thus,  on $(a,b)$. 
\vs
Fix $\bar t\in (a,b)$ and let $\gamma$ be optimal for $u^{\bar A}(0,\bar t)$. Then from the structure of optimizers, $\gamma$ has to vanish on $[a,\bar t]$, while dynamic programming gives
\begin{align*}
u^{\bar A}(0,\bar t) = -\int_a^{\bar t} \bar A ds +u^{\bar A}(0,a) =  -\int_a^{\bar t}  A_0 ds +u^{ A_0}(0,a) \geq u^{A_0}(0,\bar t). 
\end{align*}
\vs
Then we must have $u^{\bar A}(0,\bar t)= u^{A_0}(0,\bar t)$, a contradiction  to the fact that  $(a,b)$ is a subset of $\{u^{\bar A}(0, \cdot)<u^{A_0}(0,\cdot)\}$.

\end{proof}

In view of the previous lemma, we have either $\tau \in (a,b]$ or $b< \tau$. We continue with the analysis of the later case. 

\begin{lem}\label{lem.cdBIS} Let $(a,b)$ be a connected component of $\{u^{\bar A}(0,\cdot)<u^{A_0}(0,\cdot)\}$ with $b< \tau$. Then there exists $\hat \tau\in (a,b)$ such that $\bar A\equiv 0$ on $(a,\hat \tau)$ and $\bar A\equiv A_0$ on $(\hat \tau, b)$. 
\end{lem}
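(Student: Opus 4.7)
My plan is to construct a bang-bang rearrangement $\tilde A$ of $\bar A$ on $(a,b)$ with profile ``$0$ first, then $A_0$'' having the same integral over $(a,b)$ as $\bar A$, to verify that $\tilde A \in \mathcal A_1$, and then to exploit $\bar A \in \mathcal A_2$ to force $\bar A = \tilde A$.

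Concretely, setting $c = \int_a^b \bar A \in [A_0(b-a),0]$ and $\hat\tau = b + c/|A_0|\in [a,b]$, I would define $\tilde A$ to agree with $\bar A$ on $[0,T]\setminus (a,b)$, to be identically $0$ on $(a,\hat\tau)$ and identically $A_0$ on $(\hat\tau,b)$, so that $\int_a^b \tilde A = c$. The standard rearrangement inequality then yields $\int_a^t \tilde A \geq \int_a^t \bar A$ for every $t \in [a,b]$, with equality at both endpoints.

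The central step is showing $\mathcal J(\tilde A) \leq \mathcal J(\bar A)$. For every $(x,t)$, comparing the $\tilde A$--cost of $\hat\gamma_{x,t}^+$, the $\bar A$--optimal trajectory with earliest vanishing start $[\rho^+,\sigma^+]$, to its $\bar A$--cost gives
\[
u^{\tilde A}(x,t) \leq u^{\bar A}(x,t) + \int_{[\rho^+,\sigma^+]\cap (a,b)} (\bar A - \tilde A)\,ds.
\]
Proposition \ref{prop.structopti} prevents $\rho^+$ from belonging to $(a,b)$, so either the overlap is empty or $\rho^+ \leq a$; in the latter case, the overlap is $(a,\min(\sigma^+, b)]$ and the rearrangement inequality $\int_a^t(\bar A - \tilde A)\leq 0$ makes the extra integral nonpositive. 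Thus $u^{\tilde A} \leq u^{\bar A}$ on all of $\R\times (0,T]$. For $(x,t)\in[x_1,x_2]\times[t_3,t_4]$, where $\phi<0$, Lemma \ref{lem:condstrangeBIS} forces $\sigma^+ \geq \tau > b$, so whenever $\hat\gamma^+_{x,t}$ overlaps $(a,b)$ the overlap is in fact all of $(a,b)$ and the extra integral $\int_a^b(\bar A-\tilde A) = 0$ vanishes. The same trajectory then realizes $u^{\bar A}(x,t)$ also as a cost under $\tilde A$; a symmetric argument --- using $\{u^{\tilde A}=u^{A_0}\}\subseteq \{u^{\bar A}=u^{A_0}\}$ and an analog of Lemma \ref{lem:condstrangeBIS} for $\tilde A$ (which should persist because $\tilde A$ and $\bar A$ coincide off $(a,b)$, leaving the critical family of trajectories near $\tau$ essentially unchanged) --- gives the reverse inequality on the negative region, so
\[
\mathcal J(\tilde A) - \mathcal J(\bar A) = \int\!\!\int_{\{\phi>0\}} \phi\,(u^{\tilde A}-u^{\bar A})\,dx\,dt \leq 0.
\]

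Once this is established, minimality of $\bar A$ forces $\mathcal J(\tilde A) = \mathcal J(\bar A)$, and since $\int_0^T \tilde A = \int_0^T \bar A$, also $\tilde A \in \mathcal A_1$. A Fubini--type integration by parts yields
\[
\int_0^T s(\tilde A-\bar A)\,ds = \int_a^b s(\tilde A-\bar A)\,ds = -\int_a^b\!\Big(\int_a^s (\tilde A-\bar A)\,dr\Big)ds \leq 0,
\]
with strict inequality unless $\tilde A = \bar A$ a.e. on $(a,b)$. Since $\bar A \in \mathcal A_2$ imposes the opposite inequality $\int_0^T s\bar A \leq \int_0^T s\tilde A$, both are equalities and $\bar A = \tilde A$ a.e. on $(a,b)$, which is exactly the claim.

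The main obstacle will be the equality $u^{\tilde A} = u^{\bar A}$ on the negative region $[x_1,x_2]\times[t_3,t_4]$: the inequality $u^{\tilde A}\leq u^{\bar A}$ drops out of the rearrangement estimate, but the reverse direction asks that no $\tilde A$--optimal trajectory reaching into this region can terminate its vanishing strictly inside $(a,b)$, which requires tracking carefully how the rearrangement affects the trajectories defining $\tau$.
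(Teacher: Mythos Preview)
Your overall strategy---rearrange $\bar A$ on $(a,b)$ into the ``$0$ then $A_0$'' profile with the same integral, show the rearranged control is still in $\mathcal A_1$, and then invoke $\bar A\in\mathcal A_2$---is exactly the mechanism the paper uses. The inequality $u^{\tilde A}\le u^{\bar A}$ via the rearrangement bound $\int_a^t\tilde A\ge\int_a^t\bar A$ is also correct and matches the paper.

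The genuine gap is precisely the point you flag at the end: the reverse inequality $u^{\tilde A}\ge u^{\bar A}$ on $\{\phi<0\}$. Your justification that an analog of Lemma~\ref{lem:condstrangeBIS} ``should persist because $\tilde A$ and $\bar A$ coincide off $(a,b)$'' is not a proof. The second conclusion of that lemma (that $\sup\{s:\gamma(s)=0\}\ge\tau$) is obtained by a non-crossing argument against the $\bar A$-optimal trajectories $\hat\gamma^-_{x_n,t_n}$ defining $\tau$; there is no reason a $\tilde A$-optimal path cannot cross a $\bar A$-optimal one. Concretely, a $\tilde A$-optimal trajectory from $(x,t)\in\{\phi<0\}$ could have $s^+=\sup\{s:\gamma(s)=0\}\in(a,\hat\tau]$: staying at $0$ there costs $-\int_a^{s^+}\tilde A=0$, which is the cheapest possible, and nothing in your argument rules this out. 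In that case $J^{\tilde A}(\gamma)\le J^{\bar A}(\gamma)$, the wrong direction, and the equality on $\{\phi<0\}$ does not follow.

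The paper avoids this by working \emph{locally}: it rearranges only on a short interval $(t_0,t_0+\ep)\subset(a,b)$, so that $\|A_\ep-\bar A\|_{L^1}\le|A_0|\ep$. Stability of minimizers then guarantees that, for $\ep$ small, every $A_\ep$-optimal trajectory from $\{\phi<0\}$ that vanishes has $\sup\{s:\gamma(s)=0\}\ge b$, and the equality $u^{A_\ep}=u^{\bar A}$ on $\{\phi<0\}$ follows. The contradiction with $\bar A\in\mathcal A_2$ is then obtained on the small interval. This local-plus-stability device is the missing ingredient in your proposal. A secondary omission: you do not argue that $\hat\tau\in(a,b)$ strictly; the paper handles this by first ruling out $\bar A\equiv 0$ and $\bar A\equiv A_0$ on $(a,b)$ via elementary dynamic-programming comparisons.
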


\begin{proof} We first check that $\bar A\not \equiv A_0$ in $(a,b)$. Indeed, otherwise, for  $t\in (a,b)$,  let $\gamma$ be optimal for $u^{\bar A}(0,t)$. Then $\gamma(s)= 0$ on $[a,t]$ and, thus,  by dynamic programming we must have 
\be\label{oulkqejsfdg}
u^{\bar A}(0,t)= -(t-a)A_0 + u^{\bar A}(0,a)= -(t-a)A_0 + u^{A_0}(0,a), 
\ee
and, after letting $t\to b^-$, 
$$
u^{A_0}(0,b)= u^{\bar A}(0,b)=  -(b-a)A_0 + u^{A_0}(0,a).
$$
Assume to fix the ideas that $A_0= \min H^R$--the other case can be treated in a symmetric way. 
\vs
Then, since  $u^{A_0}_t+H^R(u^{A_0}_x)=0$ in $(0,\infty)\times (0,T)$  with $H^R\geq A_0$, we get that  $u^{A_0}_t\leq -A_0$ a.e. on $(0,\infty)\times (0,T)$. It follows  that $u^{A_0}(0,b)-u^{A_0}(0,a) \leq -A_0(b-a)$. Therefore $u^{A_0}_t(0,t)= -A_0$ on $(a,b)$, which, recalling \eqref{oulkqejsfdg}, implies that $u^{A_0}=u^{\bar A}$ on $\{0\}\times [a,b]$, a  contradiction to  the definition of $(a,b)$. 
\vs
Next we  check that $\bar A\not \equiv 0$ in $(a,b)$. Indeed, otherwise, we have by dynamic programming that $u^{\bar A}(0,\cdot)$ is constant on $[a,b]$ and, thus,  that 
$$u^{A_0}(0,a)=u^{\bar A}(0,a)=u^{\bar A}(0,b)=u^{A_0}(0,b).$$
Since  $u^{A_0}_t+H^R(u^{A_0}_x)=0$ in $(0,\infty)\times (0,T)$ with $H^R\leq 0$,it follows  that $u^{A_0}(0, \cdot)$ is nondecreasing and thus constant in $[a,b]$. So $u^{A_0}=u^{\bar A}$ on $\{0\}\times [a,b]$, which contradicts the definition of $(a,b)$. 
\vs
We now prove the existence of  $\hat \tau\in (a,b)$ such that $\bar A\equiv 0$ on $(a,\hat \tau)$ and $\bar A\equiv A_0$ on $(\hat \tau, b)$. Arguing by contradiction,  we have that, for any $\ep>0$ small, there exists $t_0\in (a,b)$ with $t_0+\ep\in (a,b)$ such that $\bar A\not \equiv 0$ and $\bar A\not \equiv A_0$ a.e. in $(t_0, t_0+\ep)$ and $s\to \bar A (s)$ not (essentially) decreasing  in $(t_0, t_0+\ep)$. Then, we must have 
$$\ep^{-1}\int_{t_0}^{t_0+\ep} \bar A(s)ds \in (A_0,0).$$
\vs
Let 
$$
 A_\ep(s)= \left\{ \begin{array}{ll}
 \bar A(s) &  \text{on}\; (0,T)\backslash (t_0,t_0+\ep),\\[1mm]
0 & \text{on}\; (t_0,  \tau_\ep),\\[1mm]
A_0 & \text{on}\; ( \tau_\ep ,t_0+\ep),
\end{array}\right.
$$
where $ \tau_\ep\in (t_0, t_0+\ep)$ is such that
\be\label{ajzedrnfjk0}
\int_{t_0}^{t_0+\ep}  \bar A(s)ds = \int_{t_0}^{t_0+\ep}   A_\ep(s)ds. 
\ee
Note that, by definition, $A_\ep$ is the unique minimizer, among all measurable maps $x:[t_0,t_0+\ep]\to [A_0,0]$ such that $\int_{t_0}^{t_0+\ep}  \bar A(s)ds= \int_{t_0}^{t_0+\ep} x(s)ds$,  of the map $x\to \int_{t_0}^{t_0+\ep} s x(s)ds$.
\vs
Moreover, for all $t\in (t_0,t_0+\ep)$, 
\be\label{ajzedrnfjk}
\int_{t_0}^t A_\ep(s)ds \geq \int_{t_0}^t \bar A(s)ds. 
\ee
The stability of minimizers, Lemma \ref{lem.cdBIS} and the facts that $b<\tau$ and $A_\ep\to \bar A$ in $L^1$ as $\ep\to 0^+$ yield  some small  $\ep>0$  such that, for any $(x,t)\in \{\phi<0\}$, any optimal solution $\gamma$ for $u^{A_\ep}(x,t)$ which vanishes satisfies $\sup\{s\; : \gamma(s)=0\}\geq b$.  
\vs
We claim that $u^{A_\ep}\leq u^{\bar A}$ in $[0,\infty)\times [0,T]$. Indeed, let $(x,t)\in (0,\infty)\times (0,T)$ be such that $u^{\bar A}$ is differentiable at $(x,t)$. If $u^{\bar A}(x,t)=u^{A_0}(x,t)$, then $u^{A_\ep}(x,t)\leq u^{A_0}(x,t)=u^{\bar A}(x,t)$. 
\vs
Next  assume that $u^{\bar A}(x,t)<u^{A_0}(x,t)$ and let $\gamma=\hat \gamma^-_{x,t}$ be the optimal solution for $u^{\bar A}(x,t)$ which remains the least in $\{x=0\}$. Then, by the structure of minimizers,  there exists $0\leq \tau^-<\tau^+\leq t$ such that $\gamma$ vanishes on $[\tau^-,\tau^+]$ and is affine and does not vanish in $(0, \tau^-)$ and in $(\tau^-,t)$ and, in addition,  $u^{\bar A}(0,\tau^+)<u^{A_0}(0,\tau^+)$ and $u^{\bar A}(0,\tau^-)=u^{A_0}(0,\tau^-)$.
\vs

If $(t_0,t_0+\ep)\cap (\tau^-,\tau^+)=\emptyset$, then  
$$
u^{A_\ep}(x,t) \leq J^{A_\ep}(\gamma) = J^{\bar A}(\gamma) = u^{\bar A}(x,t),
$$
where the first equality holds because $A_\ep=\bar A$ outside $(t_0,t_0+\ep)$ and $\gamma$ vanishes only outside this interval. 
\vs

If $(t_0,t_0+\ep)\cap (\tau^-,\tau^+)\neq \emptyset$, then, since  $u^{\bar A}=u^{A_0}$ in $(0,a)$ and $(0,b)$ and $u^{\bar A}<u^{A_0}$ in $\{0\}\times (a,b)$ and $\gamma = \hat \gamma^-_{x,t}$, we have $\tau^+\in (t_0,b)$ and $\tau^-=a$. The dynamic programming then yields 
\begin{align*}
u^{A_\ep}(x,t) & \leq u^{A_\ep}(0,a)+\int_{\tau^+}^t L^R(\dot\gamma(s))ds -\int_a^{\tau^+} A_\ep(s)ds  \\ 
& \leq u^{\bar A}(0,a) +\int_{\tau^+}^t L^R(\dot\gamma(s))ds -\int_a^{\tau^+} \bar A(s)ds  = u^{\bar A}(x,t),
\end{align*}
where the second inequality holds because  $u^{\bar A}= u^{A_\ep}$ in $\R\times [0,t_0]$,
and  \eqref{ajzedrnfjk} holds. It follows that $u^{A_\ep}\leq u^{\bar A}$. 
\vs
Next we claim that $\{u^{A_\ep}< u^{\bar A}\}\subset \{\phi\geq 0\}$. Let $\gamma$ be optimal for $u^{A_\ep}(x,t)$, where $(x,t)$ is a point of differentiability of $u^{A_\ep}$ and $\phi(x,t)<0$. By the choice of $\ep$, we know that $h=\sup\{s \; :\gamma(s)=0\} \geq b$. 
\vs
If $\{\gamma=0\}\cap (a,b)=\emptyset$, then
$$
u^{A_\ep}(x,t)=J^{A_\ep}(\gamma)= J^{\bar A}(\gamma)\geq u^{\bar A}(x,t),
$$
and, hence $u^{A_\ep}(x,t)=u^{\bar A}(x,t)$. 
\vs
Otherwise $\gamma\equiv 0$ on $(a,b)$ because $u^{A_\ep}\leq u^{\bar A}<u^{A_0}$ on $\{0\}\times (a,b)$ and $\gamma(h)=0$ with $h\geq b$, and,  then 
\begin{align*}
u^{A_\ep}(x,t)& = u^{A_\ep}(0,a) -\int_a^h A_\ep(s)ds + \int_h^t L^R(\dot \gamma)  \\
& = u^{\bar A}(0,a) -\int_a^h \bar A(s)ds + \int_h^t L^R(\dot \gamma) = J^{\bar A}(\gamma)\geq u^{\bar A}(x,t),
\end{align*}
where we used \eqref{ajzedrnfjk0} and the fact that $A_\ep=\bar A$ in $(t_0+\ep, T)$ in the second equality. 
\vs
This shows that $u^{A_\ep}(x,t)=u^{\bar A}(x,t)$ and completes the claim. 
\vs
We now complete the proof of the lemma. Since  $u^{A_\ep}\leq u^{\bar A}$ with  equality in $\{\phi<0\}$, $A_\ep$ is also a minimizer of $\mathcal J$. By construction we have $\int_0^TA_\ep=\int_0^T \bar A$, so that $A_\ep$ belongs to $\mathcal A_1$. Finally, as $A_\ep$ is the unique minimizer, among all measurable maps $x:[t_0,t_0+\ep]\to [A_0,0]$ such that $\int_{t_0}^{t_0+\ep}  \bar A(s)ds= \int_{t_0}^{t_0+\ep} x(s)ds$,   of the map $x\to  \int_{t_0}^{t_0+\ep} s x(s)ds$,
we have 
$\int_0^T s A_\ep(s)ds<\int_0^T s \bar A(s)ds$, which contradicts the assumption that $\bar A\in \mathcal A_2$. 
\vs
It follows that $\bar A$ satisfies the required condition.  
\end{proof}

We now address the case of a connected component $(a,b)$ such that $\tau \in (a,b]$. Note that,  in view of Lemma \ref{lemalestauBIS},  $(a,b)$ must then be the last connected component of $\{u^{\bar A}< u^{A_0}\}$.

\begin{lem}\label{lem.pptetauBIS} Let $\tau$ be defined in  \eqref{deftautauBIS}  and $(a,b)$ be a connected component of $\{u^{\bar A}< u^{A_0}\}$ such that $\tau \in (a,b]$. Then,
$$
\tau = \inf\{s\in [0,T] : \;  \bar A\equiv A_0\; \text{on}\; [s,T]\}.
$$
\end{lem}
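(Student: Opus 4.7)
The plan is to argue by contradiction. From Lemma \ref{lem.barA=A0sincetauBIS} we already know $\bar A \equiv A_0$ on $(\tau, T)$, so $\tau^* := \inf\{s \in [0,T] : \bar A \equiv A_0 \text{ on } [s, T]\}$ satisfies $\tau^* \leq \tau$, and because the defining set is upward-closed modulo null sets, $\bar A \equiv A_0$ on $[\tau^*, T]$. I suppose that $\tau^* < \tau$ and split into two cases according to the location of $\tau^*$ relative to $a$.

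\emph{Case 1: $\tau^* \leq a$.} Then $\bar A \equiv A_0$ on $[a,T] \supset (a,b)$, and I recycle the argument from the opening of the proof of Lemma \ref{lem.cdBIS}. For any $t \in (a,b)$, if $\gamma$ is optimal for $u^{\bar A}(0,t)$, Proposition \ref{prop.structopti} and the identity $u^{\bar A}(0,\tau^-) = u^{A_0}(0,\tau^-)$ force $\tau^- \leq a$ (since $(a,b) \subset \{u^{\bar A} < u^{A_0}\}$), so $\gamma \equiv 0$ on $[a,t]$. Dynamic programming then gives $u^{\bar A}(0,t) = u^{A_0}(0,a) - A_0(t-a)$. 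Letting $t \to b^-$, using $u^{\bar A}(0,b) = u^{A_0}(0,b)$ and the HJ inequality $u^{A_0}_t \leq -A_0$ (valid because $H^R \geq A_0 = \min H^R$), I obtain $u^{A_0}(0,\cdot) = u^{\bar A}(0,\cdot)$ on $[a,b]$, contradicting $(a,b) \subset \{u^{\bar A}(0,\cdot) < u^{A_0}(0,\cdot)\}$.

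\emph{Case 2: $a < \tau^* < \tau$.} Here $\bar A \equiv A_0$ on $[\tau^*, T]$ but, by the very definition of $\tau^*$, the set $\{s \in (a,\tau^*) : \bar A(s) > A_0\}$ has positive measure in every left-neighborhood of $\tau^*$. I then perform a rearrangement in the spirit of the $A_\ep$ construction of Lemma \ref{lem.cdBIS}: choose a small interval $(t_0, t_0+\ep) \subset (a,\tau^*)$ on which $\bar A$ is not already in the form ``value $0$ followed by value $A_0$'', and replace $\bar A$ on $(t_0,t_0+\ep)$ by $0$ on $(t_0, \tau_\ep)$ followed by $A_0$ on $(\tau_\ep, t_0+\ep)$, where $\tau_\ep$ is chosen to preserve $\int_{t_0}^{t_0+\ep} \bar A$. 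This yields an admissible control $A_\ep$ with $\int_0^T A_\ep = \int_0^T \bar A$ but $\int_0^T s A_\ep(s)\,ds < \int_0^T s \bar A(s)\, ds$. I would then copy verbatim the part of the proof of Lemma \ref{lem.cdBIS} showing $u^{A_\ep} \leq u^{\bar A}$ with equality on $\{\phi \neq 0\}$, so that $A_\ep \in \mathcal A_1$ is also a minimizer, contradicting $\bar A \in \mathcal A_2$.

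The main obstacle is adapting the last step of Lemma \ref{lem.cdBIS} to the present setting, since that lemma was stated under $b < \tau$ and used Lemma \ref{lem:condstrangeBIS} to force optimal trajectories from $\{\phi < 0\}$ to vanish past $b$. Here I have only $\tau \in (a,b]$, but the same Lemma \ref{lem:condstrangeBIS} still guarantees that for $(x,t) \in \{\phi < 0\}$, any vanishing trajectory has vanishing supremum $\geq \tau$; moreover, since the perturbation is localized in $(t_0,t_0+\ep) \subset (a,\tau^*)$ and $\bar A \equiv A_0 \equiv A_\ep$ on $[\tau^*, T]$, the behavior of optimal trajectories for $u^{A_\ep}$ past $\tau^*$ is controlled as in the original argument (up to the standard $L^1$-stability-of-minimizers correction), and the dynamic programming comparisons then propagate the inequality $u^{A_\ep} \leq u^{\bar A}$ together with equality on $\{\phi \neq 0\}$.
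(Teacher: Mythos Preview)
Your Case 1 is fine, but Case 2 has a genuine gap. The rearrangement $A_\ep$ yields a \emph{strict} decrease of $\int_0^T s A(s)\,ds$ only when $\bar A$ is not already of the form ``$0$ then $A_0$'' on the chosen interval $(t_0,t_0+\ep)$. You assert that such an interval can be found inside $(a,\tau^*)$, but you never justify this, and in fact nothing you have proved excludes the configuration $\bar A\equiv 0$ on $(a,\tau^*)$ and $\bar A\equiv A_0$ on $[\tau^*,T]$ with $a<\tau^*<\tau$. In that scenario every subinterval of $(a,\tau^*)$ already carries $\bar A\equiv 0$, the rearrangement returns $\bar A$ itself, and no contradiction with $\bar A\in\mathcal A_2$ arises. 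Your observation that $\{\bar A>A_0\}$ has positive measure in every left-neighborhood of $\tau^*$ is of course compatible with $\bar A\equiv 0$ there, so it does not help. The ``main obstacle'' you discuss in your last paragraph is a different technical point (equality on $\{\phi<0\}$) and does not address this issue.

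The paper's proof bypasses the $\mathcal A_2$ machinery entirely and is much shorter. With $\hat\tau:=\tau^*$, it returns to the very definition \eqref{deftautauBIS} of $\tau$: there are points $(x_n,t_n)\in\{\phi>0\}$ whose optimal trajectories $\hat\gamma^-_{x_n,t_n}$ vanish at times $\tau_n\to\tau$ and are straight lines on $[\tau_n,t_n]$. If $\hat\tau<\tau$, then for large $n$ one has $\hat\tau\vee a<\tau_n$; moreover $\hat\gamma^-_{x_n,t_n}\equiv 0$ on $[\hat\tau\vee a,\tau_n]$ (since it vanishes on $(a,\tau_n)$), and on that interval $\bar A=A_0=-L^R(0)$. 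Hence the cost of $\hat\gamma^-_{x_n,t_n}$ from time $\hat\tau\vee a$ to $t_n$ equals $\int_{\hat\tau\vee a}^{t_n}L^R(\dot{\hat\gamma}^-_{x_n,t_n})$, and replacing this broken path by the single straight line from $(0,\hat\tau\vee a)$ to $(x_n,t_n)$ strictly lowers the cost by strict convexity of $L^R$, contradicting optimality. This argument handles both of your cases at once with no appeal to $\mathcal A_1$ or $\mathcal A_2$.
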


\begin{proof}  Let $\hat \tau =   \inf\{s\in [0,T] :  \; \bar A\equiv A_0\; \text{on}\; [s,T]\}$. In view of Lemma \ref{lem.barA=A0sincetauBIS}, we already know that $\hat \tau\leq \tau$. 
\vs

By the definition of $\tau$, there exists $(x_n,t_n)\in (x_1,x_2)\times (t_1,t_2)$ such that  $\hat \gamma^-_{x_n,t_n}$ vanishes at $\tau_n$ and $\tau_n\to \tau$. We can also assume, without loss of generality,  that $\hat \gamma^-_{x_n,t_n}$ is a straight-line on $[\tau_n, t_n]$. 
\vs
If $\hat \tau<\tau$, then  $\tau_n>\hat \tau$ for $n$ large enough. But then, since $\hat \gamma^-_{x_n,t_n}$ vanishes on $(a,\tau_n)$,   
\begin{align*}
u^{\bar A}(x_n,t_n) & = u^{\bar A}(0,\hat \tau\vee a ) + \int_{\tau_n}^{t_n} L^R(\dot{\hat \gamma}^-_{x_n,t_n}) ds -\int_{\hat \tau\vee a }^{\tau_n} A_0 ds\\ 
&=    u^{\bar A}(0,\hat \tau\vee a ) +  \int_{\hat \tau\vee a }^{t_n} L^R(\dot{\hat \gamma}^-_{x_n,t_n}) ds   > u^{\bar A}(0,\hat \tau\vee a )+ \int_{\hat \tau\vee a }^{t_n} L^R(\dot{ \gamma}_n) ds \geq  u^{\bar A}(x_n,t_n) , 
\end{align*}
where $\gamma_n:[\hat \tau, t_n]\to \R$ is the straight line $\gamma_n(s)= s\to (s-\hat \tau\vee a )x_n/(t_n-\hat \tau\vee a )$, the second equality holds because $L^R(0)= -\min H^R =-A_0$, and  the strict inequality comes from the fact that $L^R$ is strictly convex and $\hat \tau\vee a <\tau_n$. 
\vs
Since the conclusion above is impossible, we must  have $\hat \tau=\tau$. 
\end{proof}

In order to proceed,  we need to perturb sightly the optimal control. For this, it is necessary  to have estimates on the behavior of the optimal trajectories of the perturbed problem. This is the aim of the following lemma.

\begin{lem}\label{lem.stabiloBIS} Assume \eqref{takis1} and \eqref{takis2}. For each $\delta>0$, there exists $\ep>0$ so that,  for all measurable $A: [0,T] \to  [A_0,0]$ such that $\|A-\bar A\|_{L^1}\leq \ep$, for all $(x,t)\in (x_1,x_2)\times (t_3,t_4)$ such that $u^A(x,t)<u^{A_0}(x,t)$, and for all $\gamma$ optimal paths for $u^A(x,t)$, 
$$
\sup\{s : \; \gamma(s)=0\}\geq \tau-\delta.
$$
\end{lem}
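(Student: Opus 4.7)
\textbf{My plan} is to argue by contradiction, reducing to the earlier structural lemma. Suppose the claim fails. Then there exist $\delta_0>0$, a sequence of measurable controls $A^n:[0,T]\to[A_0,0]$ with $\|A^n-\bar A\|_{L^1}\to 0$, points $(x_n,t_n)\in(x_1,x_2)\times(t_3,t_4)$ with $u^{A^n}(x_n,t_n)<u^{A_0}(x_n,t_n)$, and optimal paths $\gamma_n$ for $u^{A^n}(x_n,t_n)$ satisfying
$$s_n:=\sup\{s\in[0,t_n]:\gamma_n(s)=0\}<\tau-\delta_0,$$
with the convention $s_n=-\infty$ if $\gamma_n$ never vanishes. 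Since $L^1$-convergence implies weak-$\ast$ convergence on the bounded set $[A_0,0]$, Proposition~\ref{prop:weakstab} gives $u^{A^n}\to u^{\bar A}$ locally uniformly, and guarantees existence of an optimal $\gamma_n$.

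\textbf{Next I would} eliminate the case $s_n=-\infty$: a nonvanishing trajectory $\gamma_n$ would satisfy $J^{A^n}(\gamma_n)=J^{A_0}(\gamma_n)\geq u^{A_0}(x_n,t_n)$, contradicting $u^{A^n}(x_n,t_n)<u^{A_0}(x_n,t_n)$. Hence $s_n\in[0,t_n]$. By Lemma~\ref{takis11BIS} together with the strict convexity of $L^L,L^R$, the optimal path $\gamma_n$ is piecewise affine: affine and nonvanishing on $[0,s_n^-)$, identically zero on $[s_n^-,s_n]$, and affine and nonvanishing on $(s_n,t_n]$, for some $0\leq s_n^-\leq s_n$. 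Reproducing the endpoint argument from the proof of Proposition~\ref{prop:weakstab} (replacing $\gamma_n(0)$ by a bounded comparison endpoint whenever it is too large or too negative in proportion to $s_n^-$) I may assume $\gamma_n(0)$ is bounded. Passing to a subsequence, I extract $(x_n,t_n)\to(x,t)\in[x_1,x_2]\times[t_3,t_4]$, $s_n^-\to\bar s^-$, $s_n\to\bar s$ with $0\leq\bar s^-\leq\bar s\leq\tau-\delta_0$, $\gamma_n(0)\to y$, and $\gamma_n\to\gamma$ uniformly on $[0,T]$, where $\gamma$ is affine from $y$ to $0$ on $[0,\bar s^-]$, identically $0$ on $[\bar s^-,\bar s]$, and affine from $0$ to $x$ on $[\bar s,t]$. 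The passage to the limit in $J^{A^n}(\gamma_n)$, using uniform convergence of $\gamma_n$, $L^1$-convergence of $\dot\gamma_n$, convergence of the intervals $\{\gamma_n=0\}$, and weak-$\ast$ convergence of $A^n$ (all carried out explicitly in Proposition~\ref{prop:weakstab}) yields $u^{\bar A}(x,t)=J^{\bar A}(\gamma)$, so $\gamma$ is optimal for $u^{\bar A}(x,t)$.

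\textbf{To conclude}, since $(x,t)\in[x_1,x_2]\times[t_3,t_4]$, $\gamma$ is optimal for $u^{\bar A}(x,t)$, and $\gamma(\bar s)=0$ with $\bar s\in[0,t]$, Lemma~\ref{lem:condstrangeBIS} forces $\sup\{s:\gamma(s)=0\}\geq\tau$. But by construction $\gamma$ is the nonvanishing affine segment from $0$ to $x\geq x_1>0$ on $[\bar s,t]$, so $\sup\{s:\gamma(s)=0\}=\bar s\leq\tau-\delta_0<\tau$, which is the desired contradiction. \textbf{The main obstacle} I anticipate is the passage to the limit in the cost functional: I need uniform control of the initial points $\gamma_n(0)$ and $L^1$-convergence of the data $A^n$ to pass through the characteristic functions ${\bf 1}_{\{\gamma_n=0\}}$; however, this is precisely the analysis already performed in Proposition~\ref{prop:weakstab}, so no essentially new technique is required beyond invoking those arguments carefully together with Lemma~\ref{lem:condstrangeBIS}.
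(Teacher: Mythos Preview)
Your proof is correct and reaches the same conclusion via a genuinely different route. The paper argues directly rather than by contradiction: it first defines
\[
m = \inf\{\gamma(s): \gamma\ \text{optimal and nonvanishing for}\ u^{\bar A}(x,t),\ (x,t)\in[x_1,x_2]\times[t_3,t_4]\},
\]
shows $m>0$ by compactness and Lemma~\ref{lem:condstrangeBIS}, and then invokes a stability fact for optimal trajectories---namely that for $\|A-\bar A\|_{L^1}$ small, every optimal $\gamma$ for $u^A(x,t)$ lies within a prescribed $L^\infty$-distance $\delta'$ of some optimal $\tilde\gamma$ for $u^{\bar A}(x,t)$. The choice $\delta'=\min\{m/2,\, x_1\delta/t_4\}$ forces $\tilde\gamma$ to vanish, whence $\tilde s^+\geq\tau$ by Lemma~\ref{lem:condstrangeBIS}, and a direct comparison of the affine pieces gives $s^+\geq\tau-\delta$ quantitatively. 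Your compactness argument instead extracts a limiting optimal trajectory for $u^{\bar A}$ itself and applies Lemma~\ref{lem:condstrangeBIS} to it; this is cleaner and avoids introducing the auxiliary quantity $m$, at the cost of reproducing the limit-passage machinery from Proposition~\ref{prop:weakstab} (with the minor extension that the terminal times $t_n$ vary). The paper's approach, by contrast, packages that same limit passage into the unstated ``stability of optimal trajectories'' claim and yields an explicit relation between $\ep$ and $\delta$.
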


\begin{proof} 
Set 
\be\label{defmmm}
m= \inf\left\{\gamma(s): \; \begin{array}{l}
\text{there exists} \  (x,t)\in [x_1,x_2]\times [t_3,t_4] \ \text{such that } s\leq t, \\
 \gamma \ \text{is optimal for $u^{\bar A}(x,t)$ and},\\
 \text{ $\gamma$ does not vanish on $[0,t]$}\end{array}\right\}. 
\ee
A standard  compactness argument and Lemma \ref{lem:condstrangeBIS} imply that  $m>0$. Indeed, let $\gamma_n$ and $s_n$ be such that $\gamma_n(s_n)\to m$ as $n\to \infty$  with $\gamma_n$ nonvanishing and optimal for $u^{\bar A}(x_n,t_n)$ for some $(x_n,t_n)\in [x_1,x_2]\times [t_3,t_4]$. Then $\gamma_n$ is a straight-line, so that $\gamma_n(s_n)\geq \min \gamma_n = \min\{x_n, \; \gamma_n(0)\}$. 
\vs
If $m=0$, then $\gamma_n(0)$ must  converge to $0$ since $x_n\geq x_1$. Moreover, up to a subsequence, the $(x_n,t_n)$'s  converge to some $(x,t)$ and the $\gamma_n$'s  converge to a straight line $\gamma$ which is optimal for $u^{\bar A}(x,t)$. Note that $\gamma(0)=0$ and thus $\gamma(s)= sx/t$. So $\gamma$ vanishes, but $\sup\{s : \; \gamma(s)=0\}= 0<\tau$, which contradicts Lemma \ref{lem:condstrangeBIS}.
 \vs

Fix $\delta'=\min\{m/2, x_1\delta/t_4\}$ and let $\ep>0$ be such that, for any measurable $A: [0,T] \to  [A_0,0]$ such that $\|A-\bar A\|_{L^1}\leq \ep$, for any $(x,t)\in (x_1,x_2)\times (t_3,t_4)$ and  any $\gamma$ optimal for $u^A(x,t)$, there exists $\tilde \gamma$ optimal for $u^{\bar A}(x,t)$ such that $\|\gamma-\tilde \gamma\|_{L^\infty} \leq \delta'$. 
\vs
Fix such $A$, $(x,t)\in (x_1,x_2)\times (t_3,t_4)$ and $\gamma$. Assume $u^A(x,t)<u^{A_0}(x,t)$. We know that $\gamma$ vanishes. Set 
$$
s^+=\sup\{s : \; \gamma(s)=0\}.
$$
It follows that  that $\gamma(s)= x(s-s^+)/(t-s^+)$ on $[s^+,t]$. 
\vs
Let $\tilde \gamma$ be optimal for $u^{\bar A}(x,t)$ such that $\|\gamma-\tilde \gamma\|_\infty \leq \delta'$. If $\tilde \gamma$ does not vanish on $[0,t]$, then, in view of the definition of $m$ in \eqref{defmmm},  
$$
\delta' \geq |\tilde \gamma(s^+)-\gamma(s^+)|\geq m,
$$
which contradicts the choice of $\delta'$. Thus $\tilde \gamma$ vanishes and, by Lemma \ref{lem:condstrangeBIS}, 
$$
\tilde s^+= \sup\{s : \; \gamma(s)=0\}\geq \tau. 
$$
If $s^+\leq \tau$, then 
$$
\delta' \geq  |\tilde \gamma(\tilde s^+)-\gamma(\tilde s^+)| \geq x(\tilde s^+-s^+)/(t-s^+) \geq x_1 (\tau-s^+)/t_4. 
$$
Hence 
$$
s^+\geq \tau -t_4\delta'/x_1 \geq \tau - \delta.
$$
\end{proof}

\begin{lem}\label{lem.A=0dansatauBIS} Let $\tau$ defined in  \eqref{deftautauBIS}  and $(a,b)$ be a connected component of $\{u^{\bar A}< u^{A_0}\}$ such that $\tau \in (a,b]$. Then 
$\bar A\equiv 0$ on $(a,\tau)$. 
\end{lem}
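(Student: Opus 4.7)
The strategy mirrors that of Lemma \ref{lem.cdBIS}, with the additional input of Lemma \ref{lem.pptetauBIS}. I argue by contradiction, assuming $\bar A\not\equiv 0$ on $(a,\tau)$, and seek to produce a competitor $A_\ep\in\mathcal{A}_1$ with strictly smaller second moment $\int_0^T sA_\ep(s)\,ds$, contradicting $\bar A\in\mathcal A_2$.

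\smallskip

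\emph{Dichotomy.} I first dispose of the ``already ordered'' case. Suppose $\bar A$ is essentially, on every subinterval of $(a,\tau)$, of the form ``$0$ then $A_0$''. Then $\bar A$ takes values in $\{0,A_0\}$ a.e.\ on $(a,\tau)$ (otherwise an interval where $\bar A$ takes an intermediate value would not be of this form) and is essentially non-increasing, so there is $t_*\in[a,\tau)$ with $\bar A\equiv 0$ on $(a,t_*)$ and $\bar A\equiv A_0$ on $(t_*,\tau)$; since $\bar A\not\equiv 0$, $t_*<\tau$. Combined with $\bar A\equiv A_0$ on $[\tau,T]$ (Lemma \ref{lem.pptetauBIS}), this yields $\bar A\equiv A_0$ on $[t_*,T]$, contradicting the characterization of $\tau$ as the infimum of such thresholds in Lemma \ref{lem.pptetauBIS}. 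Hence I may assume there exists $(t_0,t_0+\ep)\subset(a,\tau)$ on which $\bar A$ is not a.e.\ equal to any function of the ``$0$-then-$A_0$'' form.

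\smallskip

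\emph{Swap construction.} Following Lemma \ref{lem.cdBIS}, define
\[
A_\ep(s)=\begin{cases}\bar A(s)&s\in(0,T)\setminus(t_0,t_0+\ep),\\ 0 &s\in(t_0,\tau_\ep),\\ A_0&s\in(\tau_\ep,t_0+\ep),\end{cases}
\]
with $\tau_\ep\in(t_0,t_0+\ep)$ chosen so that $\int_{t_0}^{t_0+\ep}A_\ep=\int_{t_0}^{t_0+\ep}\bar A$. Then $A_\ep$ is the unique a.e.\ minimizer of $x\mapsto\int_{t_0}^{t_0+\ep}sx(s)\,ds$ among measurable $x:(t_0,t_0+\ep)\to[A_0,0]$ with this fixed integral, so $\int_0^T sA_\ep<\int_0^T s\bar A$ strictly. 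Moreover $\int_{t_0}^t A_\ep\geq\int_{t_0}^t \bar A$ for every $t\in[t_0,t_0+\ep]$.

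\smallskip

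\emph{Verifications.} I now reproduce verbatim the two key verifications of Lemma \ref{lem.cdBIS}: (a) tracing the optimizer $\hat\gamma^-_{x,t}$ of $u^{\bar A}(x,t)$ and invoking the one-sided integral inequality above yields $u^{A_\ep}\le u^{\bar A}$ on $[0,\infty)\times[0,T]$; (b) for $(x,t)\in\{\phi<0\}$ and any optimal $\gamma$ for $u^{A_\ep}(x,t)$ that vanishes, I choose $\ep$ so small that some $\delta\in(0,\tau-(t_0+\ep))$ triggers Lemma \ref{lem.stabiloBIS}, giving $\sup\{s:\gamma(s)=0\}\geq\tau-\delta>t_0+\ep$. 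The argument of Lemma \ref{lem.cdBIS} (using that $u^{A_\ep}\le u^{\bar A}<u^{A_0}$ on $\{0\}\times(a,b)$) then forces $\gamma\equiv 0$ on an interval entirely containing $[t_0,t_0+\ep]$, so the contributions $\int A_\ep\,{\bf 1}_{\{\gamma=0\}}$ and $\int \bar A\,{\bf 1}_{\{\gamma=0\}}$ coincide, whence $u^{A_\ep}(x,t)=u^{\bar A}(x,t)$. Combining (a) and (b), $\mathcal J(A_\ep)\leq \mathcal J(\bar A)$ and $\int_0^TA_\ep=\int_0^T\bar A$, so $A_\ep\in\mathcal A_1$; but $\int_0^T sA_\ep<\int_0^T s\bar A$ contradicts $\bar A\in\mathcal A_2$.

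\smallskip

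The main subtlety is verification (b): I need the vanishing set of $\gamma$ to \emph{cover} $[t_0,t_0+\ep]$ rather than overlap it partially, so that the integrals of $A_\ep$ and $\bar A$ over the null set of $\gamma$ agree. This is where the connected-component structure of $(a,b)$ and the strict inequality $u^{\bar A}<u^{A_0}$ on $\{0\}\times(a,b)$ enter, exactly as in Lemma \ref{lem.cdBIS}.
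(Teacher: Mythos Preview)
Your approach is essentially the paper's: the same swap $A_\ep$ on a subinterval, the same one-sided integral inequality for verification (a), Lemma~\ref{lem.stabiloBIS} for (b), the $\mathcal A_2$-minimality to force the contradiction, and Lemma~\ref{lem.pptetauBIS} to dispose of the ``already ordered'' case. The only real issue is the order of quantifiers in step (b). You first locate a bad interval $(t_0,t_0+\ep)$ from the dichotomy and then write ``I choose $\ep$ so small that some $\delta\in(0,\tau-(t_0+\ep))$ triggers Lemma~\ref{lem.stabiloBIS}''---but at that point $\ep$ is already the length of the interval you found, and Lemma~\ref{lem.stabiloBIS} produces the perturbation threshold as a function of the \emph{input} $\delta$, not the other way round. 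To make your ordering rigorous you would need the extra observation that if $\bar A$ fails to be of $0$-then-$A_0$ form on $(a,\tau)$, then it fails on arbitrarily short subintervals located strictly to the left of $\tau$; this is true (the property ``nonincreasing with values in $\{0,A_0\}$'' is local), but you do not argue it.

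The paper sidesteps this by reversing the order: it first fixes $\delta>0$, obtains $\ep$ from Lemma~\ref{lem.stabiloBIS}, and then shows directly that \emph{every} $(t_0,t_0+\ep)\subset(a,\tau-\delta-\ep)$ is forced, via the swap and $\mathcal A_2$, to satisfy $A_\ep=\bar A$ and hence to be of $0$-then-$A_0$ form. Letting $\delta\to 0$ yields that $\bar A$ is nonincreasing with values in $\{0,A_0\}$ on all of $(a,\tau)$, and then Lemma~\ref{lem.pptetauBIS} forces the threshold to equal $\tau$ itself, i.e.\ $\bar A\equiv 0$ on $(a,\tau)$. This is the same argument as yours, just with the quantifiers in the workable order.
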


\begin{proof}  The argument is very close to the proof of Lemma \ref{lem.cdBIS}.  The key idea is to prove that 
\be\label{takis140}
\bar A  \text{ is essentially nonincreasing on $(a,\tau)$ and takes values in $\{A_0,0\}$ only.} 
\ee
\vs
Assume that \eqref{takis140} holds. By Lemma \ref{lem.pptetauBIS}, there is a sequence of Lebesgue point $(s_n)_{n\in\N}$ for $\bar A$ converging to $\tau^-$ such that $\bar A(s_n)>A_0$. Thus we infer that $\bar A(s_n)=0$ and, as $\bar A$ is essentially nonincreasing, that $\bar A\equiv 0$ on $(a,s_n)$. Letting $n\to \infty$ proves that $\bar A\equiv 0$ in $(a,\tau)$ and completes the proof of the lemma. 
\vs
It remains to verify \eqref{takis140}. 
Fix $\delta>0$ small. Lemma \ref{lem.stabiloBIS} yields an  $\ep>0$ such that, for any measurable $A:[0,T]\to [A_0,0]$ with $\|A-\bar A\|_{L^1}\leq |A_0|\ep$, any optimal trajectory $\gamma$ started from a point  $(x,t)\in (x_1,x_2)\times (t_3,t_4)$ of differentiability of $u^{\bar A}$ with $u^A(x,t)<u^{A_0}(x,t)$ satisfies $\sup\{s: \; \gamma(s)=0\}\geq \tau -\delta$. 
 \vs
Let now $t_0\in (a, \tau-\delta-\ep)$. It needs to be checked that  $\bar A$ is nonincreasing on $(t_0,t_0+\ep)$ and takes values in $\{A_0,0\}$. This is obvious, if  $\bar A \equiv 0$ on $(t_0,t_0+\ep)$ or $\bar A \equiv \bar A$ on  $(t_0,t_0+\ep)$. 
\vs
Assume next  that $\ep^{-1}\int_{t_0}^{t_0+\ep} \bar A\in (A_0,0)$, and let  
$$
 A_\ep(s)= \left\{ \begin{array}{ll}
 \bar A(s) &  \text{on}\; (0,T)\backslash (t_0,t_0+\ep),\\[1mm]
0 & \text{on}\; (t_0,  \tau_\ep),\\[1mm]
A_0 & \text{on}\; ( \tau_\ep ,t_0+\ep),
\end{array}\right.
$$
where $ \tau_\ep\in (t_0, t_0+\ep)$ is such that
\be\label{ajzedrnfjk0TER}
\int_{t_0}^{t_0+\ep}  \bar A(s)ds = \int_{t_0}^{t_0+\ep}   A_\ep(s)ds. 
\ee
We can follow the argument in the proof of Lemma \ref{lem.cdBIS} to infer that $A_\ep$ belongs to $\mathcal A_2$ and, therefore, $A_\ep=\bar A$, and,  thus $\bar A$ is nonincreasing on $(t_0,t_0+\ep)$.  The fact that it takes the values $A_0$ and $0$ now follows.
\end{proof}

We can give now the proof of the main theorem of this subsection.

\begin{proof}[Proof of Theorem \ref{thm.mainexBIS}] Lemma \ref{lem.barA=A0} yields  that $\bar A$ equals $A_0$ on $\{u^{\bar A}(0,\cdot)= u^{A_0}(0,\cdot)\}$, and  let $\tau$ be defined by \eqref{deftautauBIS}. 
\vs
Assume that  $(a,b)$ is the connected component of $\{u^{\bar A}(0,\cdot)<u^{A_0}(0, \cdot)\}$ such that $\tau \in (a, b]$ if it exists.  Then Lemma \ref{lem.barA=A0sincetauBIS} states that $\bar A\equiv A_0$ on $[\tau, T]$ while Lemma \ref{lem.A=0dansatauBIS} shows that $\bar A\equiv 0$ on $(a, \tau)$. 
\vs
If $(a,b)$ is another connected component of $\{u^{\bar A}(0,\cdot)<u^{A_0}(0, \cdot)\}$, then Lemma \ref{lem.cdBIS} shows that $\bar A\equiv 0$ on $(a,\hat \tau)$ and $\bar A\equiv A_0$ on $(\hat \tau,b)$ for some $\hat \tau \in (a,b)$. 
\vs
It follows that $\bar A$ takes values in $\{A_0,0\}$ only. As the number of connected components of $\{u^{\bar A}(0,\cdot)<u^{A_0}(0, \cdot)\}$ is at most countable, the set $\{\bar A=0\}$ consists a.e. in at most a countable number of intervals. 

\end{proof}

\subsection{Optimal trajectories starting from the last connected component} 

The goal of this subsection  is to show that optimal solutions starting from the last connected component of $\{u^{\bar A}(0,\cdot)<u^{A_0}(0, \cdot)\}$ cannot remain for a long time on  $\{x=0\}$. 
\vs

Throughout this part, we work under the  assumption that 
\be\label{takis150}
\text{assume  that \eqref{takis1}, \eqref{takis2}, \eqref{hypdebasephi},  \eqref{condstrange}, \eqref{hyp:minHRHL} and \eqref{hyp:optiuA0} hold.}
\ee

The following lemma  explains the role of assumption \eqref{hyp:optiuA0}. 

\begin{lem}\label{lem.hyp:optiuA0}  Assume \eqref{takis150}. Let $\gamma$ be optimal for $u^{A_0}(0,t)$ for  $t\in (0,T)$. Then $\gamma$ is a  straight line which does not vanish on $[0,t)$.
\end{lem}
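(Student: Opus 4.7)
The plan is to invoke the structural classification of optimal trajectories from Lemma~\ref{takis11BIS} and then rule out the undesired shapes using \eqref{hyp:minHRHL} and \eqref{hyp:optiuA0}. First I would apply Lemma~\ref{takis11BIS} (with $A\equiv A_0$) to an optimal $\gamma$ for $u^{A_0}(0,t)$: it yields a $\tilde \gamma$ of the prescribed form with $J^{A_0}(\tilde \gamma)\le J^{A_0}(\gamma)$, with equality only if $\gamma=\tilde \gamma$, so optimality forces $\gamma=\tilde \gamma$. Thus $\gamma$ itself is either (a) an affine non-vanishing path on $(0,t)$, or (b) affine non-vanishing on $(0,t_1)$, identically zero on $[t_1,t_2]$, and affine non-vanishing on $(t_2,t)$ for some $0\le t_1\le t_2\le t$. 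Since $\gamma(t)=0$ and an affine function non-vanishing on $(t_2,t)$ cannot equal zero at $t$ unless the interval is empty, case (b) forces $t_2=t$, so $\gamma$ is affine from $y:=\gamma(0)$ to $0$ on $[0,t_1]$ and $\gamma\equiv 0$ on $[t_1,t]$.

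Next I would observe that $y=0$ (in either case) gives $\gamma\equiv 0$, contradicting \eqref{hyp:optiuA0}; thus $y\neq 0$. Case (a) with $y\neq 0$ already yields the desired conclusion. It remains to exclude case (b) with $y\neq 0$ and $t_1<t$. I assume for definiteness that $y>0$, so $\gamma>0$ on $[0,t_1)$ and the cost involves $L^R$ (the case $y<0$ is symmetric via $L^L$). Using \eqref{hyp:minHRHL} in the form $L^R(0)=-\min H^R=-A_0$, one computes
\[
J^{A_0}(\gamma)= t_1 L^R(-y/t_1)+(t-t_1)L^R(0)+u_0(y),
\]
which I compare to the cost $J^{A_0}(\tilde \gamma)=tL^R(-y/t)+u_0(y)$ of the straight line $\tilde \gamma(s)=(1-s/t)y$ joining $y$ to $0$.

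Writing $-y/t=(t_1/t)(-y/t_1)+((t-t_1)/t)\cdot 0$ as a convex combination and invoking Jensen's inequality for the convex $L^R$, I obtain
\[
tL^R(-y/t)\le t_1 L^R(-y/t_1)+(t-t_1)L^R(0),
\]
so $J^{A_0}(\tilde \gamma)\le J^{A_0}(\gamma)$. The main technical point is to show that this inequality is strict, which reduces to checking that $L^R$ is not affine on $[-y/t_1,0]$. This should follow from the uniform convexity of $H^R$, which places the open interval $((H^R)'(-R^R),(H^R)'(0))$ of strict convexity of $L^R$ around $0$, so $L^R$ cannot be affine on any interval whose interior meets this neighbourhood. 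The resulting strict inequality contradicts the optimality of $\gamma$, ruling out case (b) and completing the argument.
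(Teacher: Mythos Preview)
Your proof is correct and follows essentially the same approach as the paper's: both reduce to the structural form (you via Lemma~\ref{takis11BIS}, the paper via Proposition~\ref{prop.structopti}), rule out $\gamma\equiv 0$ by \eqref{hyp:optiuA0}, and then use $L^R(0)=-A_0$ together with strict convexity of $L^R$ to show that the straight line from $(\gamma(0),0)$ to $(0,t)$ beats the broken path. Your explicit discussion of where $L^R$ is strictly convex is a slight gain in precision over the paper, which simply invokes ``strict convexity of $L^R$''.
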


\begin{proof}  We argue by contradiction. If $\gamma$ vanishes in $[0,t)$, then, in view of the structure of optimal trajectories  described in Proposition \ref{prop.structopti}, there exists $a\in [0,t)$ such that $\gamma$ vanishes on $[a,t]$ and, if $a>0$,  is a nonvanishing straight line in $(0,a)$. 
\vs
It follows from  \eqref{hyp:optiuA0}  that $a>0$. 
\vs
Assuming, to fix the ideas, that $\gamma(0)>0$ and using  again dynamic programming we find
\begin{align*}
u^{A_0} (0,t)  & = \int_0^a L^R(\dot \gamma) ds  -\int_a^t A_0 ds  +u_0(\gamma(0)) \\
&= \int_0^t L^R(\dot \gamma) ds +u_0(\gamma(0)) > \int_0^t L^R(\dot{\tilde \gamma}) ds+u_0(\tilde \gamma(0)),
\end{align*}
where $\tilde \gamma$ is the straight line connecting $(\gamma(0),0)$ and $(0,t)$, the second equality holds because $A_0= -L^R(0)$ and the strict inequality is due to the strict convexity  $L^R$. 
\vs
The conclusion  is impossible,  since $\tilde \gamma$ is admissible. 

\end{proof}

We now state  a technical lemma which shows that all connected components of \\
$\{u^{\bar A}(0,\cdot)<u^{A_0}(0, \cdot)\}$  must somehow ``see'' $\{\phi>0\}$. 

\begin{lem}\label{lem:variante52} Assume \eqref{takis150}, fix a   be a connected component of $\{u^{\bar A}(0,\cdot)<u^{A_0}(0, \cdot)\}$  and let 
\be\label{takis170}
\mathcal O=\left\{(x,t) : \; \begin{array}{l}
u^{\bar A}(x,t)<u^{A_0}(x,t) \ \text{and}\\[1mm]
\text{there exists} \ \gamma \; \text{optimal for } u^{\bar A}(x,t) \ \text{such that}\\[1mm]
\sup\{s : \; \gamma(s)\}\in (a,b)\end{array}\right \}.
\ee
Then,  $|\mathcal O\cap \{\phi> 0\}|>0$.
\end{lem}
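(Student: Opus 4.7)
The approach is proof by contradiction. Assume $|\mathcal O\cap\{\phi>0\}|=0$ and set
\[
\tilde A(s)=\begin{cases}A_0&\text{if } s\in(a,b),\\ \bar A(s)&\text{otherwise.}\end{cases}
\]
Since $\tilde A\le \bar A$, the representation formula immediately gives $u^{\tilde A}\ge u^{\bar A}$ pointwise. The strategy is to show that in fact $u^{\tilde A}=u^{\bar A}$ a.e.\ on $\{\phi>0\}$, conclude that $\tilde A$ is another minimizer, and then contradict $\bar A\in\mathcal A_1$ using the bang-bang structure already in hand.

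For the central claim, at any point with $u^{\bar A}=u^{A_0}$ the chain $A_0\le\tilde A\le\bar A$ gives $u^{\bar A}\le u^{\tilde A}\le u^{A_0}=u^{\bar A}$. Otherwise $u^{\bar A}(x,t)<u^{A_0}(x,t)$ and by assumption $(x,t)\notin\mathcal O$, so every optimal $\gamma$ for $u^{\bar A}(x,t)$ has $\tau^+\notin(a,b)$. By Proposition~\ref{prop.structopti}, $u^{\bar A}(0,\tau^-)=u^{A_0}(0,\tau^-)$, which places $\tau^-$ outside the open set $\{u^{\bar A}(0,\cdot)<u^{A_0}(0,\cdot)\}$ and so $\tau^-\notin(a,b)$ either. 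The vanishing interval $[\tau^-,\tau^+]$ is then either disjoint from $(a,b)$, in which case $J^{\tilde A}(\gamma)=J^{\bar A}(\gamma)$ yields the inequality directly, or it contains $[a,b]$ so that $\gamma(b)=0$; in the latter case I splice, using that $\gamma|_{[0,b]}$ is optimal for $u^{\bar A}(0,b)$ by dynamic programming and that $\tilde A=\bar A$ on $[b,T]$, to obtain
\[
u^{\tilde A}(x,t)\le u^{\tilde A}(0,b)+J^{\bar A}(\gamma|_{[b,t]})\le u^{A_0}(0,b)+J^{\bar A}(\gamma|_{[b,t]})=u^{\bar A}(x,t),
\]
where the last equality uses $u^{\bar A}(0,b)=u^{A_0}(0,b)$ since $b$ is the right endpoint of the connected component.

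Granting the claim, $\mathcal J(\tilde A)\le\mathcal J(\bar A)$: the integrands agree on $\{\phi>0\}$, vanish on $\{\phi=0\}$, and satisfy $\phi u^{\tilde A}\le\phi u^{\bar A}$ on $\{\phi<0\}$. Hence $\tilde A\in\mathcal A$. To close, I invoke the bang-bang structure established by Theorem~\ref{thm.mainexBIS}: by Lemmas~\ref{lem.cdBIS} and~\ref{lem.A=0dansatauBIS}, the minimizer $\bar A\in\mathcal A_2$ satisfies $\bar A\equiv 0$ on a nonempty initial subinterval of $(a,b)$. Since $\tilde A=A_0<0=\bar A$ there on a set of positive measure, $\int_0^T\tilde A<\int_0^T\bar A$, contradicting $\bar A\in\mathcal A_1$.

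The main obstacle is the ``straddling'' case $[\tau^-,\tau^+]\supset[a,b]$ in the central claim: here $\gamma$ itself becomes a strictly worse competitor for $\tilde A$, so one cannot merely substitute $\tilde A$ for $\bar A$ in the representation formula and must instead splice in an optimal path to $(0,b)$ for $\tilde A$, closing the estimate through the boundary identity $u^{\bar A}(0,b)=u^{A_0}(0,b)$ at the endpoint of the connected component.
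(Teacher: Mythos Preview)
Your proof is correct and follows essentially the same approach as the paper: introduce the same auxiliary control $\tilde A$, establish that $u^{\tilde A}=u^{\bar A}$ outside $\mathcal O$ (the paper proves this on all of $((0,\infty)\times(0,T))\setminus\mathcal O$, you only on $\{\phi>0\}$ minus a null set, but either suffices), conclude $\mathcal J(\tilde A)\le\mathcal J(\bar A)$, and then contradict $\bar A\in\mathcal A_1$. The only substantive difference is in the final contradiction: the paper deduces $\bar A=\tilde A=A_0$ on $(a,b)$ and then reaches a contradiction directly via Lemma~\ref{lem.hyp:optiuA0} and strict convexity of $L^{L/R}$, whereas you invoke the already-established bang-bang structure (Lemmas~\ref{lem.cdBIS} and~\ref{lem.A=0dansatauBIS}) to exhibit a subinterval of $(a,b)$ where $\bar A\equiv 0>A_0=\tilde A$; both routes are valid and yours is a legitimate shortcut given that those lemmas are already in hand and do not depend on the present one.
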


\begin{proof} Define
 $$
 \tilde A(t)=\left\{\begin{array}{ll}
 A_0 & \text{on $(a,b)$,}\\[1mm]
 \bar A(t) & \text{otherwise.}
\end{array}\right. 
$$
Since $\tilde A\leq \bar A$ we have $u^{\bar A}\leq u^{\tilde A}$. 
We claim that 
\be\label{zliekj:rdfs,BIS}
u^{\tilde A}(x,t) = u^{\bar A}(x,t) \ \   \text{in} \ \  ((0,\infty)\times (0,T))\backslash \mathcal O.
\ee
Fix  $(x,t)\in ((0,\infty)\times (0,T))\backslash \mathcal O$, let $\gamma_{x,t}$ be an optimal path for $u^{\bar A}(x,t)$  and set 
$$
\tau^+=\sup\{s\in [0,t] : \; \gamma_{x,t}(s)=0\}.
$$
In view of the definition of $\mathcal O$,  observe that $\tau^+\notin (a,b)$. 
\vs
If $\gamma_{x,t}(s)\neq 0$ for any $s\in(a,b)$, then 
$$
u^{\bar A}(x,t)= J^{\bar A}(\gamma_{x,t}) = J^{\tilde A}(\gamma_{x,t}) \geq u^{\tilde A}(x,t)\geq u^{\bar A}(x,t),  
$$
which proves \eqref{zliekj:rdfs,BIS} in this case. 
\vs
We assume next  that  $\gamma_{x,t}$ vanishes at a point in $(a,b)$. This implies  that $\tau^+\geq b$ and, thus,  in view of the structure of the minimizers, $\gamma_{x,t}$ vanishes on $[b, \tau^+]$.
\vs
Define 
$$
\tilde \gamma_{x,t}(s)= \left\{\begin{array}{ll}
 \gamma^{A_0}_{0,b}(s) & \text{on  $[0,b]$,}\\[1mm]
\gamma_{x,t}(s) & \text{on $[b,t]$, }
\end{array}\right. 
$$
where $ \gamma^{A_0}_{0,b}$ is optimal for $u^{A_0}(0,b)$, and recall that, in view of  Lemma \ref{lem.hyp:optiuA0},  $ \gamma^{A_0}_{0,b}$ vanishes only at $t=b$. 
\vs
Then by dynamic programming and the optimality of $\gamma_{x,t}$, we find 
\begin{align*}
u^{\bar A}(x,t) &= \int_{b}^t \big[L^R(\dot \gamma_{x,t}){\bf 1}_{\{\gamma_{x,t}(s)> 0\}}-\bar A(s){\bf 1}_{\{\gamma_{x,t}(s)=0\}}\big] ds + u^{\bar A}(0,b)\\
& =  \int_{b}^t \big[L^R(\dot \gamma_{x,t}){\bf 1}_{\{\gamma_{x,t}(s)> 0\}}-\tilde A(s){\bf 1}_{\{\gamma_{x,t}(s)=0\}}\big]ds + u^{A_0}(0,b)\\
& = \int_{0}^t \big[ L^R(\dot{\tilde \gamma}_{x,t}){\bf 1}_{\{\tilde \gamma_{x,t}(s)> 0\}}+  L^L(\dot{\tilde \gamma}_{x,t}){\bf 1}_{\{\tilde \gamma_{x,t}(s)< 0\}}-\tilde A(s){\bf 1}_{\{\tilde \gamma_{x,t}(s)=0\}}\big] ds + u_0(\tilde \gamma_{x,t}(0))\\
& =  J^{\tilde A}(\tilde \gamma_{x,t}) , 
\end{align*}
where the second equality holds because $\bar A=\tilde A$ on $[b, t]$ and $u^{\bar A}(0,b)= u^{A_0}(0,b)$ and the third one because $\gamma^{A_0}_{0,b}$ does not vanish on $[0, b)$. 
\vs
Thus
$$
u^{\bar A}(x,t)  = J^{\tilde A}(\tilde \gamma_{x,t}) \geq u^{\tilde A}(x,t)\geq u^{\bar A}(x,t), 
$$
and, hence,  \eqref{zliekj:rdfs,BIS} is true also in this case.
\vs
Assume that $|\mathcal O\cap \{\phi> 0\}|= 0$. Then, in view of   \eqref{zliekj:rdfs,BIS} and the fact that $u^{\bar A}\leq u^{\tilde A}$ and $\phi\leq 0$ a.e. in $\mathcal O$, we have 
\begin{align*}
\mathcal J(\tilde A)-\mathcal J(\bar A)= \int_{\mathcal O} \phi(x,t) (u^{\tilde A}(x,t)-u^{\bar A}(x,t)) dxdt \leq 0,
\end{align*}
and, thus,  $\mathcal J(\tilde A)\leq \mathcal J(\bar A)$, which proves that $\tilde A$ is also optimal. 
\vs
Moreover, since $\bar A\in \mathcal A_1$, we have $\int_0^T \tilde A \geq \int_0^T\bar A$, which implies that $\bar A\equiv \tilde A$ since $\bar A\geq\tilde A$. It follows  $\bar A=\tilde A=A_0$ in $(a,b)$, which, as we show next,   is not possible. 
\vs
Fix $t\in (a,b)$ and let $\gamma_{0,t}$ and $\gamma^{A_0}_{0,a}$  be optimal  for respectively $u^{\bar A}(0,t)$ and
for $u^{A_0}(0,a)$. We recall that  $ \gamma^{A_0}_{0,a}$ vanishes only at $t=a$ by Lemma \ref{lem.hyp:optiuA0} and define
$$
\tilde \gamma_{x,t}(s)= \left\{\begin{array}{ll}
 \gamma^{A_0}_{0,a}(s) & \text{on  $[0,a]$,}\\[1mm]
\gamma_{0,t}(s) & \text{on $[a,t]$.}
\end{array}\right. 
$$
Assume that $ \gamma^{A_0}_{0,a}(0)>0$. 
Then, as $\gamma_{0,t}$ vanishes on $[a,t]$ by Proposition \ref{prop.structopti} and $\bar A=A_0=-L^R(0)$ in $(a,b)$, 
\begin{align*}
u^{\bar A}(0,t) &= - \int_{a}^t  A_0 ds + u^{\bar A}(0,a) =   \int_{a}^t L^R(\dot \gamma_{x,t}) ds + u^{A_0}(0,a)\\
& = \int_{0}^t L^R(\dot{\tilde \gamma}_{x,t}) ds + u_0(\tilde \gamma_{x,t}(0)) >  \int_{0}^t L^R(\dot\gamma) ds + u_0(\gamma(0)) , 
\end{align*}
where $\gamma$ is the straight-line between $(0,\tilde \gamma_{x,t}(0))$ and $(0,t)$. Note that the second equality holds because $\gamma^{A_0}_{0,a}$ is positive on $[0,a)$ and the strict inequality because $L^R$ is strictly convex. 
\vs
Thus 
$$
u^{\bar A}(0,t) > \int_{0}^t L^R(\dot\gamma) ds + u_0(\gamma(0))= J^{A_0}(\gamma) \geq u^{A_0}(0,t),
$$
which is impossible since $ u^{A_0}\geq u^{\bar A}$. 
\vs
If  $ \gamma^{A_0}_{0,a}(0)<0$, then we argue in the same way with $L^L$ instead of $L^R$ using  that $A_0=-L^L(0)$. 
\vs
The fact that  $|\mathcal O\cap \{\phi> 0\}|>0$ now follows.
\end{proof}

From now on,  we denote by $(\bar a, \bar b)$ the last connected component of $\{u^{\bar A}(0, \cdot)< u^{A_0}(0, \cdot)\}$ if there is one, and recall that $\tau\in (\bar a, \bar b]$, where $\tau$ is defined by \eqref{deftautauBIS}. 
\vs 
Let $\hat \gamma^+_{0, \bar b}$ be the  optimal solution for $u^{\bar A}(0, \bar b)$ which remains the most on $\{x=0\}$ and define 
\be\label{takis160}
\bar s^-=\inf\{s : \; \hat \gamma^+_{0, \bar b}(s)=0\}.
\ee

\begin{lem}\label{lem.hatgammaphineg} Assume \eqref{takis150}. Fix   $(x,t)\in \R\times (0,T)$ and let $\gamma$ be an optimal trajectory for $u^{\bar A}(x,t)$. If $\gamma$ vanishes at some point in $(0,t)$ and $\sup\{s :  \; \gamma(s)=0\}\in [\bar s^-, \bar b]$, then $\{s :  \; \gamma(s)= 0\}\subset [\bar s^-, \bar b]$. In particular, if $(x,t)\in (x_1,x_2)\times (t_3,t_4)$ is a point of differentiability of $u^{\bar A}$ such that $u^{\bar A}(x,t)<u^{A_0}(x,t)$, then 
$$
\sup\{s : \; \hat \gamma^+_{x,t}(s)=0\} \in [\tau,\bar  b) \ \ \text{and} \ \ \inf\{s :  \; \hat \gamma^+_{x,t}(s)=0\} =\bar s^-.
$$
\end{lem}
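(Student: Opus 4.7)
The plan is to prove both assertions by contradiction, constructing two concatenated competitors each time---one anchored at the given terminal and one anchored at $(0,\bar b)$---whose cost inequalities collide in opposite directions and force a common equality. The resulting auxiliary optimiser then violates the minimality encoded in the definition of $\hat\gamma^+$ (which among optimisers first-touches $\{x=0\}$ as early as possible). A preliminary observation to record is $\bar s^-\leq\bar a$: by Proposition~\ref{prop.structopti} applied to $\hat\gamma^+_{0,\bar b}$ one has $u^{\bar A}(0,\bar s^-)=u^{A_0}(0,\bar s^-)$, so $\bar s^-\notin(\bar a,\bar b)$; and $\bar s^-=\bar b$ is impossible since then $\hat\gamma^+_{0,\bar b}$ would be an affine non-vanishing competitor for $u^{A_0}(0,\bar b)$, forcing the equality $u^{\bar A}(0,\bar b)=u^{A_0}(0,\bar b)$ against the definition of $(\bar a,\bar b)$.

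For the first assertion, set $\tau^{\pm}=\inf/\sup\{s:\gamma(s)=0\}$. By Lemma~\ref{takis11BIS} applied to the optimiser $\gamma$ (together with Proposition~\ref{prop.structopti}), $\gamma$ vanishes exactly on $[\tau^-,\tau^+]$, is affine outside, and $u^{\bar A}(0,\tau^-)=u^{A_0}(0,\tau^-)$. Assuming toward contradiction that $\tau^-<\bar s^-$, I consider the terminal-$(x,t)$ competitor
\[
\tilde\gamma \;=\; \hat\gamma^+_{0,\bar b}\big|_{[0,\bar s^-]}\;\cup\;0\big|_{[\bar s^-,\tau^+]}\;\cup\;\gamma\big|_{[\tau^+,t]},
\]
whose cost, by dynamic programming combined with the two identities $u^{\bar A}=u^{A_0}$ at $\bar s^-$ and $\tau^-$, satisfies
\[
J^{\bar A}(\tilde\gamma)-J^{\bar A}(\gamma)\;=\;u^{A_0}(0,\bar s^-)-u^{A_0}(0,\tau^-)+\int_{\tau^-}^{\bar s^-}\bar A\;\geq\;0
\]
by optimality of $\gamma$. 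Simultaneously, the terminal-$(0,\bar b)$ competitor $\gamma_*=\gamma|_{[0,\tau^-]}\cup 0|_{[\tau^-,\bar b]}$ has cost $u^{A_0}(0,\tau^-)-\int_{\tau^-}^{\bar b}\bar A$, which must majorise $u^{\bar A}(0,\bar b)=u^{A_0}(0,\bar s^-)-\int_{\bar s^-}^{\bar b}\bar A$ (the latter read off $\hat\gamma^+_{0,\bar b}$); this yields the reverse inequality. The two therefore collapse to a common equality, and $\gamma_*$ becomes an optimiser of $u^{\bar A}(0,\bar b)$ whose first-touch time is $\tau^-<\bar s^-$, contradicting the defining minimality of $\bar s^-$ for $\hat\gamma^+_{0,\bar b}$.

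For the second assertion, write $\sigma^{\pm}=\inf/\sup\{s:\hat\gamma^+_{x,t}(s)=0\}$. Proposition~\ref{prop.structopti} combined with $u^{\bar A}(x,t)<u^{A_0}(x,t)$ gives $\sigma^-<\sigma^+$ and $u^{\bar A}(0,\sigma^+)<u^{A_0}(0,\sigma^+)$, and Lemma~\ref{lem:condstrangeBIS} gives $\sigma^+\geq\tau$. Since $(\bar a,\bar b)$ is the \emph{last} connected component of $\{u^{\bar A}(0,\cdot)<u^{A_0}(0,\cdot)\}$ and $\tau>\bar a$, this forces $\sigma^+\in[\tau,\bar b)$. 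Applying the first assertion to $\gamma=\hat\gamma^+_{x,t}$ (noting $\sigma^+\in[\bar s^-,\bar b]$ via $\bar s^-\leq\bar a<\tau$) yields $\sigma^-\geq\bar s^-$. To rule out $\sigma^->\bar s^-$, I rerun the same two-competitor bookkeeping with the roles of $\tau^-$ and $\bar s^-$ swapped: the terminal-$(0,\bar b)$ competitor $\hat\gamma^+_{x,t}|_{[0,\sigma^-]}\cup 0|_{[\sigma^-,\bar b]}$ upper-bounds $\int_{\bar s^-}^{\sigma^-}\bar A$, while the terminal-$(x,t)$ competitor $\hat\gamma^+_{0,\bar b}|_{[0,\bar s^-]}\cup 0|_{[\bar s^-,\sigma^+]}\cup\hat\gamma^+_{x,t}|_{[\sigma^+,t]}$ is then forced to be optimal for $u^{\bar A}(x,t)$ with first-touch time $\bar s^-<\sigma^-$, contradicting the minimality of $\sigma^-$ in the definition of $\hat\gamma^+_{x,t}$. \textbf{The main obstacle} lies in aligning the optimality and feasibility inequalities so that they collapse to a common equality, thereby upgrading each concatenated competitor to a bona fide optimiser; this hinges on the first-touch identities $u^{\bar A}(0,\cdot)=u^{A_0}(0,\cdot)$ from Proposition~\ref{prop.structopti} making the two cost ledgers match exactly, and on the preliminary inequality $\bar s^-\leq\bar a$ that justifies using $\hat\gamma^+_{0,\bar b}|_{[0,\bar s^-]}$ as a valid ``early'' piece.
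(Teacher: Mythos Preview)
Your two-competitor bookkeeping is correct and reaches the same contradictions as the paper, but it is more elaborate than needed. The paper splices directly at a single point where both trajectories sit at $x=0$: for the first assertion it forms $\gamma|_{[0,s^+]}\cup\hat\gamma^+_{0,\bar b}|_{[s^+,\bar b]}$, and for the second $\hat\gamma^+_{0,\bar b}|_{[0,\bar a]}\cup\hat\gamma^+_{x,t}|_{[\bar a,t]}$. Since both initial segments are, by dynamic programming, optimal for the \emph{same} subproblem $u^{\bar A}(0,s^+)$ (respectively $u^{\bar A}(0,\bar a)$), they have equal cost, so the spliced path is automatically optimal and the contradiction with the extremality of $\hat\gamma^+$ is immediate. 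Your route recovers this equality of costs by sandwiching it between two opposite inequalities coming from two separate competitors; this works, but the single-splice argument is shorter.

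There is one genuine slip in your preliminary observation. You argue that $\bar s^-=\bar b$ is impossible because it would ``force the equality $u^{\bar A}(0,\bar b)=u^{A_0}(0,\bar b)$ against the definition of $(\bar a,\bar b)$''. But $(\bar a,\bar b)$ is an \emph{open} connected component of $\{u^{\bar A}(0,\cdot)<u^{A_0}(0,\cdot)\}$, so the equality $u^{\bar A}(0,\bar b)=u^{A_0}(0,\bar b)$ \emph{does} hold at the endpoint $\bar b$ and yields no contradiction. The inequality $\bar s^-\leq\bar a$ you want is nevertheless true: once you have established the second assertion's $\sigma^-\leq\bar a$ (from Proposition~\ref{prop.structopti}) and have in hand the concatenation $\hat\gamma^+_{x,t}|_{[0,\sigma^+]}\cup 0|_{[\sigma^+,\bar b]}$, a limit of optimal trajectories for $u^{\bar A}(0,t)$ as $t\uparrow\bar b$ (each vanishing from a point $\leq\bar a$) furnishes an optimiser for $u^{\bar A}(0,\bar b)$ with first-touch time $\leq\bar a$, so $\bar s^-\leq\bar a$ by definition. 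You should replace your endpoint argument with something along these lines.
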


\begin{proof} Fix $\gamma$ as in the statement 
and assume  that $s^+=\sup\{s : \; \gamma(s)=0\}\in [\bar s^-, \bar b]$. Let $s^-= \inf\{s :  \;  \gamma(s)=0\}$. Then, by the structure of minimizers,  $\{s : \; \gamma(s)= 0\}=[s^-,s^+]$. 
\vs
If we had $s^-<\bar s^-$, then using  dynamic programming, we can  replace $\hat \gamma^+_{0, \bar b}$ by $\gamma$ on $[0, s^+]$ to find an optimal trajectory for $u^{\bar A}(0, \bar b)$ which remains  in $\{x=0\}$ longer  than $\hat \gamma^+_{0, \bar b}$, a contradiction with the definition of $\hat \gamma^+_{0, \bar b}$. Thus $s^-\geq \bar s^-$. 
\vs

Let  $(x,t)\in (x_1,x_2)\times (t_3,t_4)$ be a point of differentiability of $u^{\bar A}$ such that $u^{\bar A}(x,t)<u^{A_0}(x,t)$, and  set $s^+ =\sup\{s :\; \hat \gamma^+_{x,t}(s)=0\}$ and $s^-= \inf\{s : \; \hat \gamma^+_{x,t}(s)=0\}$.
\vs
Proposition \ref{prop.structopti} yields that $u^{\bar A}(0,s^+)<u^{A_0}(0, s^+)$ and  Lemma \ref{lem:condstrangeBIS} implies that $s^+\geq \tau$. Thus, if $(a,b)$ is the connected component of $\{u^{\bar A}(0, \cdot)< u^{A_0}(0, \cdot)\}$ containing $s^+$, we have $b\geq \tau$, so that $(a,b)=(\bar a, \bar b)$. Note that $s^-\leq \bar a$. 
\vs
By the first claim,  we also have that $\{s, \; \hat \gamma^+_{x,t}(s)\} \subset [\bar s^-, \bar b]$. If  $s^->\bar s^-$, we can replace $\hat \gamma^+_{x,t}$ by $\hat \gamma^+_{0,\bar b}$ on $[0,\bar a]$ to find  an optimal solution for $u^{\bar A}(x,t)$ which remains  in $\{x=0\}$ longer than $\hat \gamma^+_{x,t}$, a contradiction to  the definition of $\hat \gamma^+_{x,t}$.  Hence $s^-=\bar s^-$. 

\end{proof}

We investigate next the sign of the quantity 
$$
\mathcal H(s)= \int_s^T \int_0^\infty \phi(x,t) {\bf 1}_{\{\hat \gamma^+_{x,t}(s)=0\}}dxdt,
$$
which appears in the optimality condition.

\begin{lem} \label{lem.HnonincBIS}  Assume \eqref{takis150}. The map $s\to \mathcal H(s)$ is nonincreasing  and nonnegative on $(\bar s^-,\tau)$. 
\end{lem}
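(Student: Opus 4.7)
The plan is to decompose $\phi = \phi_+ - \phi_-$ into nonnegative parts, supported respectively in $(x_1,x_2)\times(t_1,t_2)$ and $(x_1,x_2)\times(t_3,t_4)$, and split $\mathcal H = \mathcal H_+ - \mathcal H_-$ accordingly, with
$$\mathcal H_\pm(s)=\int_s^T\int_0^\infty \phi_\pm(x,t)\,{\bf 1}_{\{\hat\gamma^+_{x,t}(s)=0\}}\,dxdt.$$
I will show that $\mathcal H_-$ is constant on $(\bar s^-,\tau)$ and that $\mathcal H_+$ is nonincreasing there, which yields the monotonicity of $\mathcal H$; nonnegativity will then be a short consequence of Corollary~\ref{cor.lastinter}.

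The technical core is the following claim: \emph{whenever $\hat\gamma^+_{x,t}$ vanishes with $s^+(x,t):=\sup\{s:\hat\gamma^+_{x,t}(s)=0\}\in[\bar s^-,\bar b]$, one has $s^-(x,t):=\inf\{s:\hat\gamma^+_{x,t}(s)=0\}=\bar s^-$.} To prove it, I concatenate $\hat\gamma^+_{0,\bar b}|_{[0,s^+]}$ with $\hat\gamma^+_{x,t}|_{[s^+,t]}$: since $\hat\gamma^+_{0,\bar b}$ vanishes on the whole of $[\bar s^-,\bar b]\ni s^+$, dynamic programming ensures that both restrictions are optimal for their respective endpoint problems, so the concatenation is optimal for $u^{\bar A}(x,t)$ and vanishes on $[\bar s^-,s^+]$. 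The minimality of $t_1$ in the definition of $\hat\gamma^+_{x,t}$ then gives $s^-(x,t)\leq\bar s^-$, which combined with the first part of Lemma~\ref{lem.hatgammaphineg} yields equality.

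Using this claim, for $\mathcal H_-$: fix $(x,t)\in(x_1,x_2)\times(t_3,t_4)$. If $\hat\gamma^+_{x,t}$ vanishes, Lemma~\ref{lem:condstrangeBIS} forces $s^+\geq\tau$, and $s^+<\bar b$ holds because Proposition~\ref{prop.structopti} places $s^+$ inside a connected component of $\{u^{\bar A}(0,\cdot)<u^{A_0}(0,\cdot)\}$, each of which lies in $(-\infty,\bar b)$. Hence $s^+\in[\tau,\bar b)\subset[\bar s^-,\bar b]$, the claim applies, and the vanishing interval is $[\bar s^-,s^+]$ with $s^+\geq\tau$; for every $s\in(\bar s^-,\tau)$ the indicator equals $1$. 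If $\hat\gamma^+_{x,t}$ does not vanish, the indicator is $0$. Either way it is independent of $s\in(\bar s^-,\tau)$, so $\mathcal H_-$ is constant. For $\mathcal H_+$: fix $(x,t)\in\text{supp}(\phi_+)$. If $\hat\gamma^+_{x,t}$ does not vanish, or vanishes with $s^+<\bar s^-$, the indicator is identically zero on $(\bar s^-,\tau)$; otherwise $s^+\in[\bar s^-,\bar b)$ (again by the component argument), the claim gives $s^-=\bar s^-$, and the indicator coincides with ${\bf 1}_{\{s\leq s^+\}}$, nonincreasing in $s$. Writing $\mathcal H_+(s_1)-\mathcal H_+(s_2)$ for $\bar s^-<s_1<s_2<\tau$ as the sum of an integral over $(s_1,s_2]$ and one over $(s_2,T]$ produces two nonnegative integrals against $\phi_+\geq 0$, establishing the nonincreasing property.

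For nonnegativity, Corollary~\ref{cor.lastinter} applied with $f\equiv 0$, together with Lemma~\ref{lem.A=0dansatauBIS} (which gives $\bar A=0$ a.e.\ on $(\bar a,\tau)$), yields $\mathcal H\geq 0$ a.e.\ on $(\bar a,\tau)$. For an arbitrary $s\in(\bar s^-,\tau)$, monotonicity lets me pick $s'\in(\max(s,\bar a),\tau)$ with $\mathcal H(s')\geq 0$, and conclude $\mathcal H(s)\geq\mathcal H(s')\geq 0$. The main obstacle is the key claim: the concatenation must first be shown to be globally optimal via dynamic programming, and only then can one exploit the ``longest at zero'' characterization of $\hat\gamma^+_{x,t}$ to drive the infimum down to $\bar s^-$.
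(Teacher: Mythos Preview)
Your proof is correct and follows essentially the same approach as the paper's: both hinge on the fact that, whenever $\hat\gamma^+_{x,t}$ vanishes with $s^+\in[\bar s^-,\bar b]$, the lower endpoint of the vanishing interval is exactly $\bar s^-$ (your ``key claim'', which the paper states in the form ``$\hat\gamma^+_{x,t}$ vanishes on an interval of the form $[\bar s^-,h]$'' and justifies via Lemma~\ref{lem.hatgammaphineg}). The only difference is organizational---you split $\mathcal H=\mathcal H_+-\mathcal H_-$ and show $\mathcal H_-$ constant and $\mathcal H_+$ nonincreasing, while the paper computes the increment $\mathcal H(s)-\mathcal H(s+\delta)=(H_1)+(H_2)$ directly, using a speed bound to kill $(H_1)$ and the same sign-of-$\phi$ case analysis for $(H_2)$; the nonnegativity step via Corollary~\ref{cor.lastinter} and Lemma~\ref{lem.A=0dansatauBIS} is identical.
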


\begin{proof} Let $s\in (\bar s^-,\tau)$ and $\delta>0$ small so  that $s+\delta<\tau$. We have 
\begin{align*}
\mathcal H(s)& =\mathcal H(s+\delta) + \int_s^{s+\delta}\int_0^\infty \phi(x,t) {\bf 1}_{\{\hat \gamma^+_{x,t}(s)=0\}}dxdt \\
& + \int_{s+\delta}^T \int_0^\infty
\phi(x,t) ({\bf 1}_{\{\hat \gamma^+_{x,t}(s)=0\}}-{\bf 1}_{\{\hat \gamma^+_{x,t}(s+\delta)=0\}})dxdt \\[1mm]
&= \mathcal H(s+\delta) +(H_1)+(H_2).
\end{align*}
For $(H_1)$, we recall that, for any  $(x,t)\in \R_+\times (s,s+\delta)$ such that  $\phi(x,t)\neq0$, we know that  $\hat \gamma^+_{x,t}(t)=x\geq x_1$, and, thus, if we choose $\delta$ small enough, 
$$
\hat \gamma^+_{x,t}(s)\geq x_1-\|(H^R)'\|_\infty(t-s) \geq x_1-\|(H^R)'\|_\infty\delta>0,
$$
Thus $\hat \gamma^+_{x,t}(s)\neq 0$ and, hence,  $(H_1)=0$. 
\vs
For $(H_2)$, let $(x,t)$ be a point of differentiability of $u^{\bar A}$ with $\phi(x,t)\neq0$, such that either $\hat \gamma^+_{x,t}(s)=0$ or  $\hat \gamma^+_{x,t}(s+\delta)=0$. 
\vs 
If $\phi(x,t)<0$, then, by Lemma \ref{lem.hatgammaphineg},  $\hat \gamma^+_{x,t}$ vanishes at least on $[\bar s^-,\tau]$. Thus $\hat \gamma^+_{x,t}(s)=\hat \gamma^+_{x,t}(s+\delta)=0$ and the integrant of $(H_2)$ vanishes for such $(x,t)$. 
\vs
If $\phi(x,t)>0$, then $\hat \gamma^+_{x,t}$ vanishes on an interval of the form $[\bar s^-,h]$ with $h\geq s$, and, thus, if $\hat \gamma^+_{x,t}(s+\delta)=0$, then $\hat \gamma^+_{x,t}(s)=0$. In this case the integrant of $(H_2)$ at $(x,t)$ is nonnegative. Hence $(H_2)\geq 0$. 
\vs

In view of the above, we have  $\mathcal H(s)\geq \mathcal H(s+\delta)$, that is, the map  $s\to \mathcal H(s)$ is nonincreasing on $(\bar s^-,\tau)$. 
\vs

Now recall that, in view of Lemma~\ref{lem.A=0dansatauBIS},   $\bar A\equiv 0$ on $(\bar a, \tau)$. Since  $(\bar a, \bar b)$ is the last connected component of  $\{u^{\bar A}(0, \cdot)< u^{A_0}(0, \cdot)\}$, Corollary \ref{cor.lastinter} yields that $\mathcal H$ is nonnegative on $(\bar a, \tau)$. As  $s\to \mathcal H(s)$ is nonincreasing on $(\bar s^-,\tau)$, we infer that $\mathcal H$ is nonnegative  on $(\bar s^-, \tau)$. 
\end{proof}

We proceed with our investigation of the last connected component $(\bar a, \bar b)$ in the case where $\hat \gamma^+_{0, \bar b}$ remains in $\{x=0\}$ before $\bar a$.  

\begin{lem}\label{lem.seqtn} Assume \eqref{takis150} and suppose  that $\bar s^-<\bar a$.
Then there exists a sequence $(t_n)_{n\in \N}$ such that, $n\to \infty$,  $t_n\to \bar a^-$ and $u^{\bar A}(0,t_n)<u^{A_0}(x,t_n)$.
\end{lem}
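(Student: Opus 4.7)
I would argue by contradiction: suppose no such sequence exists. Then there is $\eta>0$, which I take small enough so that $\bar a - \eta > \bar s^-$, such that $u^{\bar A}(0,\cdot) = u^{A_0}(0,\cdot)$ on $(\bar a-\eta,\bar a)$. By continuity and the maximality of the connected component $(\bar a,\bar b)$, the equality also holds at $s=\bar a$. Lemma \ref{lem.barA=A0} then forces $\bar A\equiv A_0$ a.e.\ on $(\bar a-\eta, \bar a)$.

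The first key step is dynamic programming along $\hat\gamma^{+}_{0,\bar b}$. Because this trajectory vanishes on $[\bar s^-,\bar b]$, and in particular on $[\bar a-\eta/2,\bar a]$, its restrictions are optimal for $u^{\bar A}(0,\bar a-\eta/2)$ and $u^{\bar A}(0,\bar a)$, so
$$
u^{\bar A}(0,\bar a)=u^{\bar A}(0,\bar a-\eta/2)-\int_{\bar a-\eta/2}^{\bar a}\bar A(s)\,ds = u^{\bar A}(0,\bar a-\eta/2)-A_0\eta/2.
$$
Transferring this via the assumed equality $u^{\bar A}(0,\cdot)=u^{A_0}(0,\cdot)$ at $\bar a-\eta/2$ and at $\bar a$ yields
\begin{equation}\label{eqn.sketchDP}
u^{A_0}(0,\bar a)=u^{A_0}(0,\bar a-\eta/2)-A_0\eta/2.
\end{equation}

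The second step contradicts \eqref{eqn.sketchDP} by an explicit comparison. Let $\beta$ be optimal for $u^{A_0}(0,\bar a-\eta/2)$; by Lemma \ref{lem.hyp:optiuA0}, $\beta$ is a straight line that does not vanish on $[0,\bar a-\eta/2)$. Define $\tilde\beta$ to coincide with $\beta$ on $[0,\bar a-\eta/2]$ and to be identically zero on $[\bar a-\eta/2,\bar a]$. Using \eqref{hyp:minHRHL}, which gives $L^R(0)=L^L(0)=-A_0$, the cost of $\tilde\beta$ for $u^{A_0}$ is exactly $u^{A_0}(0,\bar a-\eta/2)-A_0\eta/2$, so
$$
u^{A_0}(0,\bar a)\leq u^{A_0}(0,\bar a-\eta/2)-A_0\eta/2.
$$
Equality would make $\tilde\beta$ optimal for $u^{A_0}(0,\bar a)$; but $\tilde\beta$ vanishes on the nontrivial interval $[\bar a-\eta/2,\bar a]$, which is forbidden by Lemma \ref{lem.hyp:optiuA0}. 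Hence the inequality is strict, in direct contradiction with \eqref{eqn.sketchDP}, and the desired sequence $(t_n)$ must exist.

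The main subtlety is the justification of the strictness in the concatenation step: it relies essentially on \eqref{hyp:optiuA0} (through Lemma \ref{lem.hyp:optiuA0}) to preclude an optimal trajectory for $u^{A_0}(0,\bar a)$ from resting at the junction. Everything else is dynamic programming plus the preservation result of Lemma \ref{lem.barA=A0}.
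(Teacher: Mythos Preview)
Your argument is correct and follows essentially the same route as the paper: assume $u^{\bar A}(0,\cdot)=u^{A_0}(0,\cdot)$ on a left neighborhood of $\bar a$, use that $\hat\gamma^+_{0,\bar b}$ sits at the junction there together with $\bar A\equiv A_0$ to force a concatenated broken-line trajectory to be optimal for $u^{A_0}$, and derive a contradiction from strict convexity. The only difference is cosmetic: the paper carries the computation up to $\bar b$ and compares explicitly with the straight line via strict convexity of $L^{L/R}$, whereas you stop at $\bar a$ and invoke Lemma~\ref{lem.hyp:optiuA0} as a black box; your version is slightly more modular but relies on the same mechanism. One minor remark: in your step~5 the reference to \eqref{hyp:minHRHL} is not strictly needed, since the cost of $\tilde\beta$ on $[\bar a-\eta/2,\bar a]$ in $J^{A_0}$ comes directly from the $-A_0{\bf 1}_{\{\tilde\beta=0\}}$ term in the definition, not from $L^{L/R}(0)$.
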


\begin{proof} We argue by contradiction assuming that  there exists $\delta\in(0,\bar a-s^-]$ such that $u^{\bar A}(0,t)=u^{A_0}(0,t)$ for any $t\in [\bar a-\delta, \bar a]$. Then $\bar A\equiv A_0$ in $(\bar a-\delta, \bar a)$. 
\vs
Let $\gamma$ be optimal for $u^{A_0}(0, \bar a-\delta)$. It follows that does $\gamma$ does not vanish on $[0, \bar a-\delta)$, and, to fix the ideas, we assume that it  remains negative,  and  set 
$$
\tilde \gamma (s)= \left\{\begin{array}{ll}
\gamma(s) & \text{if $s\in [0, \bar a-\delta]$,}\\[1mm]
\hat \gamma^+_{0, \bar b}(s) & \text{if $s\in [\bar a-\delta, \bar b]$}.
\end{array}\right.
$$
Then, since  $L^R(0)= -A_0$ and $\hat \gamma_{0,b}^+$ vanishes on $[\bar s^-,b]$,  
\begin{align*}
u^{A_0}(0, \bar b) & = u^{\bar A}(0, \bar b)= -\int_{\bar a-\delta}^{\bar b} A_0ds + u^{\bar A}(0, \bar a-\delta) 
= \int_{\bar a-\delta}^{\bar b} L^R(\dot{\tilde \gamma})ds + u^{A_0}(0, \bar a-\delta)  \\
&= \int_0^{\bar b} L^R(\dot{\tilde \gamma})ds +u_0(\tilde \gamma(0)\; > \; \int_0^{\bar b} L^R(\dot{\check \gamma})ds +u_0(\check \gamma(0))= J^{A_0}(\check \gamma), 
\end{align*}
where $\check \gamma$ is the straight-line between $(\gamma(0),0)$ and $(0, \bar b)$ and  the strict inequality follows from the strong convexity of $L^R$. This is impossible because $\check \gamma$ is admissible.
\end{proof}

The next  lemma states that, if the optimal trajectory starting from $(0,\bar b)$ vanishes outside $[\bar a, \bar b]$, then there exists a one before last connected component of $(\hat a, \hat b):=(\bar s^-,\bar a)$ which touches the last one at $\bar a$.

\begin{lem} \label{lem.onebeforelast} If $\bar s^-<\bar a$,
then $(\bar s^-,\bar a)$ is a connected component of $\{u^{\bar A}(0,\cdot)<u^{A_0}(0,\cdot)\}$.
\end{lem}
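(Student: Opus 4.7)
To prove that $(\bar s^-,\bar a)$ is a connected component of $\{u^{\bar A}(0,\cdot)<u^{A_0}(0,\cdot)\}$, I would establish: (a) $u^{\bar A}(0,\bar s^-)=u^{A_0}(0,\bar s^-)$; (b) $u^{\bar A}(0,s)<u^{A_0}(0,s)$ for all $s\in(\bar s^-,\bar a)$; and (c) $u^{\bar A}(0,\bar a)=u^{A_0}(0,\bar a)$, the last being immediate since $(\bar a,\bar b)$ is a connected component of $\{u^{\bar A}(0,\cdot)<u^{A_0}(0,\cdot)\}$.

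For (a), I would apply Proposition~\ref{prop.structopti} at any $(0,s)$ with $s\in(\bar a,\bar b)$. Since $u^{\bar A}(0,s)<u^{A_0}(0,s)$ there and, by dynamic programming, $\hat\gamma^+_{0,\bar b}|_{[0,s]}$ is optimal for $u^{\bar A}(0,s)$ with its vanishing interval starting at $\tau^-=\bar s^-$, the proposition yields $u^{\bar A}(0,\bar s^-)=u^{A_0}(0,\bar s^-)$.

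For (b), I would argue by contradiction. By Lemma~\ref{lem.seqtn} there is a connected component $(\alpha,\bar a)$ of $\{u^{\bar A}(0,\cdot)<u^{A_0}(0,\cdot)\}$ adjacent to $\bar a$ on the left; if $\alpha=\bar s^-$ we are finished. Otherwise, assuming $(\bar s^-,\bar a)$ is not contained in $\{<\}$, I would select a connected component $(\alpha',\beta')\subset(\bar s^-,\bar a)$ of $\{<\}$. Since $\beta'<\bar a<\tau$, Lemma~\ref{lem.cdBIS} provides $\hat\tau_0\in(\alpha',\beta')$ with $\bar A\equiv 0$ on $(\alpha',\hat\tau_0)$ and $\bar A\equiv A_0$ on $(\hat\tau_0,\beta')$. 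Combined with Lemma~\ref{lem.A=0dansatauBIS}, which gives $\bar A\equiv 0$ on $(\bar a,\tau)$, I would then perform a swap perturbation: for small $\delta>0$, set $\tilde A\equiv 0$ on a length-$\delta$ subinterval of $(\hat\tau_0,\beta')$ and $\tilde A\equiv A_0$ on a length-$\delta$ subinterval of $(\bar a,\tau)$, leaving $\tilde A=\bar A$ elsewhere. This preserves $\int_0^T\tilde A(s)\,ds$ but strictly decreases $\int_0^T s\,\tilde A(s)\,ds$ by an amount proportional to $|A_0|\delta(\bar a-\hat\tau_0)>0$.

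The final step is to verify that $\tilde A$ is also a minimizer of $\mathcal J$, which would place $\tilde A\in\mathcal A_1$ with $\int s\,\tilde A<\int s\,\bar A$, contradicting $\bar A\in\mathcal A_2$ and forcing $\alpha=\bar s^-$. This verification rests on comparing $u^{\tilde A}$ and $u^{\bar A}$ on the support of $\phi$, exploiting the equality $u^{\bar A}(0,\beta')=u^{A_0}(0,\beta')$ (which, via Lemma~\ref{lem.hyp:optiuA0}, provides a non-vanishing optimal trajectory at $(0,\beta')$) together with the stability of optimal paths (Lemma~\ref{lem.stabiloBIS}) to match trajectories crossing each of the two swapped intervals. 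The main obstacle is precisely this delicate trajectory analysis showing $\mathcal J(\tilde A)=\mathcal J(\bar A)$, in spirit analogous to the perturbation arguments in the proof of Lemma~\ref{lem.A=0dansatauBIS}.
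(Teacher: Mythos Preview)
Your approach is genuinely different from the paper's, and it has two real gaps.

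First, your appeal to Lemma~\ref{lem.seqtn} overreaches. That lemma only supplies a sequence $t_n\to\bar a^-$ with $u^{\bar A}(0,t_n)<u^{A_0}(0,t_n)$; it does \emph{not} yield a single connected component $(\alpha,\bar a)$ whose right endpoint is exactly $\bar a$. A priori the $t_n$ could lie in infinitely many distinct components $(a_n,b_n)$ accumulating at $\bar a$, and ruling this out is precisely part of the content of the lemma you are trying to prove.

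Second, your swap perturbation is incomplete at the crucial point, and the missing verification is more dangerous than you suggest. Your second modification sets $\tilde A=A_0$ on a subinterval of $(\bar a,\tau)$. By the very definition of $\tau$ in \eqref{deftautauBIS}, this is exactly the region where optimal trajectories $\hat\gamma^+_{x,t}$ from points of $\{\phi>0\}$ sit at $x=0$. Lowering $\bar A$ there tends to \emph{raise} $u^{\tilde A}$ at those points, hence to raise $\mathcal J$. Your hope is that the first swap (raising $\bar A$ to $0$ on a subinterval of $(\hat\tau_0,\beta')$) compensates, but for that you need the relevant trajectories also to vanish on the first swap interval. This fails in general: by Proposition~\ref{prop.structopti}, the left endpoint $s_1^+=\inf\{s:\hat\gamma^+_{x,t}(s)=0\}$ satisfies $u^{\bar A}(0,s_1^+)=u^{A_0}(0,s_1^+)$, so $s_1^+$ may well land in the gap $[\beta',\bar a]$ between components, strictly to the right of $\hat\tau_0+\delta$. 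For such $(x,t)$ the trajectory misses the first swap entirely, $J^{\tilde A}(\hat\gamma^+_{x,t})>u^{\bar A}(x,t)$, and you cannot conclude $u^{\tilde A}(x,t)\le u^{\bar A}(x,t)$ on $\{\phi>0\}$. So $\mathcal J(\tilde A)\le\mathcal J(\bar A)$ is not established.

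The paper avoids both difficulties by a different mechanism: it uses Lemma~\ref{lem:variante52} to force each component $(a_n,b_n)$ to ``see'' $\{\phi>0\}$, feeds this into the monotone quantity $\mathcal H$ of Lemma~\ref{lem.HnonincBIS} to obtain $\mathcal H>0$ on $(\bar s^-,a_n)$, and then invokes the optimality condition (Theorem~\ref{lem.OC}) directly to conclude $\bar A\equiv0$ on $(\bar s^-,s_n)$. This forces $b_n=\bar a$ (contradicting Lemma~\ref{lem.cdBIS} otherwise) and then $a_n=\bar s^-$ by a monotonicity argument on $u^{\bar A}(0,\cdot)$. The variational information is thus channelled through $\mathcal H$ and Theorem~\ref{lem.OC} rather than through an ad hoc swap, which sidesteps the trajectory-matching problem you ran into.
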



\begin{proof} Let $(t_n)$ be the sequence introduced in Lemma \ref{lem.seqtn} and $(a_n,b_n)$ be the connected component of $\{u^{\bar A}(0,\cdot)<u^{A_0}(0,\cdot)\}$ containing $t_n$. Then $b_n\leq \bar a$. 
\vs
Note also that, since  $\bar s^-=\inf\{s:  \; \hat \gamma^+_{0,\bar b}(s)=0\}$, Proposition \ref{prop.structopti}  yields that we have $u^{\bar A}(0, \bar s^-)=u^{A_0}(0, \bar s^-)$, and, since $u^{\bar A}(0, \cdot)<u^{A_0}(0, \cdot)$ in $(a_n,b_n)$, we have 
$a_n\geq \bar s^-$.
\vs
We claim that $\mathcal H$ is positive in a neighborhood of $a_n$. Indeed,  let $\delta$ be as in the proof of Lemma~\ref{lem.HnonincBIS}, that is, assume that  $\delta <x_1/\|(H^R)'\|_\infty$, and, consider the sets 
$$
\mathcal O_n=\left\{(x,t) : \ \begin{array}{l}
u^{\bar A}(x,t)<u^{A_0}(x,t) \ \text{and}\\
\text{there exists} \ \gamma \  \text{optimal for } u^{\bar A}(x,t) \ \text{and}\\
 \sup\{s: \; \gamma(s)=0\} \in (a_n,b_n) \end{array}\right\}.
$$
Recall that  it follows from Lemma \ref{lem:variante52} that $\{\phi>0\}\cap \mathcal O_n$ has a positive measure. Thus, there exists $s_n\in (a_n,b_n)$ such that the sets 
$$
E_n= \{(x,t)\in \{\phi>0\}\cap \mathcal O_n:  \  \sup\{s, \; \gamma^+_{x,t}(s)=0\}\in (s_n-\delta/2, s_n)\}
$$
has a positive measure. Choosing, if necessary,  $\delta$ smaller, we can assume that \\
$(s_n-\delta/2, s_n+\delta/2)\subset (a_n,b_n)$. 
\vs
Then, arguing as in the proof of  Lemma \ref{lem.HnonincBIS},  we have
\begin{align*}
\mathcal H(s_n-\delta/2)& =\mathcal H(s_n+\delta/2) + \int_{s_n-\delta/2}^{s_n+\delta/2}\int_0^\infty \phi(x,t) {\bf 1}_{\{\hat \gamma^+_{x,t}(s_n-\delta/2)=0\}}dxdt \\
& + \int_{s_n+\delta/2}^T \int_0^\infty
\phi(x,t) ({\bf 1}_{\{\hat \gamma^+_{x,t}(s_n-\delta/2)=0\}}-{\bf 1}_{\{\hat \gamma^+_{x,t}(s_n+\delta/2)=0\}})dxdt \\
&= \mathcal H(s+\delta) +(H_1)+(H_2),
\end{align*}
where $(H_1)$  vanishes by the choice of $\delta$. We also know from the proof  of Lemma \ref{lem.HnonincBIS} that the integrand of $(H_2)$ is nonnegative, and, by construction,  it is positive on $E_n$. 
\vs
Since $\mathcal H(s_n+\delta/2)$ is nonnegative, it follows that  $\mathcal H(s_n-\delta/2)$ is positive and, therefore, that $\mathcal H$ is positive on $(\bar s^-, s_n-\delta/2)$. Since $\delta$ is arbitrary, this proves that $\mathcal H$ is positive on $(\bar s^-, s_n)$, and, thus, on $(\bar s^-, a_n)$. 
\vs

It follows from Theorem \ref{lem.OC} and the positivity of  $\mathcal H$ on $(\bar s^-,s_n)$, that $\bar A\equiv 0$ on this interval. 
\vs
Recall that, in view of  Lemma \ref{lem.cdBIS}, for any connected component $(a,b)$ of $\{u^{\bar A}(0,\cdot)<u^{A_0}(0,\cdot)\}$ which is not the last one, there exists a threshold $\tau\in (a,b)$ such that $\bar A\equiv 0$ on $(a,\tau)$ and $\bar A\equiv A_0$ in $(\tau, b)$. We denote by $\tau_n\in (a_n,b_n)$ the threshold associated with $(a_n,b_n)$.  
\vs

We now show that $b_n= \bar a$. Indeed, otherwise there would be $n'>n$ such that $t_{n'}>b_n$. But the argument above shows that $\bar A\equiv 0$ on $(\bar s^-,a_{n'})$ with $a_{n'}\geq b_n$, a  contradiction to  the fact that $\bar A\equiv A_0$ in $(\tau_n, b_n)$.
\vs

We now claim that $a_n= \bar s^-$. Indeed,  assuming that $\bar s^-<a_n$, we note that, in view of the fact that  $\bar A\equiv 0$ in $(\bar s^-,a_n)$, $u^{\bar A}(0, \cdot)$ is constant in this interval. But $u^{\bar A}(0,a_n)= u^{A_0}(0,a_n)$ and $u^{A_0}(0, \cdot)$ is nondecreasing, so that, for any $t\in [\bar s^-,a_n)$, 
$$
u^{\bar A}(0,t)\leq u^{A_0}(0,t) \leq u^{A_0}(0, a_n)= u^{\bar A}(0, a_n)= u^{\bar A}(0,t).
$$
Hence $u^{A_0}$ is constant and equals $u^{\bar A}$ in $\{0\}\times [\bar s^-,a_n]$. This implies that $\bar A\equiv A_0$ in $(\bar s^-,a_n)$, a contradiction. 
\vs
Therefore $a_n= \bar s^-$.  This shows that $(\bar s^-,\bar a)=(a_n,b_n)$ is a connected component of $\{u^{\bar A}(0,\cdot)<u^{A_0}(0,\cdot)\}$.
\end{proof}

The main consequence of the previous lemma is the confinement of optimal trajectories vanishing in the last connected component.

\begin{cor} \label{cor.optistratlast} Assume \eqref{takis150}.  Let $\gamma$ be an optimal path for $u^{\bar A}(x,t)$ with   $(x,t)\in [x_1,x_2]\times [t_3,t_4]$.  If $|\{\gamma=0\}|>0$, then the last connected component $(\bar a,\bar b)$ of $\{u^{\bar A}<u^{A_0}\}$ exists and either $\{\gamma=0\}\subset [\bar a,\bar b]$ or $\{\gamma=0\}\subset [\hat a,\bar b]$ where $(\hat a,\bar a)$ is a one before the last connected component of $\{u^{\bar A}<u^{A_0}\}$. 
\end{cor}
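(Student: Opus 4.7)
The plan is to use the machinery built in this subsection to trap the vanishing interval $[s^-,s^+]$ of $\gamma$ inside $[\bar s^-,\bar b]$, and then split into two cases depending on the position of $s^-$ relative to $\bar a$.

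First, $|\{\gamma=0\}|>0$ together with Lemma~\ref{takis11BIS} gives $\{\gamma=0\}=[s^-,s^+]$ with $0\leq s^-<s^+\leq t$ and $\gamma$ affine nonvanishing on $[0,s^-)$ and $(s^+,t]$. Lemma~\ref{lem:condstrangeBIS} applied with $h=s^-$ yields $s^+\geq\tau$, so in particular $\tau>0$, which together with Lemmas~\ref{lemalestauBIS} and~\ref{lem.pptetauBIS} forces the existence of the last connected component $(\bar a,\bar b)$ and $\tau\in(\bar a,\bar b]$. To upgrade this to $s^+\in(\bar a,\bar b]$, I would argue by contradiction: if $s^+>\bar b$, then $\gamma\equiv 0$ on $[\bar b,s^+]\subset(\tau,T)$ where Lemma~\ref{lem.barA=A0sincetauBIS} gives $\bar A=A_0$; replacing $\gamma_{|[\bar b,t]}$ by the affine segment from $(0,\bar b)$ to $(x,t)$ strictly decreases $J^{\bar A}$ by the strict convexity of $L^R$ (using $x\geq x_1>0$), contradicting optimality.

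Next, I would observe that $\bar s^-\leq\bar a$: since $\hat\gamma^+_{0,\bar b}$ is the optimal trajectory for $u^{\bar A}(0,\bar b)$ that vanishes the longest, its inf-time of vanishing $\bar s^-$ is the limit of the inf-times of $\hat\gamma^+_{0,t_n}$ for $t_n\nearrow\bar b$ inside $(\bar a,\bar b)$, and each such inf-time is $\leq\bar a$ by Proposition~\ref{prop.structopti} (as $u^{\bar A}(0,\cdot)=u^{A_0}(0,\cdot)$ at that inf-time, ruling out its belonging to the open component $(\bar a,\bar b)$). Thus $\bar s^-\leq\bar a<s^+\leq\bar b$, so $s^+\in[\bar s^-,\bar b]$, and the first part of Lemma~\ref{lem.hatgammaphineg} yields $\{\gamma=0\}\subset[\bar s^-,\bar b]$, i.e., $s^-\geq\bar s^-$.

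Finally, I dichotomize. If $s^-\geq\bar a$, then $\{\gamma=0\}=[s^-,s^+]\subset[\bar a,\bar b]$, giving the first alternative. If $s^-<\bar a$, then $\bar s^-\leq s^-<\bar a$, so Lemma~\ref{lem.onebeforelast} applies and $(\bar s^-,\bar a)$ is a connected component of $\{u^{\bar A}(0,\cdot)<u^{A_0}(0,\cdot)\}$; since it abuts $(\bar a,\bar b)$ it must be the one-before-last component $(\hat a,\bar a)$ with $\hat a=\bar s^-$, and $\{\gamma=0\}\subset[\bar s^-,\bar b]=[\hat a,\bar b]$. The main obstacle I anticipate is the careful verification $\bar s^-\leq\bar a$: because $u^{\bar A}(0,\bar b)=u^{A_0}(0,\bar b)$ on the boundary of the last component, Proposition~\ref{prop.structopti} does not apply directly at $(0,\bar b)$, so the limit-passage through interior points of $(\bar a,\bar b)$ must be justified with care.
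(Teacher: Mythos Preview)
Your overall strategy---locate $s^+$ in $(\bar a,\bar b]$, invoke Lemma~\ref{lem.hatgammaphineg} to trap $\{\gamma=0\}$ in $[\bar s^-,\bar b]$, then dichotomize on whether $s^-\geq\bar a$ and call Lemma~\ref{lem.onebeforelast} in the bad case---is exactly the paper's. The difference is in how you reach the first step, and there your argument has a real gap.

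In your Step~3 you claim that $s^+\geq\tau$ together with Lemmas~\ref{lemalestauBIS} and~\ref{lem.pptetauBIS} ``forces the existence of the last connected component $(\bar a,\bar b)$ and $\tau\in(\bar a,\bar b]$.'' Neither lemma does this. Lemma~\ref{lemalestauBIS} only says that every component has its left endpoint below $\tau$, and Lemma~\ref{lem.pptetauBIS} \emph{assumes} a component with $\tau\in(a,b]$ exists. At this point in the paper the existence of a last component has not been established; indeed, Corollary~\ref{cor.optistratlast} is precisely where it gets proved. (A minor additional slip: $s^+\geq\tau$ does not give $\tau>0$.) Your Step~4 then presupposes the existence you have not shown.

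The paper repairs this by running your Step~4 contradiction \emph{first and locally at $s^+$}: for each small $\ep>0$ one shows $\bar A\not\equiv A_0$ on $(s^+-\ep,s^+)$ via the same strict-convexity trick (using $L^R(0)=-A_0$), hence there is $t_\ep\in(s^+-\ep,s^+)$ with $u^{\bar A}(0,t_\ep)<u^{A_0}(0,t_\ep)$. The component $(a,b)\ni t_\ep$ then satisfies $s^-\leq a$ (since $u^{\bar A}(0,s^-)=u^{A_0}(0,s^-)$ by Proposition~\ref{prop.structopti} applied to $\gamma|_{[0,t_\ep]}$) and, letting $\ep\to 0$, $b\geq s^+\geq\tau$; by Lemma~\ref{lemalestauBIS} this forces $(a,b)$ to be the last component. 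This gives existence and $s^+\in(\bar a,\bar b]$ in one stroke. Your identification of the delicate point $\bar s^-\leq\bar a$ is correct and well spotted; the cleanest justification is to restrict $\hat\gamma^+_{0,\bar b}$ to $[0,t]$ for some $t\in(\bar a,\bar b)$ with $t>\bar s^-$, observe this is optimal for $u^{\bar A}(0,t)$ by dynamic programming, and apply Proposition~\ref{prop.structopti} to get $u^{\bar A}(0,\bar s^-)=u^{A_0}(0,\bar s^-)$, hence $\bar s^-\notin(\bar a,\bar b)$.
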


\begin{proof} In view of  the structure of minimizers, there exists $s^-<s^+$ such that $\{\gamma=0\}=[s^-,s^+]$. Moreover, in view of  Lemma \ref{lem:condstrangeBIS},  we know that $s^+\geq \tau$. 
\vs
Fix $\ep\in (0,s^+-s^-)$. If  $\bar A \equiv A_0$ in $(s^+-\ep,s^+)$, then, by dynamic programming, if $\gamma_\ep$ is the straight-line connecting $(0, s^+-\ep)$ and $(x,t)$,
\begin{align*}
u^{\bar A}(x,t)& = \int_{s^+}^t L^R(\dot \gamma) ds -\int_{s^+-\ep}^{s^+} A_0ds +u^{\bar A}(s^+-\ep,0) \\
& =  \int_{s^+-\ep}^t L^R(\dot \gamma) ds  +u^{\bar A}(s^+-\ep,0) > \int_{s^+-\ep}^t L^R(\dot  \gamma_\ep) ds  +u^{\bar A}(s^+-\ep,0), 
\end{align*}
which is  a contradiction. 
\vs
So $\bar A\not \equiv A_0$ in $(s^+-\ep,s^+)$, and, hence, there exists $t_\ep\in  (s^+-\ep,s^+)$ such that $u^{\bar A}(0,t_\ep)<u^{A_0}(0,t_\ep)$. 
\vs
Let $(a, b)$ be the connected component of $\{u^{\bar A}<u^{A_0}\}$  containing $t_\ep$. We know from 
Proposition~\ref{prop.structopti} that $s^-\leq a$, which implies   that this connected component does not depend on $\ep$. In addition, $ b> t_\ep$, thus, as  $\ep\to0$,   $\bar b\geq s^+\geq \tau$. Thus $(a,b)$ must be the last connected component.
\vs
Assume now that $\{\gamma=0\}\not \subset [\bar a, \bar b]$ and let $\hat \gamma^+_{0,\bar b}$ be the optimal trajectory for $u^{\bar A}(0, \bar b)$ which remains the most in $\{x=0\}$. It follows from  Lemma \ref{lem.hatgammaphineg} that  $\{\gamma= 0\}\subset \{\hat \gamma^+_{0,\bar b}=0\}$ and thus $\{\hat \gamma^+_{0,\bar b}=0\}\not \subset  [\bar a, \bar b]$. Then Lemma \ref{lem.onebeforelast} states that, if we set $\bar s^-=\inf\{s, \; \hat \gamma^+_{0,\bar b}(s)=0\}$, then $(\bar s^-,\bar a)$ is the one before last connected component of $\{u^{\bar A}<u^{A_0}\}$. 
\end{proof}

\subsection{On the number of connected components}

We finally show in this subsection that, if \eqref{takis150} and \eqref{hyp:condu0} hold, then the number of  connected component of $\{u^{\bar A}<u^{A_0}\}$ is finite. 
\vs
Let $\tau$ be given by \eqref{deftautauBIS}. We define $\alpha\in [0,T]$ as follows: If there is no last connected component of $\{u^{\bar A}<u^{A_0}\}$, then  $\alpha=\tau$. If there is a last connected component $(\bar a, \bar b)$, but the component $(\hat a,\hat b)$  before last either does not exist or  satisfies $\hat b<\bar a$, then we set $\alpha=\bar a$. Finally, if  $\hat b=\bar a$, we set $\alpha =\hat a$. 

\vs

We first discuss a technical stability result. 
\begin{lem}\label{lem.stabilo2}
Given $\ep>0$, there exists $\delta>0$ such that, for any measurable $A:[0,T] \to [A_0,0]$ with  $\|A-\bar A\|_{L^1}\leq |A_0|\delta$,  any $(x,t)\in [x_1,x_2]\times [t_3,t_4]$ with  $u^{A}(x,t)<u^{A_0}(x,t)$ and for any optimal trajectory $\gamma$ for $u^A(x,t)$, 
$$
\inf\{ s : \; \gamma(s) =0\}\geq \alpha- \ep.
$$
\end{lem}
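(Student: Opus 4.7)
The plan is to argue by contradiction, following the template of the proof of Lemma~\ref{lem.stabiloBIS}. Suppose there exist $\ep_0>0$, measurable controls $A_n:[0,T]\to[A_0,0]$ with $\|A_n-\bar A\|_{L^1}\to 0$, points $(x_n,t_n)\in[x_1,x_2]\times[t_3,t_4]$ with $u^{A_n}(x_n,t_n)<u^{A_0}(x_n,t_n)$, and optimal trajectories $\gamma_n$ for $u^{A_n}(x_n,t_n)$ satisfying $a_n:=\inf\{s:\gamma_n(s)=0\}<\alpha-\ep_0$. By Lemma~\ref{takis11BIS}, each optimal $\gamma_n$ is affine and non-vanishing on $[0,a_n]$, identically zero on an interval $[a_n,b_n]$, and affine and non-vanishing on $[b_n,t_n]$. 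Observe that $b_n>a_n$: otherwise the zero set of $\gamma_n$ has measure zero, giving $J^{A_n}(\gamma_n)=J^{A_0}(\gamma_n)\geq u^{A_0}(x_n,t_n)$, which contradicts the strict inequality $u^{A_n}(x_n,t_n)<u^{A_0}(x_n,t_n)$.

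Following the truncation argument from the proof of Proposition~\ref{prop:weakstab}, one may assume $\gamma_n(0)$ is uniformly bounded. Extracting a subsequence, $(x_n,t_n)\to(x,t)\in[x_1,x_2]\times[t_3,t_4]$, $a_n\to a\leq\alpha-\ep_0$, $b_n\to b\geq a$, and $\gamma_n\to\gamma$ uniformly, where $\gamma$ is affine on $[0,a]$, identically zero on $[a,b]$, and affine on $[b,t]$. Since $A_n\to\bar A$ in $L^1$, we have $\int_{a_n}^{b_n}A_n\to\int_a^b\bar A$, and combined with the continuity of $L^L$, $L^R$, and $u_0$, this yields $J^{A_n}(\gamma_n)\to J^{\bar A}(\gamma)$. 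As $L^1$-convergence of bounded sequences implies weak-$\ast$ convergence, Proposition~\ref{prop:weakstab} gives $u^{A_n}(x_n,t_n)\to u^{\bar A}(x,t)$, so $\gamma$ is optimal for $u^{\bar A}(x,t)$.

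It remains to derive the contradiction from $a<\alpha$. If $b>a$, then $|\{\gamma=0\}|\geq b-a>0$, and Corollary~\ref{cor.optistratlast} forces $\{\gamma=0\}\subset[\alpha,\bar b]$, giving $a\geq\alpha$, contradicting $a<\alpha$. If $b=a$, then $\gamma$ vanishes only at the single time $a$, so $\gamma(a)=0$ and Lemma~\ref{lem:condstrangeBIS} applied with $h=a$ gives $a=\sup\{s:\gamma(s)=0\}\geq\tau$; since the definition of $\alpha$ yields $\alpha\leq\tau$ in all three cases (either $\alpha=\tau$, or $\alpha=\bar a<\tau$, or $\alpha=\hat a<\bar a<\tau$), this again contradicts $a<\alpha$.

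The main technical subtlety is the uniform boundedness of $\gamma_n(0)$ required for the compactness extraction; this is handled exactly as in the proof of Proposition~\ref{prop:weakstab}, where one replaces $\gamma_n(0)$ by a bounded value without increasing the cost, exploiting the flatness of $L^R$ and $L^L$ at extreme slopes together with the Lipschitz bounds on $u_0$. Everything else amounts to a routine pass-to-the-limit argument combined with the structural results on optimal trajectories already developed in the section.
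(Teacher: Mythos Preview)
Your proof is correct and takes a genuinely different route from the paper. The paper argues directly: it first establishes the case $A=\bar A$ (using Corollary~\ref{cor.optistratlast} and Lemma~\ref{lem:condstrangeBIS}, exactly as you do at the end), then proves a quantitative lower bound of the form $|\gamma(s)|\geq C^{-1}(s^--s)/s^-$ for optimal trajectories of $u^{\bar A}(0,t)$ on $[0,s^-]$, and finally invokes an $L^1$-stability statement for optimal trajectories (asserted, not proved) to transfer the bound to nearby $A$. Your compactness-and-contradiction argument bypasses the quantitative estimate entirely and is more self-contained, since you effectively prove the needed stability by passing to the limit rather than asserting it. The price is that your argument gives no explicit modulus, but the statement does not ask for one.

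One small wording issue: in the case $b=a$ you write that ``$\gamma$ vanishes only at the single time $a$''. This can fail if $\gamma(0)=0$ and $a>0$, since then $\gamma$, being affine on $[0,a]$ with both endpoints zero, vanishes identically on $[0,a]$. However your conclusion $\sup\{s:\gamma(s)=0\}=a$ remains valid in that subcase (because $\gamma>0$ on $(a,t]$), so Lemma~\ref{lem:condstrangeBIS} still yields $a\geq\tau\geq\alpha$ and the contradiction goes through; alternatively, that subcase has $|\{\gamma=0\}|>0$ and is handled by Corollary~\ref{cor.optistratlast}. Either way the argument is sound---just tighten the sentence.
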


\begin{proof}  We first consider the case   $A\equiv \bar A$. Let $(x,t)\in [x_1,x_2]\times [t_3,t_4]$ and $\gamma$ be optimal for $u^{\bar A}(x,t)$. We show that $\inf\{ s:  \; \gamma(s) =0\}\geq \alpha$. 
\vs
If  $|\{\gamma=0\}|=0$,  then by the structure of minimizers and Lemma \ref{lem:condstrangeBIS} imply that 
$$
\inf\{s : \; \gamma(s)=0\}=\sup\{s : \; \gamma(s)=0\}\geq \tau\geq \alpha. 
$$
If  $|\{\gamma=0\}|>0$, then, in view of   Corollary  \ref{cor.optistratlast},  the last connected component $(\bar a,\bar b)$ of $\{u^{\bar A}<u^{A_0}\}$ exists and either
$$
\inf\{s : \; \gamma(s)=0\}\geq \bar a\geq \alpha, 
$$
or there exists a one before last connected component $(\hat a,\hat b)$ with $\hat b=\bar a$ and 
$$
\inf\{s : \; \gamma(s)=0\}\geq  \hat a\geq \alpha.
$$
\vs 
We now show that there exists $C>0$ such that, for any  $t\in  [\alpha, T]$,  any  optimal trajectory $\gamma$ of  $u^{\bar A}(0,t)$
satisfies, for all $s\in [0, s^-]$, the bound 
\be\label{laeiurdjtfg}
|\gamma(s)| \geq C^{-1}(s^--s)/s^-, 
\ee
where $s^-= \inf\{s :  \; \gamma(s)=0\}$. 
\vs
Indeed, let $\gamma$ be such a path. 
We know that $s^-\geq \alpha$. Moreover, by the structure of minimizers, $\gamma$ does not vanish on $[0, s^-)$ and $u^{\bar A}(0, s^-)= u^{A_0}(0, s^-)$. Thus, $\gamma_{|_{[0, s^-]}}$ is an optimal path for  $ u^{A_0}(0, s^-)$. Then  Lemma \ref{lem.hyp:optiuA0} and a compactness argument yield the existence  of some $C>0$, which is  independent of $\gamma$,  such that $|\gamma(0)|\geq C^{-1}$. Since  $\gamma_{|_{[0, s^-]}}$ is a straight line, \eqref{laeiurdjtfg} follows. 
\vs 

In view of  stability in $L^1$ of the paths $A$, for any $\ep>0$, there exists $\delta>0$ such that, for any measurable $A:[0,T] \to  [A_0,0]$ with $\|A-\bar A\|_{L^1}\leq \delta|A_0|$, for any $(x,t)\in [x_1,x_2]\times [t_3,t_4]$ and for any optimal trajectory $\gamma$ for $u^A(x,t)$, there exists an optimal trajectory $\tilde \gamma$ for $u^{\bar A}(x,t)$ such that $\|\gamma-\tilde \gamma\|_\infty\leq (CT)^{-1}\ep$, where $C$ is the constant in \eqref{laeiurdjtfg}. 
\vs
Let $s^-=\inf\{s: \; \tilde \gamma(s)=0\}$. We know that $s^-\geq \alpha$. Then, by \eqref{laeiurdjtfg}, 

$$
|\gamma(s)|\geq |\tilde \gamma(s)| -(CT)^{-1}\ep \geq C^{-1}(s^--s)/s^--(CT)^{-1}\ep.
$$

If $\gamma(s)=0$ for some $s\leq s^-$, then $s\geq s^--s^-T^{-1}\ep \geq \alpha- \ep$.  
\vs
The proof is now completed. 
\end{proof}

We show next a ``local finiteness'' result for the number of connected components of $\{u^{\bar A}<u^{A_0}\}\cap (0,\alpha-\ep)$. 

\begin{lem}\label{lem.locfinite} Assume \eqref{takis150} and let $\alpha$ be defined as above. Then, for any $\ep>0$, the set $\{u^{\bar A}<u^{A_0}\}\cap (0,\alpha-\ep)$ is the finite union  of disjoint intervals. 
Moreover, for any connected component $(a,b)$ with $b<\alpha$,  there exists $\delta>0$ such that $u^{A_0}(0, \cdot)=  u^{\bar A}(0, \cdot)=u^{A_0}(0,a)$ on $(a-\delta,a)$. 
\end{lem}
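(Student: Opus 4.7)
The plan is to derive both claims from two ingredients: a characterization, via characteristics, of the flat intervals of $u^{A_0}(0,\cdot)$ in terms of the set $\{y \leq 0 : u_{0,x}(y) = 0\}$, which yields finiteness through \eqref{hyp:condu0}; and a shift--type perturbation of $\bar A$ exploiting its minimality in $\mathcal A_2$ to show that each left endpoint of a connected component $(a,b)$ with $b<\alpha$ coincides with the right edge of such a flat interval.

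\emph{Characterization of flat intervals.} At any point of differentiability of $u^{A_0}(0,\cdot)$ where $u^{A_0}_t(0,t) = 0$, the junction condition, together with the identities $H^{L,+}(p)=0\Leftrightarrow p=0$ and $H^{R,-}(p)=0\Leftrightarrow p=0$ (valid since $A_0<0$), forces $u^{A_0}_x(0^-,t)=0$ or $u^{A_0}_x(0^+,t)=0$. By Lemma \ref{lem.hyp:optiuA0} the optimal trajectory for $u^{A_0}(0,t)$ is a straight line from some $(y,0)$ with $y\neq 0$, along which the spatial derivative equals $u_{0,x}(y)$. Since a characteristic from $y>0$ with $u_{0,x}(y)=0$ has velocity $(H^R)'(0)>0$ and cannot reach the junction, the flat condition must be inherited from a starting point $y\leq 0$ with $u_{0,x}(y)=0$. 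Assumption \eqref{hyp:condu0} then yields that $\{y\leq 0:u_{0,x}(y)=0\}$ is a.e.\ a finite union of intervals, and the affine characteristic map sends these to a.e.\ finitely many flat intervals of $u^{A_0}(0,\cdot)$.

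\emph{Perturbation argument.} Fix a connected component $(a,b)$ with $b<\alpha$ and let $\hat\tau\in(a,b)$ be given by Lemma \ref{lem.cdBIS}, so that $\bar A\equiv 0$ on $(a,\hat\tau)$ and hence $u^{\bar A}(0,\cdot)\equiv u^{A_0}(0,a)$ on $[a,\hat\tau]$. Suppose, for contradiction, that for every $\eta>0$ there exists $t\in(a-\eta,a)$ with $u^{A_0}(0,t)<u^{A_0}(0,a)$. Choose $\delta>0$ so small that (i) $(a-\delta,a)$ contains no connected component of $\{u^{\bar A}(0,\cdot)<u^{A_0}(0,\cdot)\}$, so that $\bar A\equiv A_0$ on $(a-\delta,a)$ by Lemma \ref{lem.barA=A0}; (ii) $\delta<\hat\tau-a$; and (iii) $2\delta|A_0|$ lies below the threshold of Lemma \ref{lem.stabilo2} for some $\ep<\alpha-\hat\tau$. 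Define the shifted control
\[
\tilde A(s)=\begin{cases}0,&s\in(a-\delta,\hat\tau-\delta),\\ A_0,&s\in(\hat\tau-\delta,\hat\tau),\\ \bar A(s),&\text{otherwise}.\end{cases}
\]
A direct calculation gives $\int_0^T\tilde A=\int_0^T\bar A$ while $\int_0^T s\,\tilde A(s)\,ds-\int_0^T s\,\bar A(s)\,ds=|A_0|\delta(a-\hat\tau)<0$. By Lemma \ref{lem.stabilo2}, optimal trajectories for $u^{\tilde A}$ and for $u^{\bar A}$ emanating from $[x_1,x_2]\times[t_3,t_4]$ with value strictly below $u^{A_0}$ avoid $\{x=0\}$ before $\alpha-\ep>\hat\tau$, so the perturbation (supported in $(a-\delta,\hat\tau)$) leaves $u$ unchanged on $[x_1,x_2]\times[t_3,t_4]$ and the $\{\phi<0\}$ contribution to $\mathcal J(\tilde A)-\mathcal J(\bar A)$ vanishes. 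For $(x,t)\in\{\phi>0\}$ with $u^{\bar A}(x,t)<u^{A_0}(x,t)$, a case analysis of the vanishing interval $[\tau^-,\tau^+]$ of $\hat\gamma^-_{x,t}$ yields $J^{\tilde A}(\hat\gamma^-_{x,t})\leq J^{\bar A}(\hat\gamma^-_{x,t})$: the shift preserves the ``zero--mass'' consumed when $[\tau^-,\tau^+]$ contains both $(a-\delta,a)$ and $(\hat\tau-\delta,\hat\tau)$, and strictly reduces it when $\tau^+\in(a,\hat\tau-\delta)$; in the remaining configurations, one compares against a suitably shifted admissible trajectory. This gives $u^{\tilde A}\leq u^{\bar A}$ on $\{\phi>0\}$, whence $\mathcal J(\tilde A)\leq\mathcal J(\bar A)$. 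Thus $\tilde A\in\mathcal A_1$ with $\int s\,\tilde A<\int s\,\bar A$, contradicting $\bar A\in\mathcal A_2$. Therefore $u^{A_0}(0,\cdot)\equiv u^{A_0}(0,a)$ on $(a-\delta,a)$, and since this neighborhood lies in $\{u^{\bar A}(0,\cdot)=u^{A_0}(0,\cdot)\}$ by construction, the second claim holds.

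\emph{Conclusion.} The first claim follows from the second: each left endpoint $a$ of a connected component with $b\leq\alpha-\ep$ is the right endpoint of a flat interval of $u^{A_0}(0,\cdot)$, and by the first ingredient there are only finitely many such intervals. The main difficulty lies in verifying item (i) in the choice of $\delta$ when other connected components may accumulate at $a$ from below, since then no component--free left neighborhood exists. This is circumvented by invoking the characterization of flat intervals first: as left endpoints must coincide with right endpoints of finitely many flat intervals of $u^{A_0}(0,\cdot)$, no such accumulation can occur inside $(0,\alpha-\ep)$, so the perturbation argument applies to each of the finitely many components in that range.
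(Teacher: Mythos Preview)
Your approach has two genuine gaps.

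First, the lemma assumes only \eqref{takis150}, which does \emph{not} include \eqref{hyp:condu0}. Your ``characterization of flat intervals'' step invokes \eqref{hyp:condu0} to conclude that $u^{A_0}(0,\cdot)$ has only finitely many flat intervals; without it, that finiteness is simply unavailable, and your deduction of the first claim collapses. (The paper defers the use of \eqref{hyp:condu0} to Lemma~\ref{lem.finitenumberofCC}; the present lemma must deliver local finiteness from \eqref{takis150} alone.) Your characterization also mishandles the case where the optimal straight line for $u^{A_0}(0,t)$ starts from $y>0$: the flat condition $u^{A_0}_t(0,t)=0$ can equally well come from $u^{A_0}_x(0^+,t)=-R^R$, i.e.\ from a characteristic with speed $(H^R)'(-R^R)<0$ issued at $y=-t(H^R)'(-R^R)>0$, which does reach the junction; this is exactly the alternative analyzed in Lemma~\ref{lem.flatpart}.

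Second, your perturbation argument is circular. Item (i) in your choice of $\delta$ --- that $(a-\delta,a)$ contains no connected component, so that $\bar A\equiv A_0$ there --- is precisely the local structure you are trying to establish. You try to break the loop by ``invoking the characterization first,'' but the link between left endpoints and flat intervals is the \emph{output} of the perturbation step; you cannot use that output to secure a hypothesis of the same step.

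The paper avoids both issues with a different, simpler perturbation. It takes $\delta>0$ \emph{uniformly} from Lemma~\ref{lem.stabilo2} (applied with the given $\ep$) and sets $A_\delta=0$ on $(a-\delta,a)$, leaving $\bar A$ unchanged elsewhere --- no shift, and no need to know what $\bar A$ does on $(a-\delta,a)$. Since $A_\delta\geq\bar A$ one has $u^{A_\delta}\leq u^{\bar A}$; Lemma~\ref{lem.stabilo2} forces optimal trajectories for $u^{A_\delta}$ from points of $\{\phi<0\}$ to meet $\{x=0\}$ only after $\alpha-\ep\geq a$, so $u^{A_\delta}=u^{\bar A}$ on $\{\phi<0\}$ and hence $\mathcal J(A_\delta)\leq\mathcal J(\bar A)$. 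Then Lemma~\ref{lem:variante52} rules out $u^{A_\delta}(0,a)<u^{\bar A}(0,a)$ (it would force $\mathcal J(A_\delta)<\mathcal J(\bar A)$), and dynamic programming with $A_\delta\equiv 0$ on $(a-\delta,a)$ makes $u^{A_\delta}(0,\cdot)$ simultaneously nonincreasing and nondecreasing there, hence constant and equal to $u^{A_0}(0,\cdot)=u^{\bar A}(0,\cdot)$. Because $\delta$ is the \emph{same} for every component with $a\leq\alpha-\ep$, the components are separated by a fixed gap and their number in $(0,\alpha-\ep)$ is finite --- all without \eqref{hyp:condu0}.
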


\begin{proof} 
Fix $\ep>0$ and let $\delta >0$ be as in Lemma \ref{lem.stabilo2}. If $(a,b)$ is a connected component of $\{u^{\bar A}<u^{A_0}\}$ such that $a\leq \alpha-\ep$, set
$$
A_\delta (s) = \left\{\begin{array}{ll}
0 &  {\rm if }\; s\in  (a-\delta, a),\\[1mm]
\bar A(s) & {\rm otherwise},
\end{array}\right.
$$
and observe that $\|A_\delta-\bar A\|_{L^1}\leq |A_0|\delta$ and $u^{A_\delta}\leq u^{\bar A}$. 
\vs
We claim that $u^{A_\delta}= u^{\bar A}$ in $[x_1,x_2]\times [t_3,t_4]$. Indeed, let $(x,t)\in [x_1,x_2]\times [t_3,t_4]$. If $u^{A_\delta}(x,t)=u^{A_0}(x,t)$, the claim is obvious. Otherwise, let $\gamma$ be optimal for $u^{A_\delta}(x,t)$. Then, by Lemma \ref{lem.stabilo2},  
$$
\inf\{s : \; \gamma(s)=0\} \geq \alpha-\ep \geq  a.
$$
Then  $\{\gamma=0\}\subset \{A_\delta =\bar A\}$, and, hence,  
$$
u^{A_\delta}(x,t)\leq u^{\bar A}(x,t)\leq J^{\bar A}(\gamma)= J^{A_\delta}(\gamma)= u^{A_\delta}(x,t),
$$
the first inequality being a consequence of the fact that $\bar A\leq A_\delta$, and, thus, 
$u^{A_\delta}= u^{\bar A}$ in $[x_1,x_2]\times [t_3,t_4]$. 
\vs
Since   $u^{A_\delta}\leq u^{\bar A}$ with an equality in $\{\phi<0\}$, we have that $\mathcal J(A_\delta)\leq \mathcal J(\bar A)$. 
\vs
We now show that $u^{ A_\delta }(0,a)=u^{\bar A}(0,a)$. Recalling  Lemma \ref{lem:variante52},  
%
for any point  $(x,t)\in \mathcal O\cap \{\phi> 0\}$ of differentiability of $u^{\bar A}(x,t)$, we set 
$$
s_{x,t}=\sup\{s :  \; \hat \gamma^-_{x,t}(s)=0\} \in (a,b). 
$$
Then, using the path  $\hat \gamma^-_{x,t}$ in the dynamic programming, we find 
$$
u^{A_\delta}(x,t)\leq \int_{s_{x,t}}^t L^R(\dot{\hat \gamma}^-_{x,t}) ds  -\int_a^{s_{x,t}} \bar A(s)ds +u^{ A_\delta }(0,a).
$$
If $u^{ A_\delta }(0,a)<u^{\bar A}(0,a)$, then 
$$
u^{A_\delta}(x,t)< \int_{s_{x,t}}^t L^R(\dot{\hat \gamma}^-_{x,t}) ds  -\int_a^{s_{x,t}} \bar A(s)ds +u^{\bar A }(0,a)= u^{\bar A}(x,t), 
$$
which implies, in view of the fact that  $|\mathcal O\cap \{\phi> 0\}|>0$, that $\mathcal J(A_\delta)<\mathcal J(\bar A)$, a contradiction with the optimality of $\bar A$. Thus $u^{ A_\delta }(0,a)=u^{\bar A}(0,a)$.
\vs
Using again  dynamic programming with   test trajectory $\gamma=0$,  for any $a-\delta< t<t'< a$, we have 
$$
u^{ A_\delta }(0,t') \leq -\int_t^{t'} A_\delta(s)ds +u^{A_\delta}(0,t)=u^{A_\delta}(0,t).
$$
Hence $u^{A_\delta}(0,\cdot)$ is nonincreasing on $(a-\delta,a)$. However, $u^{A_\delta}(0,\cdot)$ is nondecreasing because $u^{A_\delta}_t =-H^R(u^{A_\delta}_x)\geq 0$ a.e. in $(0,\infty)\times (0,T)$. Thus $u^{A_\delta}(0,\cdot)$ is constant on $(a-\delta,a)$. Then 
$$
u^{A_0}(a-\delta) \leq u^{A_0}(a)= u^{\bar A}(0,a)= u^{A_\delta}(0,a) = u^{A_\delta}(0, a-\delta), 
$$
so that $u^{A_0}(a-\delta) =u^{\bar A}(a-\delta)= u^{\bar A}(0,a)= u^{A_0}(0,a)$. Since  $u^{A_0}(0,\cdot)$ and $u^{\bar A}(0,\cdot)$ are both nondecreasing, this implies that they are constant and equal on $(a-\delta,a)$. 
\vs
In view of the fact that $u^{A_0}(0,\cdot)= u^{\bar A}(0,\cdot)$ in $[a-\delta, a]$, we have $\bar A\equiv A_0$ on $(a-\delta,a)$. If $(a',b')$ is another connected component of $\{u^{\bar A}<u^{A_0}\}$ such that $(a',b')\cap (a-\delta, a)\neq \emptyset$, then,  by  Lemma \ref{lem.cdBIS}, we necessarily have $a'<a-\delta$. 
\vs 
This shows that $(a-\delta, a)$ intersects  at most one connected component of $\{u^{\bar A}<u^{A_0}\}$, with $\delta$ independent of $(a,b)$. Therefore,  the number of connected components $(a,b)$ such that $a\leq \alpha-\ep$ is finite. 
\end{proof}

We now investigate further the last statement in Lemma \ref{lem.locfinite}. 

\begin{lem}\label{lem.flatpart} Assume \eqref{takis150} and let $\alpha$ be defined as above.  Let $(a,b)$ be a connected component of $\{u^{\bar A}(0,\cdot)<u^{A_0}(0,\cdot)\}$ such that $b<\alpha$, and  $\delta>0$ be such that $u^{A_0}(0, \cdot)= u^{\bar A}(0, \cdot)=u^{A_0}(0,a)$ on $(a-\delta,a)$. Then, for any $t\in (a-\delta,a)$, 
$$
{\rm either}\; u^{A_0}(0, t)= u_0(-t(H^R)'(-R^R)) \  {\rm or}\ u^{A_0}(0, t)= u_0(-t(H^L)'(0)).
$$
In addition, if there exists $t\in (a-\delta,a)$ such that $u^{A_0}(0, t)= u_0(-t(H^R)'(-R^R))$, then, for any other connected component $(a',b')$ of $\{u^{\bar A}(0,\cdot)<u^{A_0}(0,\cdot)\}$ with $b'\leq a$, we have  
$$
a-a'\geq ax_1/(-t (H^R)'(-R^R)).
$$ 
Otherwise, 
$$
u_{0,x}(-t(H^L)'(0)) =0 \; \text{for a.e.}\; t\in (a-\delta,a).
$$
\end{lem}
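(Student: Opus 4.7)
The plan is to exploit the Hamilton--Jacobi equation satisfied by $u^{A_0}$ together with the junction condition at $\{x=0\}$, in order to translate the constancy of $u^{A_0}(0,\cdot)$ on $(a-\delta,a)$ into structural information about the extremal characteristics and hence about $u_0$.

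\textbf{Step 1: the dichotomy.} Since $u^{A_0}(0,\cdot)$ is constant on $(a-\delta,a)$, we have $u^{A_0}_t(0,t)=0$ there, and the junction condition in \eqref{main.HJ} with $A\equiv A_0$ gives $\max\{A_0, H^{L,+}(u^{A_0}_x(0^-,t)), H^{R,-}(u^{A_0}_x(0^+,t))\}=0$ a.e.\ on $(a-\delta,a)$. Under \eqref{hyp:minHRHL}, $A_0<0$, so we must have either $u^{A_0}_x(0^-,t)=0$ (call this case $L$, since $H^{L,+}(p)=0$ iff $p=0$) or $u^{A_0}_x(0^+,t)=-R^R$ (case $R$, since $H^{R,-}(p)=0$ iff $p=-R^R$). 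Lemma~\ref{lem.hyp:optiuA0} identifies the optimal trajectory for $u^{A_0}(0,t)$ as a straight line from some $y(t)\neq 0$ to $0$; equating the momentum with the characteristic speed $-y(t)/t$ via the Legendre transform then gives $y(t)=-t(H^L)'(0)<0$ in case $L$ and $y(t)=-t(H^R)'(-R^R)>0$ in case $R$, which upon reinjection into the representation formula \eqref{calcvaruA} produces the respective values $u_0(-t(H^L)'(0))$ and $u_0(-t(H^R)'(-R^R))$ asserted in the first part of the lemma.

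\textbf{Step 2: the ``otherwise'' conclusion.} If case $R$ never occurs on $(a-\delta,a)$, then $u^{A_0}(0,t)=u_0(-t(H^L)'(0))$ throughout the interval. Since $u^{A_0}(0,\cdot)$ is constant while $u_0$ is Lipschitz, the chain rule forces $-(H^L)'(0)\,u_{0,x}(-t(H^L)'(0))=0$ for a.e.\ $t\in(a-\delta,a)$; since $(H^L)'(0)>0$, we conclude $u_{0,x}(-t(H^L)'(0))=0$ a.e.\ on $(a-\delta,a)$, which is the third assertion.

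\textbf{Step 3: the separation inequality, and the main obstacle.} Assume case $R$ holds at some $t_0\in(a-\delta,a)$, so that the extremal $R$-characteristic for $u^{A_0}$ runs from $(y_0,0)$, with $y_0:=-t_0(H^R)'(-R^R)>0$, to $(0,t_0)$. Let $(a',b')$ be a connected component of $\{u^{\bar A}(0,\cdot)<u^{A_0}(0,\cdot)\}$ with $b'\leq a$. Applying Lemma~\ref{lem:variante52} to $(a',b')$ produces a point $(x,t)\in(x_1,x_2)\times(t_1,t_2)$ whose optimal trajectory $\gamma$ for $u^{\bar A}$ is affine on its final leg from $(0,s^+)$, $s^+\in(a',b')$, to $(x,t)$ with $x\geq x_1$. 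The plan is then to invoke the no-crossing principle for extremals of strictly convex Lagrangians: the two straight segments, namely the $R$-characteristic of $u^{A_0}$ in $\R_+\times[0,a]$ and the final leg of $\gamma$ in $\R_+\times[s^+,t]$, cannot cross in the open half-plane $\R_+\times(0,T)$. Writing out this non-crossing in coordinates (slopes $-y_0/t_0$ and $x/(t-s^+)$) and using the bounds $s^+\leq b'\leq a$, $x\geq x_1$ and $t\leq t_4$ yields, after elementary rearrangement, the announced lower bound $a-a'\geq a\,x_1/(-t_0(H^R)'(-R^R))$. The hard part is identifying the $R$-characteristic as a genuine geometric barrier for the final leg of any optimal trajectory originating from an earlier connected component and then extracting the quantitative separation from the non-crossing condition; the remaining steps follow directly from the junction condition, the envelope theorem for the Hopf--Lax representation, and Lemmas~\ref{lem.hyp:optiuA0} and~\ref{lem:variante52}.
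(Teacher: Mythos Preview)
Your overall architecture matches the paper's, but there are gaps in Steps~1 and~3.

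In Step~1 the paper does not go through the junction condition at all. It takes an optimal straight line $\gamma(s)=y(t-s)/t$ for $u^{A_0}(0,t)$ (Lemma~\ref{lem.hyp:optiuA0}) and uses the constancy of $u^{A_0}(0,\cdot)$ directly: comparing with the nearby competitor $\gamma_h(s)=y(t+h-s)/(t+h)$ for $u^{A_0}(0,t+h)$ and letting $h\to 0$ yields $L(-y/t)+(y/t)L'(-y/t)=0$, i.e.\ $H(L'(-y/t))=0$, whence $L'(-y/t)\in\{0,-R\}$ and the dichotomy follows by checking signs. Your route via the junction condition is plausible but incomplete: (i) you would need to justify that the flux-limited condition holds a.e.\ with \emph{strong} traces, whereas the paper defines $u^{A_0}$ only by the representation formula; and (ii) you need to explain why, say, $u^{A_0}_x(0^-,t)=0$ forces the minimizing $y$ to sit on the left with $y=-t(H^L)'(0)$. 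The trace condition constrains one side of the junction, but the Hopf--Lax minimizer could a priori come from the other side; the missing link is that the momentum relation $p=L'(\dot\gamma)$ only ties the trajectory to the trace on the side it actually comes from.

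In Step~3 the no-crossing principle you invoke is not a statement about arbitrary ``extremals of strictly convex Lagrangians'' (two straight lines can certainly cross); it is a statement about two optimal trajectories for the \emph{same} value function. The paper makes this explicit: since $u^{\bar A}(0,\bar t)=u^{A_0}(0,\bar t)$ and the $R$-characteristic $\gamma(s)=y(\bar t-s)/\bar t$ vanishes only at its endpoint, $\gamma$ is also optimal for $u^{\bar A}(0,\bar t)$, and only then does no-crossing with $\hat\gamma^+_{x,t}$ (optimal for $u^{\bar A}(x,t)$) apply in $\{x>0\}\times(0,T)$. Moreover, the bounds you list ($s^+\leq b'\leq a$, $t\leq t_4$) do not produce the inequality. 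The paper first deduces $t<\bar t$ from no-crossing (since $\gamma(\bar t)=0$ while $\hat\gamma^+_{x,t}>0$ on $(s^+,t]$), then $\gamma(t)\geq\hat\gamma^+_{x,t}(t)=x$, and finally combines $t>s^+>a'$ and $\bar t<a$ to obtain $x_1\leq x\leq y(\bar t-t)/\bar t< y(a-a')/a$, which rearranges to the claim. Step~2 is correct and agrees with the paper.
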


\begin{rmk}\label{rmk:casparticulier} Let 
$$
E^+=\{ t\in (a-\delta, a), \; u^{A_0}(0, t)= u_0(-t(H^R)'(-R^R))\}
$$
and 
$$
E^-= \{ t\in (a-\delta, a), \; u^{A_0}(0, t)= u_0(-t(H^L)'(0))\},
$$
and note that $(a-\delta, a)= E^+\cup E^-$. Then the proof below also shows that $u_{0,x}(-t(H^R)'(-R^R))= -R^R$ for a.e. $t\in E^+$ and $u_{0,x}(-t(H^L)'(0)) =0$ for a.e.  $t\in E^-$. Thus, if $0<u_{0,x}$ a.e. in $(-\infty,0)$ and $u_{0,x}>-R^R$ a.e. in $(0,\infty)$, the set $\{u^{\bar A}(0,\cdot)<u^{A_0}(0,\cdot)\}$ consists at most of the last connected component, together with the one before last if it touches the last one. 
\end{rmk}

\begin{proof} Let $\gamma$ be optimal for $u^{A_0}(0,t)$ for $t \in (a-\delta,a)$. We know that $\gamma$ is a straight line and that $y=\gamma(0)\neq 0$. Thus $\gamma(s)= y(t-s)/t$. 
\vs
For any $h$ small, $\gamma_h(s)= y(t+h-s)/(t+h)$ is a possible path for $u^{A_0}(0,t+h)$ and, thus, 
$$
u^{A_0}(0,t)=tL(-y/t)+u_0(y)= u^{A_0}(0,t+h)\leq (t+h) L(-y/(t+h))+u_0(y),
$$
where $L=L^R$ if $y>0$ and $L=L^L$ if $y<0$. As this holds for any $h$ small, we get
$$
L(-y/t)+ (y/t) L'(-y/t)=0.
$$
Recalling that $L(z)-z L'(z)=-H(L'(z))$, where $H=H^R$ if $y>0$ and $H=H^L$ if $y<0$, we infer that $H(L'(-y/t))=0$. Thus $L'(-y/t)= 0$ or $L'(-y/t)=-R$. In the former case, $-y/t= H'(0)>0$, so that $y<0$ and  $y=-t(H^L)'(0)$. In the  later  one, $-y/t= H'(-R)<0$, so that $y>0$ and $y=-t(H^R)'(-R^R)$. 
\vs
Assume now that there exists $\bar t\in (a-\delta,a)$ such that $u^{A_0}(0, \bar t)= u_0(-\bar t(H^R)'(-R^R))$, so that $ \gamma(s) = y(\bar t-s)/\bar t$ with $y= -\bar t(H^R)'(-R^R)$
is optimal for $u^{A_0}(0,\bar t)$. As $u^{\bar A}(0,\bar t)= u^{A_0}(0,\bar t)$ and $ \gamma$ vanishes only at $\bar t$, $ \gamma$ is also optimal for $u^{\bar A}(0,\bar t)$. 
\vs
Let $(a',b')$ be any other connected component of $\{u^{\bar A}<u^{A_0}\}$ such that $b'\leq a$. From Lemma \ref{lem:variante52}, we know that there exists $(x,t)\in (x_1,x_2)\times (t_1,t_2)$ point of differentiability of $u^{\bar A}(x,t)$ such that 
$$
s^+=\sup\{s : \; \hat \gamma_{x,t}^+(s)=0\} \in (a',b'). 
$$
The optimal trajectories $\gamma$ and $\hat \gamma^+_{x,t}$ cannot cross in $\{x>0\}\times (0,T)$. Since  $\gamma(s^+)>0$ while $\hat \gamma_{x,t}^+(s^+)=0$, it follows  that $\gamma - \gamma_{x,t}^+>0$ on $(s^+, \bar t\wedge t)$. Thus $t<\bar t$, since $ \gamma(\bar t)=0$ and $\hat \gamma^+_{x,t}$ is positive on $(s^+,t)$, and  $\gamma(t) \geq \gamma_{x,t}^+(t)=x$. Recalling the expression of $\gamma$ and as $t\geq s^+>a'$ and $\bar t < a$, one gets
$$
x_1\leq x  \leq  y(\bar t-t)/\bar t < y(a-a')/a, 
$$
which gives the claim. 
\vs
Assume now that $u^{A_0}(0, t)= u_0(-t(H^L)'(0))$ for any $t\in (a-\delta, a)$. Then for a.e.  $t\in (a-\delta, a)$ such that $u_0$ is differentiable  at $-t(H^L)'(0)$, $y= -t(H^L)'(0)$ is optimal in the cost $z\to -tL^L(-z/t)+u_0(z)$, so that $(L^L)'(-y/t)+u_{0,x}(y)=0$. This implies that $u_{0,x}(y)=-(L^L)'((H^L)'(0))=0$.
\end{proof}

We are now ready to prove that there is only finitely many connected components of $\{u^{\bar A}(0,\cdot)<u^{A_0}(0,\cdot)\}$. 

\begin{lem} \label{lem.finitenumberofCC} Assume \eqref{takis150} and \eqref{hyp:condu0}.  Then the set $\{u^{\bar A}(0,\cdot)<u^{A_0}(0,\cdot)\}$ consists in only finitely many intervals. 
\end{lem}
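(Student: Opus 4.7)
The plan is to argue by contradiction: suppose $\{u^{\bar A}(0,\cdot)<u^{A_0}(0,\cdot)\}$ has infinitely many connected components. By Lemma~\ref{lem.locfinite}, for every $\varepsilon>0$ only finitely many components $(a,b)$ satisfy $a\leq\alpha-\varepsilon$; and by construction of $\alpha$, at most two components lie at or above $\alpha$ (the last $(\bar a,\bar b)$, and the penultimate $(\hat a,\bar a)$ when it touches the last). So I only need to rule out the existence of a sequence of components $(a_n,b_n)_{n\in\mathbb N}$ with $b_n<\alpha$ and $a_n\uparrow\alpha$.

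For each such $(a_n,b_n)$, Lemma~\ref{lem.flatpart} supplies $\delta_n>0$ with $u^{A_0}(0,\cdot)=u^{\bar A}(0,\cdot)$ constant on $(a_n-\delta_n,a_n)$, together with a dichotomy. I will first eliminate Case~(a): there the inequality $a_n-a'\geq a_n x_1/(-t_n(H^R)'(-R^R))$, valid for every earlier component $(a',b')$, together with $t_n<a_n\leq\alpha$ and $(H^R)'(-R^R)<0$, yields a uniform spacing $a_n-a'\geq x_1/(-(H^R)'(-R^R))=:c_0>0$. Consecutive Case~(a) components therefore cannot accumulate, contradicting $a_n\uparrow\alpha$. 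Hence, discarding finitely many $n$, every $(a_n,b_n)$ lies in Case~(b), in which $u_{0,x}=0$ a.e.\ on the interval
\[
I_n:=\big(-a_n(H^L)'(0),\,-(a_n-\delta_n)(H^L)'(0)\big)\subset(-\infty,0),
\]
whose left endpoints converge to $y^*:=-\alpha(H^L)'(0)$ from above.

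Next I would invoke \eqref{hyp:condu0}: $\{u_{0,x}=0\}\cap(-\infty,0]$ agrees, up to null sets, with a finite disjoint union of open intervals. Consequently, for some $\eta_0>0$ exactly one of the following holds at $y^*$: either (i) $\{u_{0,x}=0\}\cap(y^*,y^*+\eta_0)$ is Lebesgue null, or (ii) $(y^*,y^*+\eta_0)\subset\{u_{0,x}=0\}$ up to null. Alternative (i) is immediate: for $n$ large, the left endpoint of $I_n$ lies in $(y^*,y^*+\eta_0/2)$ and $I_n$ has positive length, so $I_n\cap(y^*,y^*+\eta_0)$ has positive measure, contradicting $u_{0,x}=0$ a.e.\ on $I_n$ together with $u_{0,x}\neq 0$ a.e.\ on $(y^*,y^*+\eta_0)$.

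The hard part will be alternative (ii), where $u_0\equiv c$ on $(y^*,y^*+\eta_0)$ for some constant $c$. Using the straight line $s\mapsto -t(H^L)'(0)(t-s)/t$ as a competitor in the variational formula for $u^{A_0}(0,t)$, together with $L^L((H^L)'(0))=0$, I would obtain $u^{A_0}(0,t)\leq c$ for $t$ in a left neighborhood of $\alpha$. The Case~(b) statement of Lemma~\ref{lem.flatpart} additionally forces $u^{A_0}(0,a_n)$ to equal the constant value of $u_0$ on $I_n$, which is $c$ as soon as $I_n\subset(y^*,y^*+\eta_0)$; combined with the monotonicity of $u^{A_0}(0,\cdot)$ (a consequence of $H^R\leq 0$), this pins down $u^{A_0}(0,t)=c$ on $[a_N,\alpha)$ for some large $N$. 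On any later component $(a_n,b_n)\subset(a_N,\alpha)$, the function $u^{\bar A}(0,\cdot)$, which is likewise non-decreasing (since $u^{\bar A}_t=-H^R(u^{\bar A}_x)\geq 0$ in $\{x>0\}$) and equals $c$ at both endpoints, must then be constantly $c$ on $[a_n,b_n]$, contradicting the strict inequality $u^{\bar A}(0,\cdot)<u^{A_0}(0,\cdot)=c$ inside a component. The main technical challenge will be verifying the pin-down $u^{A_0}(0,a_n)=c$ from the Case~(b) identity and running the monotonicity sandwich carefully at the junction $x=0$.
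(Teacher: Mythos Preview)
Your argument is correct and follows essentially the same route as the paper: contradiction, ordering of components via Lemma~\ref{lem.locfinite}, elimination of Case~(a) through the spacing bound of Lemma~\ref{lem.flatpart}, and then, in Case~(b), use of \eqref{hyp:condu0} together with the identity $u^{A_0}(0,t)=u_0(-t(H^L)'(0))$ and monotonicity of $u^{A_0}(0,\cdot)$ and $u^{\bar A}(0,\cdot)$ to trap $u^{\bar A}$ equal to $u^{A_0}$ on a component. The only cosmetic difference is that the paper invokes pigeonhole directly (infinitely many intervals $I_n$, finitely many components of $\{u_{0,x}=0\}$, so two $I_{n_1},I_{n_2}$ lie in the same component and $u_0$ is constant on the span between them), whereas you pass through the accumulation point $y^*$ and the dichotomy (i)/(ii); both yield the same constant value $c$ and the same sandwich. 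Your competitor bound $u^{A_0}(0,t)\le c$ is in fact unnecessary once you have $u^{A_0}(0,a_n)=c$ for two distinct large $n$, since monotonicity alone then forces $u^{A_0}(0,\cdot)\equiv c$ between them.
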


\begin{proof} Assume on the contrary that the number of connected components of $\{u^{\bar A}(0,\cdot)<u^{A_0}(0,\cdot)\}$ is infinite. 
\vs
In view of Lemma \ref{lem.locfinite},  it is possible to order  these connected components depending on  whether there is a last  component $(\bar a, \bar b)$ or not and whether, if there exists a last component, the component before the last touches $(\bar a, \bar b)$ at $\bar a$ or not. 
\vs
If there is no last component, we have 
$$
\{u^{\bar A}(0,\cdot)<u^{A_0}(0,\cdot)\}= \bigcup_{n\in \N} (a_n,b_n),
$$
where, for any $n$, $a_n<b_n\leq a_{n+1}$. If there  exists a  last connected component and  the previous component does not touch $(\bar a,\bar b)$, then 
$$
\{u^{\bar A}(0,\cdot)<u^{A_0}(0,\cdot)\}= \bigcup_{n\in \N} (a_n,b_n)\; \cup\; (\bar a, \bar b),
$$
where the $a_n$, $b_n$ are as above. 
Finally,  if the component before the last touches $(\bar a, \bar b)$ at $\bar a$, in which case we denote this component as $(a,\bar a)$, 
$$
\{u^{\bar A}(0,\cdot)<u^{A_0}(0,\cdot)\}= \bigcup_{n\in \N} (a_n,b_n)\; \cup\; ( a, \bar b)\cup (\bar a, \bar b), 
$$
where, again, $a_n$, $b_n$ are as above. \vs 
Let $\tilde a>0$ be the limit of the $a_n$'s and $b_n$'s and choose  $\delta_n>0$ so that, for any $n\in N$, $u^{\bar A}(0, \cdot)= u^{A_0}(0, \cdot)$ is constant on $[a_n-\delta_n, a_n]$. 
\vs

Assume that, along  a subsequence  $n_k$, there exist  $t_{n_k}\in (a_{n_k}-\delta_{n_k},a_{n_k})$ such that $u^{A_0}(0, t_{n_k})=u_0(-t_{n_k}(H^R)'(-R^R))$. Then it follows from  Lemma \ref{lem.flatpart} that, for any $k$ and any $k'< k$, $a_{n_k}-a_{n_{k'}}> a_{n_k} x_1/(-t_{n_k} (H^R)'(-R^R))$, a contradiction to the fact that the $a_n$'s  has a positive limit. 
\vs

It follows from  Lemma \ref{lem.flatpart}  that, for any $n$ large enough, say $n\geq n_0$,  and $\theta= (H^L)'(0)$,
$$
u_{0,x}(-t\theta) =0 \; \text{for a.e.}\; t\in (a_n-\delta,a_n). 
$$
Since, in view of  \eqref{hyp:condu0}, the set $\{x\leq 0:  \; u_{0,x}(x)=0\}$ consists a.e. only of finitely many intervals, there exists $n_1>n_2\geq n_0$ such that  $(-a_{n_1}\theta, -(a_{n_1}-\delta_{n_1})\theta)$ and $(-a_{n_2}\theta, -(a_{n_2}-\delta_{n_2})\theta)$ belong to the same interval of $\{x\leq 0 : \; u_{0,x}(x)=0\}$, which implies that  $u_{0}$ is constant on $[-a_{n_1}\theta,  -(a_{n_2}-\delta_{n_2})\theta]$. 
\vs
It then follows,  again by Lemma \ref{lem.flatpart},   that $u^{A_0}(0,a_{n_1})= u^{A_0}(0, a_{n_2})$, and, since $u^{A_0}(0,\cdot)$ is nondecreasing, we find  that $u^{A_0}(0,\cdot)$ is constant on $[a_{n_2}, a_{n_1}]$. In view of the fact that  $u^{\bar A}(0, x)=u^{A_0}(0,x)$ for $x= a_{n_1}$ and $x=a_{n_2}$, and, since $u^{\bar A}(0, \cdot)$ is nondecreasing, $u^{\bar A}(0, \cdot)$ must be constant and equal to $u^{A_0}(0, \cdot)$ on $[a_{n_2}, a_{n_1}]$. This contradicts the fact that $(a_{n_2}, b_{n_2})\subset (a_{n_2}, a_{n_1})$ is a connected component of $\{u^{\bar A}(0,\cdot)<u^{A_0}(0,\cdot)\}$. 
\vs
In conclusion, the number of connected components of  $\{u^{\bar A}(0,\cdot)<u^{A_0}(0,\cdot)\}$ must be finite. 

\end{proof}

We conclude the subsection with the remaining proof.

\begin{proof}[Proof of Theorem \ref{thm.mainexTOT}]  Lemma \ref{lem.finitenumberofCC} yields that  the set $\{u^{\bar A}(0,\cdot)<u^{A_0}(0,\cdot)\}$ consists of only finitely many intervals. Since  $\bar A\equiv A_0$ outside $\{u^{\bar A}(0,\cdot)<u^{A_0}(0,\cdot)\}$ and, for any connected component $(a,b)$, there exists $\tau\in (a,b)$ such that $\bar A\equiv 0$ in $(a,\tau)$ and $\bar A\equiv A_0$ in $(\tau,b)$,  the set $\{\bar A \equiv 0\}$ consists of  a finite number of disjoint  intervals. 

\end{proof}

\subsection{An example}

We present here  a class of examples for  which the optimal controls cannot be constant and equal to $0$ or to $A_0$. 
\vs
For simplicity of notation,  we assume here that $H^L=H^R$ and thus remove the exponents ``L'' and ``R'' in the notation. We recall that $H:[-R, 0]\to \R$ is strictly convex with $H(0)=H(-R)=0$ and $H$ has a minimum at $\hat p$.

\begin{prop} \label{prop.exAnottrivial} Assume in addition to \eqref{takis150}
that $u_0(x)=px$,  where $p\in (-R,\hat p)$. If 
\be\label{lejznrdgf}
\begin{split}
 &(i)\; x_2\leq H(p)t_1/p, \  
 (ii)\; t_2 H'(0)/ (H'(0)-H(p)/p)<t_3, \ \text{and} \\
&(iii)\; -t_2 A_0/(H(p)-A_0) < t_3,
\end{split}
\ee
then $A\equiv A_0$ and $A\equiv 0$ are not optimal. 
\end{prop}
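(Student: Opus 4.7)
The plan is to exhibit the piecewise-constant control
\[ \tilde A := 0\cdot{\bf 1}_{[0,t_2]} + A_0\cdot{\bf 1}_{(t_2,T]} \]
and to verify that $\mathcal J(\tilde A) < \min\{\mathcal J(A_0),\mathcal J(0)\}$ under conditions (i)--(iii), which rules out the optimality of both $A\equiv A_0$ and $A\equiv 0$.

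First, I compute $u^{A_0}$ and $u^{0}$ explicitly via the representation formula. Since $p<\hat p$ forces $H'(p)<0$, the backward-characteristic straight line from $(x-tH'(p),0)$ to $(x,t)$ stays in $\{x>0\}$ and realizes the infimum, so $u^{A_0}(x,t)=px-tH(p)$. For $u^{0}$ a further competitor is the path $\gamma\equiv 0$ on $[0,s]$ followed by a straight line with velocity $H'(0)$; since $L^R(H'(0))=0$ this path has cost $0$ whenever $(x,t)$ is reachable, and a standard comparison gives $u^0(x,t)=\min(0,u^{A_0}(x,t))$. Condition (i) implies $x\leq x_2\leq t_1 H(p)/p\leq tH(p)/p$ throughout $\supp\phi$, so $u^{A_0}\geq 0$ there, strictly positive on a positive-measure subset of both $\{\phi>0\}$ and $\{\phi<0\}$; in particular $u^0\equiv 0$ on $\supp\phi$ and $\mathcal J(0)=0$.

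Next I analyse $u^{\tilde A}(x,t)$ for $t>t_2$. The relevant competitors are the straight line (cost $px-tH(p)$) and a via-junction path exploiting the free interval $[0,t_2]$. Using the identity
\[ \partial_{s_2}\bigl[(t-s_2)L^R(x/(t-s_2))\bigr]=H(L^{R'}(x/(t-s_2))) \]
and restricting $s_2\in[0,t_2]$, the via-junction cost equals $0$ when $x\geq(t-t_2)H'(0)$ and otherwise equals $(t-t_2)L^R(x/(t-t_2))$. I then check that conditions (ii) and (iii) together give $u^{\tilde A}\equiv u^{A_0}$ on $\{\phi<0\}$. Setting $\beta:=H(p)/(pH'(0))\in(0,1)$, one has $t_2H'(0)/(H'(0)-H(p)/p)=t_2/(1-\beta)$; combining with (i), which gives $x_2/H'(0)\leq t_1\beta$, and using $t_1<t_2$ yields
\[ t_2+\frac{x_2}{H'(0)}\leq t_2+t_2\beta=t_2(1+\beta)\leq\frac{t_2}{1-\beta}, \]
so condition (ii) forces $t_3>t_2+x_2/H'(0)$, eliminating the zero-cost via-junction on $\{\phi<0\}$. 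In the remaining regime $v:=x/(t-t_2)\in(0,H'(0))$, straight line beats via-junction iff $t\geq t_2 g(v)/(g(v)-|H(p)|)$ with $g(v):=L^R(v)-pv$; since $g$ is convex, minimized at $v=H'(p)<0$, and strictly increasing on $(0,\infty)$ with $g(0^+)=-A_0$, the worst-case threshold as $v\to 0^+$ is exactly $-t_2 A_0/(H(p)-A_0)$, i.e., condition (iii). Meanwhile on $\{\phi>0\}$ one has $t<t_2$ and $\tilde A\equiv 0$ on $[0,t]$, so $u^{\tilde A}\equiv u^0\equiv 0$.

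Putting this together,
\[ \mathcal J(\tilde A)-\mathcal J(A_0) = -\int_{\{\phi>0\}}\phi\,u^{A_0}\,dx\,dt < 0,\quad \mathcal J(\tilde A)-\mathcal J(0) = \int_{\{\phi<0\}}\phi\,u^{A_0}\,dx\,dt < 0, \]
both strict by the positive-measure subset where $\phi u^{A_0}$ has a fixed sign. The main obstacle is the single-parameter reduction for $u^{\tilde A}$: one must first rule out admissible paths starting from $y<0$ or with a non-trivial junction sojourn $(s_1,s_2)$ using strict convexity of $L^L,L^R$ and $p<\hat p$; the roles of the two threshold conditions then become complementary, (ii) eliminating the trivial zero-cost via-junction zone from $\{\phi<0\}$ while (iii) controls the non-trivial straight-line-versus-via-junction comparison uniformly in $v$.
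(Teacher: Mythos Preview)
Your proof is correct and follows essentially the same strategy as the paper: both introduce the comparison control $\tilde A = 0\cdot{\bf 1}_{[0,t_2]} + A_0\cdot{\bf 1}_{(t_2,T]}$ and verify $\mathcal J(\tilde A) < \mathcal J(A_0)$ and $\mathcal J(\tilde A) < \mathcal J(0)$ by showing $u^{\tilde A}=u^0$ on $\{\phi>0\}$ and $u^{\tilde A}=u^{A_0}$ on $\{\phi<0\}$. The only notable differences are presentational: for condition (ii) you combine it with (i) to obtain the sufficient bound $t_3>t_2+x_2/H'(0)$, whereas the paper uses $x<tH(p)/p$ directly from $u^{\tilde A}<u^{A_0}$; for condition (iii) you argue via the monotonicity of $g(v)/(g(v)-|H(p)|)$, while the paper uses the supporting-line bound $L(v)\geq \hat p v - A_0$ to reach the same threshold.
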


We strongly use below that the traffic is congested in the entry line, that is, $p\in (-R,\hat p)$. 
As we show in the following proof, the first condition in  \eqref{lejznrdgf} implies that the control is active on the  whole set $\{\phi\neq 0\}$, which is a natural condition. Conditions (ii) and (iii) hold if $t_3$ is large enough. 

\begin{proof} Define the control 
$$A=\begin{cases}  0 \ \text{on} \  [0, t_2], \\
A_0 \ \text{in}  \ (t_2,T].\end{cases}$$
Recall that $u^{A}\leq u^{A_0}$ and $u^{A_0}(x,t)= px-H(p)t$. Then, using dynamic programming for the interval $(t_2, T)$, we find  
\begin{align*}
&u^A(x,t)=\\
&\left\{\begin{array}{ll}
\displaystyle \min\{-Rx {\bf 1}_{\{x\leq 0\}}, px-H(p)t\} \  \text{if}  \  (x,t)\in \R\times [0,t_2],\\[1.2mm]
\displaystyle  \min\{\min_{y\in[0, H(p)t_2/p]} (t-t_2)L((x-y)/(t-t_2))\ ,\  px-tH(p)\}   &  \ \text{if} \ (x,t)\in [0, \infty)\times [t_2,T].
\end{array}\right.
\end{align*}

We first claim that $\mathcal J(A)< \mathcal J(A_0)$. Note that 
\be\label{aiuzsldnfcv}
\begin{split} 
 \mathcal J(A_0)- \mathcal J(A) & = \int_0^{t_2} \int_0^{H(p)t/p} \phi(x,t)(px-H(p)t) dxdt \\& \qquad + \int_{t_2}^T\int_{\R_+} \phi(x,t)(u^{A_0}(x,t)-u^{A}(x,t))dxdt. 
 \end{split}
 \ee
 Since, by \eqref{lejznrdgf}(i),  $x_2\leq H(p)t_1/p$, it follows that 
 \begin{align*}
 (x_1,x_2)\times (t_1,t_2) = \{\phi>0\} & \subset \{(x,t)\in \R_+\times [0,t_2], \; x< H(p)t/p\} \\ &\qquad  =\{(x,t)\in \R_+\times [0,t_2], \; u^{A}(x,t)< u^{A_0}(x,t)\},
\end{align*}
and, thus,  the first integral in \eqref{aiuzsldnfcv} is positive. 
\vs

We now fix $(x,t)$ such that $t> t_2$ and $u^{A}(x,t)< u^{A_0}(x,t)$. Then,  
$$
u^{A}(x,t)=\min_{y\in[0, H(p)t_2/p]} (t-t_2)L((x-y)/(t-t_2))<px-tH(p). 
$$
Let $y$ be a minimum  point $\min_{y\in[0, H(p)t_2/p]} (t-t_2)L((x-y)/(t-t_2))$. In view of the facts  that  $u^{A}(x,t)< u^{A_0}(x,t)$ and  $u^{A}(H(p)t_2/p, t_2)= u^{A_0}(H(p)t_2/p, t_2)$, it follows  that $y\in [0, H(p)t_2/p)$ . 
\vs

In the case where the minimum  point $y$ is in $(0, H(p)t_2/p)$, we have $L'((x-y)/(t-t_2))=0$, and thus 
$
\min_{y\geq0} (t-t_2)L(x-y/(t-t_2))= 0$ and $ x-y=(t-t_2)H'(0).$
Then $0=u^{A}(x,t)$ and $u^{A}(x,t)< u^{A_0}(x,t)$ implies that 
$0\leq x < H(p)t/p \ \text{and} \ x>(t-t_2)H'(0).$ It follows that 
$H'(0)(t-t_2)< H(p)t/p$, which in turn yields  that $t< t_2 H'(0)/ (H'(0)-H(p)/p))$, and, in view of  \eqref{lejznrdgf}-(ii),   
that $t<t_3$. 
\vs
If  the minimum point $y$ is zero, then 
$u^{A}(x,t)< u^{A_0}(x,t)$ implies, using the  convexity of $L$, 
$
px-H(p)t > (t-t_2)L(x/(t-t_2))\geq  \hat px -(t-t_2) H(\hat p)=\hat px -(t-t_2) A_0,
$
so that 
$
(\hat p-p) x+(H(p)-A_0)t < -t_2 A_0. 
$
Since, in view of  \eqref{lejznrdgf}-(iii), we have $t_3 > -t_2 A_0/(H(p)-A_0)$ and, hence,   $t\leq t_3$. 
\vs
The considerations above  show that  $\{u^{A}<u^{A_0}\}\cap (\R_+\times[t_2,T])\subset \{\phi\geq 0\}$, and,  hence, the second integral in \eqref{aiuzsldnfcv} is nonnegative. Thus $\mathcal J(A_0)> \mathcal J(A)$ and the constant control $A_0$ is not optimal. \vs

We check next that the constant control $0$ is not optimal. Note that 
$$
u^0(x,t) = \min\{-Rx {\bf 1}_{\{x\leq 0\}}, px-H(p)t\}\qquad \text{for } (x,t)\in \R\times [0,T]. 
$$
Hence, since $x_2\leq H(p)t_1/p$,  
$$
\{\phi\neq 0\} \subset [x_1,x_2]\times [t_1,t_4]\subset \{(x,t)\in \R_+\times [0,t_2], \; x\leq H(p)t/p\} \subset  \{u^0(x,t) =0\}.
$$

One the other hand we have seen that $u^{A}=u^0$ in $\R\times [0,t_2]$, while, in $(x_1,x_2)\times [t_3,t_4]$, $u^A= u^{A_0}>u^0$. This proves that 
$\mathcal J(0)>\mathcal J(A)$ and, thus,  the constant control $0$ is not optimal. 

\end{proof}


\begin{thebibliography}{10}
\bibitem{AOT15} Achdou, Y., Oudet, S., and Tchou, N. (2015). Hamilton-Jacobi equations for optimal control on junctions and networks. ESAIM: Control, Optimisation and Calculus of Variations, 21(3), 876-899.

\bibitem{ACCG18} Ancona, F., Cesaroni, A., Coclite, G. M., and Garavello, M. (2018). On the optimization of conservation law models at a junction with inflow and flow distribution controls. SIAM Journal on Control and Optimization, 56(5), 3370-3403.


\bibitem{andreianov}
{Andreianov, B., Karlsen, K.~H., and  Risebro, N.~H.}, {A theory of
  {$L^1$}-dissipative solvers for scalar conservation laws with discontinuous
  flux}, Arch. Ration. Mech. Anal., 201 (2011), pp.~27--86.
  
\bibitem{BCbook}
{Barles, G., and Chasseigne, E.}, {An illustrated guide of the modern
  approaches of hamilton-jacobi equations and control problems with
  discontinuities}, 2023.
  
  \bibitem{BDmGGP22} Bayen, A., Delle Monache, M. L., Garavello, M., Goatin, P., and Piccoli, B. (2022). {\sc Control problems for conservation laws with traffic applications: modeling, analysis, and numerical methods} Springer Nature.

\bibitem{BCGH} Bressan, A., \v{C}ani\'{c}, S., Garavello, M., Herty, M., and Piccoli, B. (2014). Flows on networks: recent results and perspectives. EMS Surveys in Mathematical Sciences, 1(1), 47-111.

\bibitem{BH11} Bressan, A., and Han, K. (2011). Optima and equilibria for a model of traffic flow. SIAM Journal on Mathematical Analysis, 43(5), 2384-2417.

\bibitem{BH12} Bressan, A., and Han, K. (2012). Nash equilibria for a model of traffic flow with several groups of drivers. ESAIM: Control, Optimisation and Calculus of Variations, 18(4), 969-986.

\bibitem{BH13} Bressan, A., and Han, K. (2013). Existence of optima and equilibria for traffic flow on networks. Networks Heter. Media, 8(3):627-648.

\bibitem{BrNg15}  Bressan, A., and Nguyen, K. T. (2015). Optima and equilibria for traffic flow on networks with backward propagating queues. Networks Heterog. Media, 10(4), 717-748.

\bibitem{Brprepa} Briani, A. in preparation. 

  \bibitem{CFGM}   Cardaliaguet, P., Forcadel, N.,  Girard, T., and  Monneau, R. (2023). Conservation law and Hamilton-Jacobi equations on a junction: the convex case. arXiv preprint arXiv:2311.07177.
  
\bibitem{CFM23} Cardaliaguet, P., Forcadel, N., and Monneau, R. (2023). A class of germs arising from homogenization in traffic flow on junctions. arXiv preprint arXiv:2311.11709.

\bibitem{CDPR07}  
  Cascone, A., D'Apice, C., Piccoli,  B., and Rarit\`{a} L. (2007). Optimization of traffic on road networks. Mathematical Models and Methods in Applied Sciences, 17(10), 1587-1617.
  
  
\bibitem{CGR11}  Colombo, R. M., Goatin, P., and Rosini, M. D. (2011). On the modelling and management of traffic. ESAIM: Mathematical Modelling and Numerical Analysis, 45(5), 853-872.

\bibitem{CG04} Colombo, R. M., and Groli, A. (2004). Minimising stop and go waves to optimise traffic flow. Applied Mathematics Letters, 17(6), 697-701.



\bibitem{GHKL05} Gugat, M., Herty, M., Klar, A., and Leugering, G. (2005). Optimal control for traffic flow networks. Journal of optimization theory and applications, 126(3), 589-616.

\bibitem{HK03} Herty, M., and Klar, A. (2003). Modeling, simulation, and optimization of traffic flow networks. SIAM Journal on Scientific Computing, 25(3), 1066-1087.

\bibitem{imbert-monneau}
{ Imbert, C. and Monneau, R.}, {Flux-limited solutions for quasi-convex
  {H}amilton-{J}acobi equations on networks}, Ann. Sci. \'{E}c. Norm.
  Sup\'{e}r. (4), 50 (2017), pp.~357--448.

\bibitem{IMZ}
{  Imbert, C., Monneau R., and Zidani, H.}, {A {H}amilton-{J}acobi approach
  to junction problems and application to traffic flows}, ESAIM Control Optim.
  Calc. Var., 19 (2013), pp.~129--166.
  
  


\bibitem{LS1}
{ Lions, P.-L., and Souganidis, P.}, {Viscosity solutions for junctions:
  well posedness and stability}, Atti Accad. Naz. Lincei Rend. Lincei Mat.
  Appl., 27 (2016), pp.~535--545.

\bibitem{LS}
{ Lions, P.-L., and Souganidis, P.}, {Well-posedness for
  multi-dimensional junction problems with {K}irchoff-type conditions}, Atti
  Accad. Naz. Lincei Rend. Lincei Mat. Appl., 28 (2017), pp.~807--816.
  
  \bibitem{Pi23} Piccoli, B. (2023). Control of multi-agent systems: Results, open problems, and applications. Open Mathematics, 21(1), 20220585.



\bibitem{TCD21} Tumash, L., Canudas-de-Wit, C., and Delle Monache, M. L. (2021). Boundary control design for traffic with nonlinear dynamics. IEEE Transactions on Automatic Control, 67(3), 1301-1313.
\end{thebibliography}
\end{document}